\def\ovv{\overline}
\newcommand{\M}{\mathbf{M}}
\newcommand{\cc}{\boldsymbol{c}}
\newcommand{\ww}{\boldsymbol{c}_{\infty}}
\newcommand{\CC}{\cc}
\renewcommand{\aa}{\boldsymbol{\alpha}}
\newcommand{\bb}{\boldsymbol{\beta}}
\newcommand{\mm}{\boldsymbol{\mu}}
\begin{document}
\title[Convergence to equilibrium for reaction-diffusion systems]
{Explicit exponential convergence to equilibrium\\ for nonlinear reaction-diffusion systems\\ with detailed balance condition}

\author[K. Fellner, B.Q. Tang] {Klemens Fellner, Bao Quoc Tang}

\address{Bao Quoc Tang \hfill\break
Institute of Mathematics and Scientific Computing, University of Graz, Heinrichstrasse 36, 8010 Graz, Austria}
\email{quoc.tang@uni-graz.at} 

\address{Klemens Fellner \hfill\break
Institute of Mathematics and Scientific Computing, University of Graz, Heinrichstrasse 36, 8010 Graz, Austria}
\email{klemens.fellner@uni-graz.at}

\subjclass[2010]{35B35, 35B40, 35K57, 35Q92}
\keywords{Reaction-Diffusion Systems; Exponential Convergence to Equilibrium; Entropy Method; Chemical Reaction Networks; Detailed Balance Condition}

\begin{abstract}
The convergence to equilibrium of mass action reaction-diffusion systems arising from networks of chemical reactions  is studied. The considered reaction networks are assumed to satisfy the detailed balance condition and have no boundary equilibria. We {{propose a general approach based on the so-called entropy method}}, which is able to quantify with explicitly computable rates the decay of an entropy functional in terms of an entropy entropy-dissipation inequality based on the totality of the conservation laws of the system. 

As a consequence follows convergence to the unique detailed balance equilibrium with explicitly computable convergence rates. The general approach is further detailed for two important example systems: a single reversible reaction involving an arbitrary number of chemical substances and a chain of two reversible reactions arising from enzyme reactions.
\end{abstract}

\maketitle
\numberwithin{equation}{section}
\newtheorem{theorem}{Theorem}[section]
\newtheorem{lemma}[theorem]{Lemma}
\newtheorem{proposition}[theorem]{Proposition}
\newtheorem{definition}{Definition}[section]
\newtheorem{remark}{Remark}[section]
\newtheorem{conjecture}{Conjecture}[section]
\newtheorem*{claim}{Claim}

\tableofcontents

\section{Introduction and main results}\label{sec:1}
			
In this paper, we study exponential convergence to equilibrium with explicitly bounded rates and constants for reaction-diffusion systems arising from chemical reaction networks.
\smallskip
			
The considered reaction-diffusion systems describe networks of chemical reactions according to the {{\it law of mass action}} and under the assumption of the {{\it detailed balance condition}}. {{More precisely}}, we consider $I$ chemical substances $\mathcal{C}_1, \ldots, \mathcal{C}_I$ reacting in $R$ reversible reactions of the form
\begin{center}
\begin{tikzpicture}
												 \node (a) {$\alpha_1^r \mathcal{C}_1+ \ldots + \alpha_I^r \mathcal{C}_I$}; \node (b) at (5,0) {$\beta_1^r\mathcal{C}_1+ \ldots + \beta_I^r\mathcal{C}_I$};
												 \draw[arrows=<-]  ([yshift=0.5mm]a.east) -- node [above] {\scalebox{.8}[.8]{$k^r_b$}} ([yshift=0.5mm]b.west) ;
												 \draw[arrows=<-] ([yshift=-0.5mm]b.west) -- node [below] {\scalebox{.8}[.8]{$k^r_f$}} ([yshift=-0.5mm]a.east);
 \end{tikzpicture}
 \end{center}
for $r=1,2,\ldots, R$ with the nonnegative stoichiometric coefficients
$\boldsymbol{\alpha}^r = (\alpha_1^r, \ldots, \alpha_I^r)\in (\{0\}\cup[1,\infty))^{I}$ and $\boldsymbol{\beta}^r = (\beta_1^r, \ldots, \beta_I^r)\in (\{0\}\cup[1,\infty))^{I}$ and  the positive forward and backward reaction rate constants $k^r_f>0$ and $k^r_b>0$. The corresponding reaction-diffusion system for the 
concentration vector $\cc = (c_1, \ldots, c_I):\Omega\times[0,+\infty) \rightarrow [0,+\infty)^{I}$ subject to homogeneous Neumann boundary conditions reads as
\begin{equation}\label{SS}
	\begin{aligned}
&	\frac{\partial}{\partial t}\cc = \mathbb{D}\Delta\cc - \mathbf{R}(\cc), && \text{ in } \Omega\times\mathbb{R}_+,\\
& \nabla\cc\cdot\nu = 0, &&\text{ on } \partial\Omega\times\mathbb{R}_+, \\
& \cc(x,0) = \cc_0(x), &&\text{ on } \Omega,
		\end{aligned}
\end{equation}
where $\Omega\subset \mathbb{R}^{n}$ is a bounded domain with smooth boundary $\partial\Omega$ (e.g. $\partial\Omega\in C^{2+\epsilon}, \epsilon>0$), outward normal unit vector $\nu$ and normalised volume, i.e. 
$$
|\Omega| = 1
$$ 
(this is w.l.o.g. by rescaling the position variable $x\in\Omega$ as $x\to x |\Omega|^{1/n}$). Moreover,  $\mathbb{D} = \mathrm{diag}(d_1, \ldots, d_I)$ is a uniformly positive definite diffusion matrix, i.e. $0<d_{min}\leq d_i \leq d_{max} < +\infty$ for all $i=1,\ldots, I$,  and the reaction vector $\mathbf{R}(\cc)$ 
represents the chemical reactions according to the mass action law, i.e. 
\begin{equation}\label{R}
	\mathbf{R}(\cc) = \sum_{r=1}^{R}(\boldsymbol{\alpha} ^r - \boldsymbol{\beta} ^r)\left(k^r_f\,\cc^{\boldsymbol{\alpha} ^r} - k^r_b\,\cc^{\boldsymbol{\beta} ^r}\right) \quad \text{ with } \quad \cc^{\boldsymbol{\alpha} ^r} = \prod\limits_{i=1}^{I}c_i^{\alpha_i^r} \quad \text{for}\ \ r=1,2,\ldots, R.
\end{equation}
Here, by following e.g. \cite{Vol94}, we observe that the reaction vector $\mathbf{R}(\cc)$ can be written as product of the stoichiometric matrix
\begin{equation*}\label{g4}	
W = \left((\boldsymbol{\beta}^r - \boldsymbol{\alpha}^r)_{r=1,\ldots,R}\right)^{\top }\in \mathbb{R}^{R\times I},
\end{equation*}
which is also called Wegscheider matrix, and the  
reaction rates vector $\mathbf{K}(\cc)$ as modelled according to the mass action law, i.e. 
\begin{equation*}
\mathbf{R}(\cc) = -W^{\top}\mathbf{K}(\cc),
\qquad\text{where}\quad
\mathbf{K}(\cc) = \left(K^r(\cc)\right)_{r=1,\ldots,R}  := \left(k^r_f\,\cc^{\boldsymbol{\alpha} ^r} - k^r_b\,\cc^{\boldsymbol{\beta} ^r}\right)_{r=1,\ldots,R} .
\end{equation*}

The range $\mathrm{rg}(W^{\top})$ is called the {\it stoichiometric subspace} and the above implies that $\mathbf{R}(\cc)\in \mathrm{rg}(W^{\top})$. 
As a consequence, a key structural property of the reaction 
vector $\mathbf{R}(\cc)$ is  
the codim of $W$, which we denote by $m = \mathrm{dim}\;\mathrm{ker}(W)$. 			

If $m>0$, then there exists a (non-unique) matrix $\mathbb{Q}\in \mathbb{R}^{m\times I}$ of zero left-eigenvectors such that 
\begin{equation}\label{Q}
	\mathbb{Q}\,\mathbf{R}(\cc) = 0 \quad \text{ for all } \quad \cc\in \mathbb R^I.
\end{equation}
As a consequence, we have (formally) the following mass conservation laws for \eqref{SS}--\eqref{R} 
\begin{equation}\label{g9}
	\int_{\Omega}\mathbb{Q}\,\cc(t)dx = \int_{\Omega}\mathbb{Q}\,\cc_0\,dx \qquad \text{ or equivalently } \qquad \mathbb{Q}\,\overline{\cc}(t) = \mathbf{M}:= \mathbb{Q}\,\overline{\cc_0} \quad \text{for all}\quad t>0, 
\end{equation}
where $\overline{\cc} = (\overline{c_1}, \ldots, \overline{c_I})$ with $\overline{c_i}(t) = \int_{\Omega}c_i(x,t)dx$ is the spatially averaged concentration vector (recall $|\Omega|=1$) and $\mathbf{M}$ denotes the vector of initial masses, {which can be assumed non-negative, i.e. $\mathbf{M}\in \mathbb{R}^{m}_{\ge0}$ (after changing the sign of the rows of $\mathbb{Q}$, 
for which the corresponding component of $\mathbf{M}$ should be negative).} If $m=0$, then the system \eqref{SS}--\eqref{R} has no conservation law.

Moreover, it is well known that the reaction vector $\mathbf{R}(\cc)$ according to the mass action law, satisfies 
the quasi-positivity condition: If $\mathbf{R}(\cc) = (R_1(\cc), \ldots, R_I(\cc))^{\top}$, then 
\begin{equation*}
\forall  i=1,2,\ldots I: \quad R_i(c_1, \ldots, c_{i-1}, 0, c_{i+1}, \ldots, c_I) \geq 0 \quad \text{ for all } \quad c_1, \ldots, c_{i-1}, c_{i+1}, \ldots, c_I \geq 0. 
\end{equation*}
As a consequence, solutions to \eqref{SS}-\eqref{R}
subject to non-negative initial data $\cc_0\ge0$ 
remain non-negative $\cc(t)\ge0$
for all times $t>0$, see e.g.  \cite{Michel}. 
\medskip

The first main assumption concerning that reactions networks considered in this work is the {\it detailed balance condition}, see e.g. \cite{HJ72,Vol72,Fei79,Vol94}:
A state $\cc^* \in [0,+\infty)^I$ is called a homogeneous equilibrium or shortly an equilibrium of \eqref{SS}--\eqref{R} if and only if
$$
\mathbf{R}(\cc^*) = 0\qquad\text{and}\qquad \mathbb{Q}\,\cc^* = \M.
$$

\begin{itemize}
\item[(\textbf{A1})] System \eqref{SS}--\eqref{R} is assumed to satisfy the \textit{detailed balance condition}, that is there exists an equilibrium $\cc_{\infty}\in (0,+\infty)^I$ such that any forward reaction is balanced with its corresponding backward reaction at this equilibrium, i.e. 
\begin{equation*}
k^r_f\,\cc_{\infty}^{\boldsymbol{\alpha}^r} = k^r_b\,\cc_{\infty}^{\boldsymbol{\beta}^r},\qquad 
 \text{ for all }\ r = 1,2, \ldots, R. 
\end{equation*}
This equilibrium $\cc_{\infty}$ is called a {\it detailed balance equilibrium}.
\end{itemize}
The detailed balance condition allows to rescale the system
\eqref{SS}--\eqref{R} such that we may assume  w.l.o.g.
\begin{equation}\label{kr}
		k^r_f = k^r_b = k^r>0, \qquad \text{ for all } \ r=1,2,\ldots, R.
\end{equation}
This rescaling simplifies the formulation of a second crucial consequence of the detailed balance condition that the logarithmic entropy (or free energy) functional, 
which is the key quantity of our study, 
\begin{equation}\label{entropy}
\mathcal{E}(\cc) = \sum_{i=1}^{I}\int_{\Omega}(c_i\log{c_i} - c_i + 1)\,dx,
\end{equation}
decays monotone in time 
according to the following 
entropy-dissipation functional 
\begin{equation}\label{entropy_dissipation}
\mathcal{D}(\cc) = -\frac{d}{dt}\mathcal{E}(\cc) = \sum_{i=1}^{I}\int_{\Omega}d_i\frac{|\nabla c_i|^2}{c_i}dx + \sum_{r=1}^{R}k^r\int_{\Omega}(\cc^{\boldsymbol{\alpha} ^r} - \cc^{\boldsymbol{\beta} ^r})(\log \cc^{\boldsymbol{\alpha} ^r} - \log \cc^{\boldsymbol{\beta} ^r})\,dx \geq 0.
\end{equation}
\medskip
			
It is well known for detailed balanced reaction networks ({see e.g. \cite{HJ72} for the ODE systems and \cite[Lemma 3.4]{GGH96} for reaction-diffusion systems with homogeneous Neumann boundary conditions}) that for a given positive initial mass vector $\mathbf{M}\in \mathbb{R}^m_{+}$, there exists a unique positive detailed balance equilibrium  $\cc_{\infty}=(c_{1,\infty},\ldots,c_{I,\infty})$ of \eqref{SS}, which is the unique vector of positive constants $\cc_{\infty}>0$ balancing all the reactions
and satisfying the mass conservation laws, i.e.
\begin{equation}\label{positiveequilibrium}
\cc_{\infty}>0\ : \quad \cc_{\infty}^{\boldsymbol{\alpha}^r} = \cc_{\infty}^{\boldsymbol{\beta}^r} \quad \text{for all} \quad r=1,2,\ldots, R
\qquad\text{and}\qquad
\mathbb{Q}\,\cc_{\infty} = \mathbf{M}.
\end{equation}
Note that the existence of a positive detailed balance equilibrium $\cc_{\infty}>0$ can typically also be expected for non-negative initial mass vectors $\mathbf{M}\in \mathbb{R}^{m}_{\ge0}$ with exceptions when $\mathbf{M}=0$.
However, the (chemically meaningless) reversible reaction  $\mathcal{C}_1 \leftrightarrow
2 \mathcal{C}_1 + \mathcal{C}_2$ constitutes an example 
with positive detailed balance equilibrium $\cc_{\infty}=\mathbf{1}$
in the case when the initial mass vector $\mathbf{M}=0$  since $\mathbb{Q}=(1,-1)$ for this system.
\medskip

It is important to remark that besides the unique positive detailed balance equilibrium $\cc_{\infty}>0$, there may also exist additional, so-called boundary equilibria, 
for which $c^*_{i,\infty}=0$ for at least one index $i=1,\ldots,I$. 
See Remark \ref{boundequi} for an example of a system having a boundary equilibrium.

In this paper, we only consider systems with positive detailed balance equilibrium \eqref{positiveequilibrium} and without boundary equilibria. We therefore impose the following second equilibrium assumption:
\begin{itemize}
\item[(\textbf{A2})] The system \eqref{SS}--\eqref{R} features \textit{no boundary equilibrium}, that is \eqref{SS}--\eqref{R} does not possess an equilibrium $\cc^*\in\partial[0,+\infty)^{I}$. 
\end{itemize}

Assumption $(\mathbf{A2})$ is a natural structural condition in order to prove an entropy entropy-dissipation estimate like presented in the following and stated in \eqref{MainEstimate}. In fact, for general systems featuring boundary equilibria, the behaviour near a boundary equilibrium is unclear and boundary equilibria are able to prevent global exponential decay to an asymptotically stable positive detailed balance equilibrium, see e.g. \cite{DFT16}. 
It is also remarked that there exists a large class of systems possessing no boundary equilibria. See e.g. \cite{CF06} for necessary conditions to determine such systems.

\medskip					 

The large time behaviour of solutions to nonlinear reaction-diffusion systems is a highly active research area, which poses many open problems. Classical methods include e.g. linearisation techniques, spectral analysis, invariant regions and Lyapunov stability arguments. 

More recently, the so-called entropy method proved to be a very useful and powerful improvement of classical Lyapunov methods, as it allows, for instance, to show explicit exponential convergence to equilibrium for reaction-diffusion systems. The basic idea of the entropy method consists in studying the large-time asymptotics of a dissipative PDE model by looking for a 
nonnegative convex entropy functional 
$\mathcal E(f)$ and its nonnegative entropy-dissipation functional
$$
		\mathcal D(f)=-\frac{d}{dt} \mathcal E(f(t)) \geq 0
$$ 
along the flow of a PDE model, which is
well-behaved in the following sense: firstly, all states satisfying $\mathcal D(f)=0$ as well as all the involved conservation laws identify a unique entropy-minimising equilibrium $f_{\infty}$, i.e.  
$$
	\mathcal D(f) = 0\quad \text{and }\quad \text{$f$ satisfies all conservation laws}\quad \iff \quad f=f_{\infty},
$$ 
			and secondly, there exists an \emph{entropy entropy-dissipation (EED for short)
			estimate} of the form 
			$$
						\mathcal D(f) \ge \Phi(\mathcal E(f)-\mathcal E(f_{\infty})), \qquad \text{ with } \quad \Phi(x)\ge0, \qquad \Phi(x) = 0 \iff x=0,
			$$ 
for some nonnegative function $\Phi$.  
We remark, that such an inequality can only hold when all the conserved quantities are taken into account.
Moreover, if $\Phi'(0) \neq 0$, a Gronwall argument usually implies exponential convergence toward $f_{\infty}$ in relative entropy $\mathcal E(f)-\mathcal E(f_{\infty})$ with a rate, which can be explicitly estimated. Finally, by applying Csisz\'{a}r-Kullback-Pinsker type inequalities to the relative entropy $\mathcal E(f)-\mathcal E(f_{\infty})$ (recall that $\mathcal E(f)$ is convex), one obtains exponential convergence to equilibrium, for instance, w.r.t. the $L^1$-norm.
			\medskip
			
The entropy method is a fully nonlinear alternative to arguments based on linearisation around the equilibrium and has the advantage of being quite robust with respect to variations and generalisations of the model system. 
This is due to the fact that the entropy method  
relies mainly on functional inequalities which have no direct link to the original PDE model.
	Generalised models typically feature related entropy and entropy-dissipation functionals. Thus, previously established EED estimates may very usefully be re-applied.
			
The entropy method has previously been used for scalar equations: nonlinear diffusion equations (such as fast diffusions \cite{CV,dolbeault}, Landau equation \cite{DVlandauII}),
integral equations (such as the spatially homogeneous 
			Boltzmann equation \cite{toscani_villani1, toscani_villani2, villani_cerc}), 
			kinetic equations (see e.g. \cite{DVinhom1,DVinhom2, fell}), or coagulation-fragmentation models (see e.g. \cite{CDF08,CDF08a}).
			For certain systems of drift-diffusion-reaction equations in semiconductor physics, an entropy entropy-dissipation estimate has been shown in two dimensions indirectly via a compact\-ness-based contradiction argument in \cite{GGH96,GH97}.			
			
The first results for EED estimates for reaction-diffusion systems with explicit rates and constants were established in \cite{DeFe06, DeFe08, DFM08, GZ10, DFIFIP} in particular cases of reversible equations with (at most) 
quadratic nonlinearities. \textcolor{black}{Finally, we refer to e.g. \cite{Gio99,BH,CJ04,CDF08a,BoPie10,CD} for various references where entropy functionals have been importantly used 
in the analysis of general reaction-diffusion systems, for instance in the study of fast-reaction limits. }
\medskip			
			
In this paper, we aim to generalise the entropy method to reaction-diffusion systems with arbitrary mass action law nonlinearities
and, as a consequence, show exponential convergence to equilibrium for \eqref{SS} with explicit bounds on the rates and constants. {{The obtained results extend recent works on the convergence to equilibrium for nonlinear chemical reaction-diffusion systems, see e.g. \cite{MHM, FL, BFL}}}.

Before stating our results, we remark that for general nonlinear reaction-diffusion systems of the form \eqref{SS}--\eqref{R}, the existence of global classical or weak solutions is often an open problem, especially in higher space dimensions and for super-quadratic nonlinearities (see e.g. the survey \cite{Michel} and Remark \ref{existence} for a more detailed discussion).  
This is due to the lack of sufficiently strong a-priori estimates  in order to control nonlinear terms (comparison principles do not hold except for special systems). 

Recently, Fischer \cite{Fi14} proved the global existence of so-called ``renormalised solution" for general reaction-diffusion systems, which dissipate the entropy \eqref{entropy}, and thus provided the existence of global renormalised solutions of system \eqref{SS}--\eqref{R} for arbitrary stoichiometric coefficients. 

\begin{proposition}[Global renormalised solutions to mass action reaction-diffusion systems \cite{Fi14}]\label{renormalised_sol} \hfil\\Let $\Omega$ be a bounded domain in $\mathbb R^n$ with smooth boundary $\partial\Omega$. Assume that the diffusion matrix $\mathbb D$ is positive definite, i.e. $d_i>0$ for all $i=1,\ldots, I$. Let $\cc_0 \in L^1(\Omega)^I$ be nonnegative initial data with $\mathcal E(\cc_0) < +\infty$.

	Then, there exists  a global in time {non-negative} renormalised solution $\cc$ to \eqref{SS}--\eqref{R}, i.e. $0\le c_i \in L^{\infty}_{loc}([0,+\infty); L^1(\Omega))$ and  $\sqrt{c_i} \in L^2_{loc}([0,+\infty); H^1(\Omega))$, and for every smooth function $\xi: (\mathbb R_{+})^I \rightarrow \mathbb R$ with compactly supported derivative $\nabla\xi$ and for every testfunction $\psi\in C^{\infty}(\overline{\Omega}\times [0,+\infty))$ the equation
	\begin{equation}\label{renormalised}
		\begin{aligned}
			\int_{\Omega}\xi(\cc(\cdot, T))\psi(\cdot, T)&\,dx - \int_{\Omega}\xi(\cc_0)\psi(\cdot, 0)\,dx - \int_{0}^{T}\!\!\int_{\Omega}\xi(\cc)\frac{d}{dt}\psi\, dx dt\\
			= &-\sum_{i,j=1}^{I}\int_{0}^{T}\!\!\int_{\Omega}\psi \,\partial_{i}\partial_{j}\xi(\cc)(d_i\nabla c_i)\!\cdot\! \nabla c_j\, dx dt\\
			& - \sum_{i=1}^{I}\int_{0}^{T}\!\!\int_{\Omega}\partial_{i}\xi(\cc)(d_i\nabla c_i)\!\cdot\! \nabla \psi\, dx dt
			 - \sum_{i=1}^{I}\int_{0}^{T}\!\!\int_{\Omega}\partial_i\xi(\cc)R_i(\cc)\psi\, dx dt
		\end{aligned}
	\end{equation}
	holds for almost every $T>0$, with $\mathbf R(\cc) = (R_1(\cc), \ldots, R_I(\cc))$.
	
\end{proposition}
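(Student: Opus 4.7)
The plan is to follow Fischer's approximation scheme \cite{Fi14}. First, I would regularise the reaction vector to eliminate super-quadratic growth: for each parameter $M\in\mathbb N$, replace $\mathbf R(\cc)$ by a globally Lipschitz truncation $\mathbf R^{(M)}(\cc) = \mathbf R(\cc)\,\chi_M(|\cc|)$, where $\chi_M$ is a smooth cutoff equal to $1$ on $[0,M]$ and vanishing on $[2M,\infty)$. Multiplication by a non-negative radial cutoff preserves the quasi-positivity structure, so standard parabolic theory yields global in time non-negative classical solutions $\cc^{(M)}$ of the regularised system. The diffusive part of the entropy-dissipation \eqref{entropy_dissipation} is unchanged by the truncation, while the reactive part is only multiplied by $\chi_M\ge 0$; hence the truncated system still dissipates $\mathcal E$, and in particular the $L^2_{loc}(L^2)$-bound on $|\nabla c_i^{(M)}|/\sqrt{c_i^{(M)}}$ holds uniformly in $M$, together with a uniform entropy bound $\mathcal E(\cc^{(M)}(t))\le \mathcal E(\cc_0)$.

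Second, I would extract from these uniform bounds the compactness needed to pass to the limit $M\to\infty$. The entropy-dissipation controls $\sqrt{c_i^{(M)}}$ in $L^2_{loc}(H^1(\Omega))$, while the entropy itself controls $c_i^{(M)}\log c_i^{(M)}$, and hence yields equi-integrability of $c_i^{(M)}$ in $L^\infty_{loc}(L^1)$ by de la Vall\'ee Poussin. An Aubin--Lions-type argument applied to a bounded nonlinear functional of $\cc^{(M)}$ (for instance $(1+c_i^{(M)})^{-1}$, whose time derivative is easily controlled from the PDE) then produces a subsequence with $c_i^{(M)}\to c_i$ pointwise almost everywhere on $\Omega\times(0,T)$. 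Continuity of $\mathbf R$ gives $\mathbf R(\cc^{(M)})\to \mathbf R(\cc)$ pointwise, and non-negativity together with the regularity $\sqrt{c_i}\in L^2_{loc}(H^1)$ persist in the limit by lower semicontinuity.

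The crux of the argument is the passage to the limit in the weak formulation. Since super-quadratic reaction terms have no a priori $L^p$ control for any $p>1$, the classical weak formulation cannot be closed. The renormalised formulation \eqref{renormalised} is designed precisely to circumvent this obstruction: for $\xi$ with $\nabla\xi$ compactly supported in $(\mathbb R_+)^I$, each right-hand side integrand involves only values of $\cc$ in a bounded region. Consequently $\partial_i\xi(\cc^{(M)})R_i(\cc^{(M)})$ is uniformly bounded, and Vitali's theorem delivers its convergence; moreover, once $M$ exceeds the support of $\nabla\xi$, the truncation has no effect on this integrand. The quadratic-gradient term $\partial_i\partial_j\xi(\cc^{(M)})\,d_i\nabla c_i^{(M)}\cdot\nabla c_j^{(M)}$ is controlled via the identity $\nabla c_i = 2\sqrt{c_i}\,\nabla\sqrt{c_i}$ by the uniform $L^2$-bound on $\nabla\sqrt{c_i^{(M)}}$, with $\sqrt{c_i}$ uniformly bounded on the support of $\partial_i\partial_j\xi$, so that weak convergence of the gradient terms suffices.

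The main obstacle is precisely this last passage to the limit in the reaction terms: without renormalisation no a priori estimate beyond $L^1$ is available for $\mathbf R(\cc^{(M)})$, and in $L^1$ mere almost-everywhere convergence does not imply weak convergence. Renormalisation resolves this by localising the nonlinearity to a compact range of concentrations on which it is uniformly bounded; the price paid is that the limit equation is tested only against $\xi$ with compactly supported $\nabla\xi$, rather than directly, but this class of test functionals is rich enough to characterise the PDE weakly in the renormalised sense stated in the proposition.
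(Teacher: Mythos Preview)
The paper does not provide its own proof of this proposition: it is quoted as a result of Fischer \cite{Fi14}, and the paper only uses it as a black box (together with \cite{Fis16} for the weak entropy--entropy-dissipation inequality). Your outline is a reasonable sketch of Fischer's strategy --- truncate the reaction term, use the preserved entropy structure for uniform a priori bounds, extract compactness, and pass to the limit in the renormalised formulation --- and the identification of the renormalisation as the device that localises the nonlinearity to a bounded concentration range is exactly the point.

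One step in your sketch is under-argued, however: for the quadratic gradient term $\partial_i\partial_j\xi(\cc^{(M)})\,d_i\nabla c_i^{(M)}\cdot\nabla c_j^{(M)}$, writing $\nabla c_i = 2\sqrt{c_i}\,\nabla\sqrt{c_i}$ still leaves you with a \emph{product} of two gradient factors, and weak $L^2$-convergence of each $\nabla\sqrt{c_i^{(M)}}$ does \emph{not} by itself imply convergence of the product. You need either strong $L^2$-convergence of the gradients (which Fischer obtains by a further argument exploiting the structure of the dissipation and an energy-defect identity) or a compensated-compactness mechanism. This is the genuinely delicate technical point in \cite{Fi14}, and your proposal glosses over it; without it the limit passage in the second right-hand side term of \eqref{renormalised} is not justified.
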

Note that the regularity of renormalised solutions is in general insufficient 
to guarantee the $L^1$-integrability of the reaction-terms on the right hand side of \eqref{SS} and in the entropy dissipation \eqref{entropy_dissipation}. However, sufficiently integrable renormalised solutions are weak solutions, see e.g. 
\cite{DFPV,Fi14}.
\medskip

%

{\color{black} In this paper, we apply the entropy method to show that any renormalised solution to a large class of reaction-diffusion systems of the form \eqref{SS} converges exponentially to equilibrium with computable rates.}

\medskip
The key lemma of the entropy method 
is the following EED functional inequality
\begin{equation}\label{MainEstimate}
						\boxed{\mathcal{D}(\cc) \geq \lambda(\mathcal{E}(\cc) - \mathcal{E}(\cc_{\infty}))}
\end{equation}
for all $\cc\in L^1(\Omega; [0,+\infty)^I)$ obeying the mass conservation $\mathbb{Q}\,\overline{\cc} = \mathbf{M}$, and where $\lambda = \lambda(\Omega, \mathbb D, \M, \boldsymbol{\alpha}^r, \boldsymbol{\beta}^r)$ is a constant depending on the domain $\Omega$, the diffusion coefficients $\mathbb D$, the initial mass vector $\M$ and the stoichiometric coefficients.

Let's assume for the moment that the functional inequality \eqref{MainEstimate} is proven. 
\textcolor{black}{The existence of global renormalised solutions to \eqref{SS} follows from \cite{Fi14} provided that the {\it detailed balanced condition } holds, which guarantees the free energy functional \eqref{entropy}. The existence of global renormalised solution was in fact proven for more general, entropy dissipating systems (see \cite{Fi14} for more details). Moreover, in \cite{Fis16}, it was recently shown that any renormalised solution according to Proposition \ref{renormalised_sol} satisfies the following weak entropy entropy-dissipation inequality as a generalised version of \eqref{entropy_dissipation}:}
\begin{equation}\label{weak_entropy}
	\mathcal{E}(\cc(t)) + \int_{t_0}^t\mathcal{D}(\cc(s))ds \leq \mathcal{E}(\cc(t_0)) \quad \text{ for a.a. } 0\le t_0<t.
\end{equation}
Thus, by applying the functional inequality \eqref{MainEstimate} to these renormalised solutions of \eqref{SS}--\eqref{R}, a Gronwall argument (see e.g. \cite{Wil,FL}) yields exponential convergence in relative entropy with the rate $\lambda$, where $\lambda$ is given in \eqref{MainEstimate} and can be explicitly estimated.  
Moreover, by applying a Csisz$\acute{\mathrm{a}}$r-Kullback-Pinsker type inequality (see Section \ref{sec:2}) one also obtains $L^1$-convergence to equilibrium of solutions to \eqref{SS} with the rate $e^{-\lambda t/2 }$.
\medskip

In \cite{MHM}, by using an inspired convexification argument and under the assumption of the detailed balance condition, the authors proved that \eqref{MainEstimate} holds  for systems \eqref{SS}--\eqref{R} for a $\lambda>0$ provided that the detailed balance equilibrium \eqref{positiveequilibrium} is the only equilibrium and that there are no boundary equilibria. 
Moreover, they gave an explicit bound of $\lambda$ in the case of the quadratic reaction $2\mathcal{C}_1 \leftrightharpoons \mathcal{C}_2$. However, because of the non-convex structure of the problem, obtaining explicit estimates on  $\lambda$ via convexification seems difficult in the case of more than two substances, e.g.
for systems like
\begin{equation*}
\alpha \mathcal{C}_1 + \beta \mathcal{C}_2 \leftrightharpoons \gamma \mathcal{C}_3 \qquad \text{ or } \qquad \mathcal{C}_1 + \mathcal{C}_2 \leftrightharpoons \mathcal{C}_3 + \mathcal{C}_4.
\end{equation*}
			
By drawing from various previous ideas in \cite{DeFe08, DFIFIP, FL, BFL}, this paper aims to propose a constructive way to prove {{{\it quantitatively}}} the EED estimate \eqref{MainEstimate} for general mass action law reaction-diffusion systems. The main advantage of our method is that, by extensively using the structure of the mass conservation laws, the proof relies on   elementary inequalities and has the advantage of providing explicit estimates for the convergence rate $\lambda$. 

Another advantage of the here-proposed method is its robustness in the sense that it also applies to (bio-)chemical reaction networks where substances are supported on different compartments. In Subsection \ref{further}, we show that our method directly generalises to a specific example of a volume-surface reaction-diffusion system. Volume-surface reaction-diffusion systems are recently becoming highly relevant models with many applications in cell-biology 
\cite{NGCRSS07,MS11,FNR13,FRT16,BFL} or also crystal growth \cite{KD01}.  
We remark that the method of convexification as presented in \cite{MHM} seems not to apply to such volume-surface reaction-diffusion systems. 
\textcolor{black}{In fact, the convexification method estimates first the non-convex reaction terms inside the entropy-dissipation integral below in terms of a convexified version. Secondly, this integral term
is then further estimated below by Jensen's inequality to obtain a convex function depending on spatial averages the concentrations. For volume-surface reactions, however, such an approach would partially result in 
surface averages of traces of volume-concentrations, which seems not helpful in controlling volume-averages.}
\medskip

The first two main results of this paper will detail the proposed method for two important model systems: a single reversible reaction with arbitrary number of substances of the form
\begin{equation}\label{ReSingle}
\alpha_1\mathcal{A}_1 + \ldots + \alpha_I\mathcal{A}_I \leftrightharpoons \beta_1 \mathcal{B}_1 + \ldots + \beta_J\mathcal{B}_J
\end{equation}
and a chain of two reversible reactions, which generalises 
the Michaels-Menton model for catalytic enzyme kinetics (see e.g. \cite{Mur})
\begin{equation}\label{ReChain}
\mathcal{C}_1 + \mathcal{C}_2 \leftrightharpoons \mathcal{C}_3 \leftrightharpoons \mathcal{C}_4 + \mathcal{C}_5.
\end{equation}

{
Note that for the single reversible reaction \eqref{ReSingle}, it is more convenient and consistent with the literature to change the notation compared to \eqref{SS} by splitting the concentration vector $\boldsymbol{c}$
into a left-hand-side and a right-hand-side-concentration vector, i.e. 
$$
\boldsymbol{c} = (c_1, \ldots, c_I) \quad\to\quad (\boldsymbol{a},\boldsymbol{b}) = (a_1, \ldots, a_I, b_1, \ldots, b_J),
$$
where $I$ denotes now the number of left-hand-side concentrations and $J$  the number of right-hand-side concentrations. This notation allows a clearer presentation of the corresponding system and the proofs.}

\medskip
At first, after assuming (w.l.o.g.) that the forward and backward reaction rate constants are normalised to one, the mass action reaction-diffusion system modelling \eqref{ReSingle} reads as 
\begin{equation}\label{SysSingle}
		\begin{cases}
		\partial_ta_i - d_{a,i}\Delta a_{i} = -\alpha_i(\boldsymbol{a}^{\boldsymbol{\alpha}} - \boldsymbol{b}^{\boldsymbol{\beta}}), &\quad \text{ in } \Omega\times\mathbb{R}_+,\quad i=1,2,\ldots, I, \\
	\partial_tb_j- d_{b,j}\Delta b_{j} = \beta_j(\boldsymbol{a}^{\boldsymbol{\alpha}} - \boldsymbol{b}^{\boldsymbol{\beta}}), & \quad \text{ in } \Omega\times\mathbb{R}_+,\quad j=1,2,\ldots, J,\\
	\nabla a_i\cdot \nu = \nabla b_j\cdot \nu = 0, &\quad \text{ on } \partial\Omega\times\mathbb{R}_+,\quad i=1,\ldots, I,\; j=1,\ldots, J,\\
	\boldsymbol{a}(x,0) = \boldsymbol{a}_0(x), \quad \boldsymbol{b}(x,0) = \boldsymbol{b}_0(x), &\quad \text{ in }\Omega,
	\end{cases}
\end{equation}
where $\boldsymbol{a} = (a_1, \ldots, a_I)$ and $\boldsymbol{b} = (b_1, \ldots, b_J)$ 
denote the two vectors for left- and right-hand side concentrations, $d_{a,i}>0$ and $d_{b,j}>0$ are the positive diffusion coefficients,  
and $\boldsymbol{\alpha} = (\alpha_1, \ldots, \alpha_I)\in [1,\infty)^{I}$ and 
$\boldsymbol{\beta} = (\beta_1, \ldots, \beta_J)\in [1,\infty)^{I}$ are the positive vectors of the stoichiometric 
coefficients assossiated to the single reaction \eqref{ReSingle}.
Recall that  $\boldsymbol{a}^{\boldsymbol{\alpha}}=\prod_{i=1}^{I}a_i^{\alpha_i}$ and 
$\boldsymbol{b}^{\boldsymbol{\beta}}=\prod_{j=1}^{J}b_j^{\beta_j}$.
%

The system \eqref{SysSingle} 
possesses the following $IJ$ mass conservation laws
\begin{equation}\label{MassSingle}
	\frac{\overline{a_i}}{\alpha_i} + \frac{\overline{b_j}}{\beta_j} = M_{i,j},\qquad i=1,\ldots, I, \ \ j=1,\ldots, J,
\end{equation}
from which exactly 
{
$m = I+J-1$ conservation laws are linear independent. That means the matrix $\mathbb{Q}$ in this case has the dimension $\mathbb{Q}\in \mathbb{R }^{(I+J-1)\times (I+J)}$. See Lemma \ref{masssingle} below for an explicit form of $\mathbb{Q}$.}
After choosing and fixing $I+J-1$ linear independent components 
from the $IJ$ {conserved masses $(M_{i,j}) \in \mathbb{R}_{+}^{IJ}$, 
we denote by 
$\mathbf{M} \in \mathbb{R}^{I+J-1}$ the vector of initial masses corresponding to the selected $I+J-1$ coordinates of 
$(M_{i,j})\in \mathbb{R}^{IJ}_{+}$.
Thus, by a given initial mass vector $\mathbf{M}$, we signify that these $I+J-1$ coordinates are given and the remaining coordinates of $(M_{i,j})\in \mathbb{R}^{IJ}_{+}$ are subsequently calculated.}
The unique positive detailed balance equilibrium $(\boldsymbol{a}_{\infty}, \boldsymbol{b}_{\infty})\in \mathbb{R}^{I+J}_+$ of \eqref{SysSingle} is defined by
\begin{equation*}
		\begin{cases}
		\frac{a_{i,\infty}}{\alpha_i} + \frac{b_{j,\infty}}{\beta_j} = M_{i,j} \qquad \forall i=1,2,\ldots, I,\; \forall j=1,2,\ldots, J,\\[2mm]
		\boldsymbol{a}_{\infty}^{\boldsymbol{\alpha}} = \boldsymbol{b}_{\infty}^{\boldsymbol{\beta}}.
		\end{cases}
\end{equation*}

The corresponding entropy  and entropy-dissipation functionals for system \eqref{SysSingle} are
\begin{equation}\label{l1}
			\mathcal{E}(\boldsymbol{a}, \boldsymbol{b}) = \sum_{i=1}^{I}\int_{\Omega}(a_i\log{a_i} - a_i + 1)dx + \sum_{j=1}^{J}\int_{\Omega}(b_j\log{b_j} - b_j + 1)dx
\end{equation}
and 
\begin{equation}\label{l2}
			\mathcal{D}(\boldsymbol{a}, \boldsymbol{b}) = \sum_{i=1}^{I}\int_{\Omega} d_{a,i}\frac{|\nabla a_i|^2}{a_i}dx + \sum_{j=1}^{J}\int_{\Omega} d_{b,j}\frac{|\nabla b_j|^2}{b_j}dx + \int_{\Omega}(\boldsymbol{a}^{\boldsymbol{\alpha}} - \boldsymbol{b}^{\boldsymbol{\beta}})\log{\frac{\boldsymbol{a}^{\boldsymbol{\alpha}}}{\boldsymbol{b}^{\boldsymbol{\beta}}}}dx,
\end{equation}
respectively.

\begin{theorem}[Explicit convergence to equilibrium for a single reversible reaction \eqref{ReSingle}]\label{mainsingle}\hfill\\
Let $\Omega\subset \mathbb R^n$ be a bounded domain with smooth boundary. Assume positive diffusion coefficients $d_{a,i}>0$ and $d_{b,j}>0$ for all $i=1,\ldots, I$ and $j=1,\ldots, J$. Assume stoichiometric coefficients $\alpha_{i}\geq 1$ and $\beta_j\geq 1$ for all $i=1,\ldots, I$ and $j=1,\ldots, J$.


Suppose assumptions $(\mathbf{A1})$ and $(\mathbf{A2})$, i.e. that system \eqref{SysSingle} features a unique positive detailed balance equilibrium $(\boldsymbol{a}_{\infty}, \boldsymbol{b}_{\infty})$ of the form \eqref{positiveequilibrium} and no boundary equilibria. 
Note that for a given positive initial mass vector $\M\in \mathbb{R}^{I+J-1}_+$
corresponding to $I+J-1$ linear independent conservation laws \eqref{MassSingle}, the assumptions $(\mathbf{A1})$ and $(\mathbf{A2})$ are in fact a consequence of Lemma \ref{Equilibrium} and thus satisfied.

\medskip
Then, for any nonnegative functions $(\boldsymbol{a}, \boldsymbol{b}) \in L^1\left(\Omega; [0,\infty)^{I+J}\right)$ satisfying the mass conservation laws \eqref{MassSingle}, the following EED estimate
\begin{equation}\label{EEDsingle}
\mathcal{D}(\boldsymbol{a}, \boldsymbol{b}) \geq \lambda_1(\mathcal{E}(\boldsymbol{a}, \boldsymbol{b}) - \mathcal{E}(\boldsymbol{a}_{\infty}, \boldsymbol{b}_{\infty}))
\end{equation}
holds {{for $\mathcal{E}(\boldsymbol{a}, \boldsymbol{b})$ and $\mathcal{D}(\boldsymbol{a}, \boldsymbol{b})$ defined in \eqref{l1} and \eqref{l2}, respectively}}, and where the constant $\lambda_1 > 0$ can be explicitly estimated in terms of the initial mass vector $\M$, the domain $\Omega$, the stoichiometric coefficients $\boldsymbol{\alpha}, \boldsymbol{\beta}$ and the diffusion coefficients $d_{a,i}$ and $d_{b,j}$.						
\medskip

Furthermore, for any nonnegative initial data $(\boldsymbol{a}_0, \boldsymbol{b}_0)\in L^1(\Omega)^{I+J}$ with finite entropy, i.e. $\mathcal {E}(\boldsymbol{a}_0,\boldsymbol{b}_0) <+\infty$, there exist global non-negative renormalised solutions $(\boldsymbol{a}, \boldsymbol{b})$ of \eqref{SysSingle} in the 
sense of Proposition \ref{renormalised_sol}. Moreover, 
\textcolor{black}{any renormalised solution to \eqref{SysSingle}  
satisfies the mass conservation laws \eqref{MassSingle} (see Proposition \ref{mass_renormalised})
and 
the weak entropy entropy-dissipation law \eqref{weak_entropy}, i.e. (see \cite{Fis16})}
\begin{equation*}
	\mathcal{E}(\boldsymbol{a}(t), \boldsymbol{b}(t)) + \int_{t_0}^t\mathcal{D}(\boldsymbol{a}(s), \boldsymbol{b}(s))ds \leq \mathcal{E}(\boldsymbol{a}(t_0), \boldsymbol{b}(t_0)) \quad \text{ for a.a. } 0\le t_0<t.
\end{equation*}

Consequently, any renormalised solution of \eqref{SysSingle} 
converges exponentially to the detailed balance equilibrium $(\boldsymbol{a}_{\infty}, \boldsymbol{b}_{\infty})$ in $L^1$-norm with the rate $\frac{\lambda_1}{2}$ (as stated in \eqref{EEDsingle}), i.e.
%
\begin{equation*}
	\sum_{i=1}^{I}\|a_i(t) - a_{i,\infty}\|_{L^1(\Omega)}^2 + \sum_{j=1}^{J}\|b_j(t) - 	b_{j,\infty}\|_{L^1(\Omega)}^2
	\leq C_{CKP}^{-1}(\mathcal{E}(\boldsymbol{a}_0, \boldsymbol{b}_0) - \mathcal{E}(\boldsymbol{a}_{\infty}, \boldsymbol{b}_{\infty}))e^{-\lambda_1 t}
\end{equation*}
where $C_{CKP}$ is the constant in a Csisz$\acute{a}$r-Kullback-Pinsker inequality in Lemma \ref{CKP}.
\end{theorem}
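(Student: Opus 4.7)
The existence of renormalised solutions to \eqref{SysSingle}, their verifying the mass conservation laws, and their satisfying the weak entropy--dissipation inequality all follow directly from the cited results of Fischer together with Proposition \ref{mass_renormalised}, so the substantive content rests on the functional inequality \eqref{EEDsingle}. Once \eqref{EEDsingle} is proved, a Gronwall argument applied to \eqref{weak_entropy} together with the Csisz\'ar--Kullback--Pinsker bound of Lemma \ref{CKP} delivers the claimed $L^1$-decay at rate $\lambda_1/2$. I therefore concentrate on sketching \eqref{EEDsingle}.

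My strategy is the \emph{additivity} decomposition of the relative entropy
\begin{equation*}
\mathcal{E}(\boldsymbol{a},\boldsymbol{b})-\mathcal{E}(\boldsymbol{a}_\infty,\boldsymbol{b}_\infty)
= \underbrace{\bigl[\mathcal{E}(\boldsymbol{a},\boldsymbol{b})-\mathcal{E}(\overline{\boldsymbol{a}},\overline{\boldsymbol{b}})\bigr]}_{(\mathrm{I})}
+ \underbrace{\bigl[\mathcal{E}(\overline{\boldsymbol{a}},\overline{\boldsymbol{b}})-\mathcal{E}(\boldsymbol{a}_\infty,\boldsymbol{b}_\infty)\bigr]}_{(\mathrm{II})},
\end{equation*}
in which (I) and (II) are controlled by complementary portions of $\mathcal{D}(\boldsymbol{a},\boldsymbol{b})$. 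Term (I) is a sum of entries $\int_\Omega c\log(c/\bar c)\,dx$, and the logarithmic Sobolev inequality on $\Omega$ (equivalently Poincar\'e applied to $\sqrt c$) yields $\int_\Omega c\log(c/\bar c)\,dx\le C_{\mathrm{LSI}}(\Omega)\int_\Omega |\nabla c|^2/c\,dx$, so reserving a fixed fraction, say one half, of the Fisher-information part of $\mathcal{D}$ suffices; this contributes a factor of order $\min_{i,j}\{d_{a,i},d_{b,j}\}/C_{\mathrm{LSI}}(\Omega)$ to $\lambda_1$.

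Term (II) is the heart of the argument and is the step I expect to be the main obstacle. It is a finite-dimensional object because the $I+J-1$ conservation laws \eqref{MassSingle} confine $(\overline{\boldsymbol{a}},\overline{\boldsymbol{b}})$ to a one-parameter curve through $(\boldsymbol{a}_\infty,\boldsymbol{b}_\infty)$, and it must be dominated by the reaction integral in $\mathcal{D}$ together with the remaining Fisher information. Applying Jensen's inequality to the jointly convex function $(x,y)\mapsto(x-y)\log(x/y)$ first gives
\begin{equation*}
\int_\Omega(\boldsymbol{a}^{\boldsymbol{\alpha}}-\boldsymbol{b}^{\boldsymbol{\beta}})\log\frac{\boldsymbol{a}^{\boldsymbol{\alpha}}}{\boldsymbol{b}^{\boldsymbol{\beta}}}\,dx
\;\ge\; \bigl(\overline{\boldsymbol{a}^{\boldsymbol{\alpha}}}-\overline{\boldsymbol{b}^{\boldsymbol{\beta}}}\bigr)\log\frac{\overline{\boldsymbol{a}^{\boldsymbol{\alpha}}}}{\overline{\boldsymbol{b}^{\boldsymbol{\beta}}}}.
\end{equation*}
The technical heart is then to replace $\overline{\boldsymbol{a}^{\boldsymbol{\alpha}}},\overline{\boldsymbol{b}^{\boldsymbol{\beta}}}$ by the powers of averages $\overline{\boldsymbol{a}}^{\boldsymbol{\alpha}},\overline{\boldsymbol{b}}^{\boldsymbol{\beta}}$, with the discrepancy absorbed by the reserved Fisher information; this passage from averaged products to products of averages, where the assumptions $\alpha_i,\beta_j\ge 1$ and the a-priori $L^1$-bounds on $a_i,b_j$ coming from \eqref{MassSingle} enter crucially, I would handle by a Poincar\'e estimate on $\sqrt{a_i}$ combined with an $L^\infty$-truncation splitting into a bounded regime and a small-measure regime.

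What remains is a purely finite-dimensional inequality of the form
\begin{equation*}
(\overline{\boldsymbol{a}}^{\boldsymbol{\alpha}}-\overline{\boldsymbol{b}}^{\boldsymbol{\beta}})\log\frac{\overline{\boldsymbol{a}}^{\boldsymbol{\alpha}}}{\overline{\boldsymbol{b}}^{\boldsymbol{\beta}}}
\;\ge\; \kappa\bigl(\mathcal{E}(\overline{\boldsymbol{a}},\overline{\boldsymbol{b}})-\mathcal{E}(\boldsymbol{a}_\infty,\boldsymbol{b}_\infty)\bigr)
\end{equation*}
for a quantitative $\kappa>0$. After parametrising the admissible averages by the scalar extent of reaction $\xi$, both sides vanish to the same order at $\xi=0$, and assumption $(\mathbf{A2})$ forces the admissible segment to stay uniformly away from the coordinate hyperplanes, so elementary Taylor estimates near $\xi=0$ together with a direct lower bound away from it produce an explicit $\kappa$ depending only on $\mathbf{M},\boldsymbol{\alpha},\boldsymbol{\beta}$. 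Combining this with the LSI contribution and the Poincar\'e-type estimate on the Jensen gap yields \eqref{EEDsingle} with $\lambda_1>0$ given constructively in terms of the listed data.
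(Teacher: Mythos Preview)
Your overall skeleton matches the paper's: split the relative entropy additively, control term (I) by the logarithmic Sobolev inequality, and reduce term (II) to a finite-dimensional inequality on the averages, with the passage from the integrated reaction term to a reaction term in the averages handled via Poincar\'e plus a domain decomposition (the paper's Lemma~\ref{average}). Where you diverge is in the finite-dimensional core. The paper first passes to square roots via $(x-y)\log(x/y)\ge 4(\sqrt x-\sqrt y)^2$ and then, writing $\sqrt{\overline{a_i}}=\sqrt{a_{i,\infty}}(1+\mu_i)$ and $\sqrt{\overline{b_j}}=\sqrt{b_{j,\infty}}(1+\xi_j)$, observes that the conservation laws $\overline{a_i}/\alpha_i+\overline{b_j}/\beta_j=M_{i,j}$ force \emph{all} the $\mu_i$ to share one sign and \emph{all} the $\xi_j$ the opposite sign; this sign structure alone yields the explicit constant $H_4=1/\max\{I,J\}$ in one line (Lemma~\ref{InterInequal}). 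A separate explicit case analysis (Lemma~\ref{lem:lowerbound_single}) handles the degenerate regime where some $\overline{a_{i_0}}$ or $\overline{b_{j_0}}$ drops below a threshold $\varepsilon^2$, again by a clean dichotomy: either some deviation $\|\delta\|^2$ is large, or the averaged reaction term is bounded below by an explicit function of $\mathbf M,\boldsymbol\alpha,\boldsymbol\beta$.

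Your alternative---parametrise by the extent of reaction $\xi$ and combine Taylor expansion at $\xi=0$ with a ``direct lower bound away from it''---is not wrong in principle, but two points need correcting. First, your claim that assumption $(\mathbf{A2})$ forces the admissible segment to stay uniformly away from the coordinate hyperplanes is false: the conservation laws permit $\overline{a_i}=0$ (with $\overline{b_j}=\beta_j M_{i,j}$), so the segment \emph{does} reach the boundary. What $(\mathbf{A2})$ actually gives you is that the left-hand side of your finite-dimensional inequality does not vanish there---in fact it diverges, since $\overline{\boldsymbol a}^{\boldsymbol\alpha}\to 0$ while $\overline{\boldsymbol b}^{\boldsymbol\beta}$ stays positive---which is what you need, but for a different reason than you state. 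Second, and more seriously, the phrase ``direct lower bound away from $\xi=0$'' is, as written, a compactness argument and not an explicit one; to deliver a \emph{computable} $\kappa$ in the sense the theorem claims you must do something structural. The paper's sign-relation trick, which exploits that every conservation law in \eqref{MassSingle} couples exactly one $a_i$ with one $b_j$, is precisely such a device and is what makes the constant genuinely explicit. Without it, or an equivalent replacement, your $\lambda_1$ is only abstractly positive.
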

\begin{remark}\label{existence}
Theorem \ref{mainsingle} is formulated for renormalised solution, which is the only available general concept of global solutions for nonlinear entropy-dissipating reaction-diffusion systems like \eqref{SS}--\eqref{R}. {\color{black} All renormalised solutions are shown in \cite[Proposition 6]{Fis16} to satisfy a weak entropy entropy-dissipation law, yet not in terms of an equality like \eqref{entropy_dissipation}, but in terms of the inequality \eqref{weak_entropy}. Nevertheless, this is sufficient to obtain exponential convergence to equilibrium via a Gronwall argument, see \cite{Wil,FL}.
\textcolor{black}{
Moreover, Proposition \ref{mass_renormalised} shows that 
all renormalised solutions of \eqref{SysSingle} according to Proposition \ref{renormalised_sol} satisfy the conservation laws \eqref{MassSingle}, which are required to 
identify the detailed balance equilibrium $(\boldsymbol{a}_{\infty}, \boldsymbol{b}_{\infty})$. We refer to Remark \ref{massrenorm} below stating that mass conservation remains an open problem for renormalised solutions of some detailed balance systems.}


For classical and weak solutions of \eqref{SS}--\eqref{R}, however, the weak entropy entropy-dissipation law \eqref{weak_entropy} can typically be verified with an equality sign.}
This was done, for instance, in \cite{DFPV} for weak $(L\, \log L)^2$-solutions of a system with quadratic nonlinearities (see also Theorem \ref{mainChain}).
	More recently, global weak solutions satisfying \eqref{weak_entropy} were shown to exist for  systems of the form \eqref{SS}--\eqref{R} in all space dimensions provided a (dimension- and nonlinearity-dependent)  "closeness'' assumption on the diffusion coefficients, see e.g. 
	\cite{FL}. Imposing a {stronger}  "closeness'' assumption on the diffusion coefficients allows to even show the existence of global classical solutions, see e.g. \cite{FLS16}.
\end{remark}	
\begin{remark}
	Theorem \ref{mainsingle} generalises the previous results of \cite{DeFe06, DeFe08,GZ10,MHM, FL}, where only special cases of system \eqref{SysSingle} were treated.
\end{remark}		
			
As second example \eqref{ReChain}, after assuming ({for the sake of simplicity}) that all the forward and backward reaction rate constants are normalised to one, the reaction-diffusion system modelling \eqref{ReChain} reads (by reverting back to the general notation of system \eqref{SS}) as
\begin{equation}\label{SysChain}
	\begin{cases}
		\partial_tc_1 - d_1\Delta c_1 = -c_1c_2 + c_3, &\text{ in } \Omega\times\mathbb{R}_+, \\
		\partial_tc_2 - d_2\Delta c_2 = -c_1c_2 + c_3, &\text{ in } \Omega\times\mathbb{R}_+, \\
		\partial_tc_3 - d_3\Delta c_3 = c_1c_2 + c_4c_5 - 2c_3, \quad&\text{ in } \Omega\times\mathbb{R}_+,\\
		\partial_tc_4 - d_4\Delta c_4 = -c_4c_5 + c_3, &\text{ in } \Omega\times\mathbb{R}_+,\\
		\partial_tc_5 - d_5\Delta c_5 = -c_4c_5 + c_3, &\text{ in } \Omega\times\mathbb{R}_+,\\
		\nabla c_i\cdot \nu =  0, \qquad\qquad i=1,2,\ldots, 5,\quad &\text{ on } \partial\Omega\times\mathbb{R}_+,\\
		c_i(x,0) = c_{i,0}(x), &\text{ in } \Omega,
	\end{cases}
\end{equation}
where $d_i >0$, $i=1,\ldots 5$, are positive diffusion coefficients.
The four mass conservation laws of \eqref{SysChain} are
\begin{equation}\label{MassChain}
		\overline{c_i} + \overline{c_3} + \overline{c_j} = M_{i,j}, \qquad \forall i\in \{1,2\} \quad\text{ and }\quad \forall j\in \{4,5\}
\end{equation}
and among these there are 
{
$m = 3$ linear independent conservation laws, thus $\mathbb{Q}\in \mathbb{R}^{3\times 5}$.} 
In the following, we denote by $\boldsymbol{c} = (c_1, \ldots, c_5)$ the concentration vector and by 
$(M_{i,j}) = (M_{1,4}, M_{1,5}, M_{2,4}, M_{2,5})\in \mathbb{R}^4$  the {vector of conserved masses}. 
Note that the initial mass vector $\M$ is determined by any three coordinates of $(M_{i,j})\in \mathbb{R}^4_{+}$ corresponding to three linear independent 
conservation laws and by a given vector $(M_{i,j})\in \mathbb{R}^4_{+}$ we mean that these three coordinates are given and the remaining coordinate is subsequently calculated. 
The unique positive detailed balance equilibrium 
$\boldsymbol{c}_{\infty} = (c_{1,\infty}, \ldots, c_{5,\infty})\in \mathbb{R}^{5}$ to \eqref{SysChain} is defined  by
\begin{equation}\label{PosEquiChain}
\begin{cases}
									c_{i,\infty} + c_{3,\infty} + c_{j,\infty} = M_{i,j}, \qquad \forall i\in \{1,2\} \quad \text{ and }\quad \forall j\in \{4,5\},\\
									c_{1,\infty}c_{2,\infty} = c_{3,\infty},\\
									c_{4,\infty}c_{5,\infty} = c_{3,\infty}.
\end{cases}
\end{equation}
			
The corresponding entropy  and entropy-dissipation functionals for system \eqref{SysChain} are
\begin{equation}\label{l3}
\mathcal{E}(\boldsymbol{c}) = \sum_{i=1}^{5}\int_{\Omega}(c_i\log{c_i} - c_i + 1)dx
\end{equation}
and 
\begin{equation}\label{l4}
\mathcal{D}(\boldsymbol{c}) = \sum_{i=1}^{5}\int_{\Omega}d_i\frac{|\nabla c_i|^2}{c_i}dx + \int_{\Omega}\left((c_1c_2 - c_3)\log{\frac{c_1c_2}{c_3}} + (c_4c_5 - c_3)\log{\frac{c_4c_5}{c_3}}\right)dx,
\end{equation}
respectively.

\begin{theorem}[Explicit convergence to equilibrium for the chain reaction \eqref{ReChain}]\label{mainChain}\hfill\\
Let $\Omega \subset \mathbb R^n$ be a bounded domain with smooth boundary. Assume positive diffusion coefficients  $d_i>0$ for all $i=1,\ldots, 5$.

	
Suppose assumptions $(\mathbf{A1})$, i.e. that 
system \eqref{SysChain} has a unique positive detailed balance equilibrium \eqref{PosEquiChain} and observe that system \eqref{SysChain} has no boundary equilibria.
Note that for a given positive initial mass vector $\M\in \mathbb{R}^{3}_+$
corresponding to three linear independent conservation laws \eqref{MassChain}, the assumption $(\mathbf{A1})$ is in fact a consequence of Lemma \ref{Chain_Equi} and thus satisfied.

Then, for any nonnegative function $\boldsymbol{c} = (c_1, \ldots, c_5) \in L^1\left(\Omega; [0,+\infty)^5\right)$ satisfying the mass conservation laws \eqref{MassChain}, 
the EED estimate
\begin{equation}\label{l5}
\mathcal{D}(\boldsymbol{c}) \geq \lambda_2(\mathcal{E}(\boldsymbol{c}) - \mathcal{E}(\boldsymbol{c}_{\infty}))
\end{equation}
holds {{for $\mathcal{E}(\boldsymbol{c})$ and $\mathcal{D}(\boldsymbol{c})$ defined in \eqref{l3} and \eqref{l4} respectively}}, and where $\lambda_2>0$ is a positive constant which can be {\normalfont{explicitly}} estimated in terms of the initial mass vector $\M$, the domain $\Omega$ and the diffusion coefficients $d_{i}, i=1,2,\ldots, 5$.
						
						\medskip
Moreover, for any nonnegative initial data $\boldsymbol c_0 \in L^p(\Omega)^5$ for some $p>2$ (sufficiently close to 2), there exists a global $L^p$-weak solution $\cc$ to \eqref{SysChain}. These weak solutions satisfy the conservation laws \eqref{MassChain} and 
the weak entropy entropy-dissipation law,
\begin{equation*}
\mathcal{E}(\cc(t)) + \int_{t_0}^t\mathcal{D}(\cc(s))ds = \mathcal{E}(\cc(t_0))\quad \text{ for a.a. } 0\le t_0<t.
\end{equation*}
Consequently, all these weak solutions converge exponentially to the equilibrium, i.e.
\begin{equation*}
									\sum_{i=1}^{5}\|c_i(t) - c_{i,\infty}\|_{L^1(\Omega)}^2 \leq C_{CKP}^{-1}(\mathcal{E}(\boldsymbol{c}_0) - \mathcal{E}(\boldsymbol{c}_{\infty}))e^{-\lambda_2 t}, \qquad \forall t>0,
						\end{equation*} 
						where $\lambda_2$ is in \eqref{l5} and $C_{CKP}$ is the constant in the Csisz$\acute{a}$r-Kullback-Pinsker inequality.
\end{theorem}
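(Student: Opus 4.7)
The goal is to establish the EED inequality \eqref{l5}; the exponential convergence statement then follows by a direct Gronwall argument applied to the weak entropy-dissipation law \eqref{weak_entropy} combined with the Csisz\'ar--Kullback--Pinsker inequality, exactly as sketched after \eqref{MainEstimate}. Existence of $L^p$-weak solutions with the required regularity, mass conservation, and an equality sign in the entropy-dissipation law can be adapted from \cite{DFPV, FL}, so I focus on the functional inequality.

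The strategy, following \cite{DeFe06, DeFe08, DFIFIP, FL}, is to split the relative entropy additively,
\begin{equation*}
\mathcal{E}(\cc) - \mathcal{E}(\cc_\infty) = \bigl[\mathcal{E}(\cc) - \mathcal{E}(\overline{\cc})\bigr] + \bigl[\mathcal{E}(\overline{\cc}) - \mathcal{E}(\cc_\infty)\bigr],
\end{equation*}
and to control each term separately. The \emph{spatial} contribution $\mathcal{E}(\cc) - \mathcal{E}(\overline{\cc})$ is controlled by the Fisher-information part of $\mathcal{D}$ via the logarithmic Sobolev inequality applied species-wise, $\int_\Omega d_i |\nabla c_i|^2/c_i\,dx \geq C_{LSI}(\Omega)\, d_i \int_\Omega c_i \log(c_i/\overline{c_i})\,dx$. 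As a byproduct, the Poincar\'e inequality yields $\|\sqrt{c_i} - \overline{\sqrt{c_i}}\|_{L^2}^2 \lesssim \mathcal{D}(\cc)$, which is crucial for handling reaction covariances in the next step.

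For the \emph{chemical} contribution $\mathcal{E}(\overline{\cc}) - \mathcal{E}(\cc_\infty)$, I would apply Jensen's inequality to the jointly convex function $(x,y) \mapsto (x-y)\log(x/y)$ to lower-bound the reaction part of $\mathcal{D}(\cc)$ in terms of spatial averages. Since $\overline{c_1 c_2} \neq \overline{c_1}\,\overline{c_2}$ in general, I would split $c_i = \overline{c_i} + (c_i - \overline{c_i})$ and absorb the resulting covariance-type error terms using a small fraction of the spatial Poincar\'e estimate above. The change of variables $\mu_i^2 = \overline{c_i}/c_{i,\infty}$, combined with the detailed balance identities $c_{1,\infty}c_{2,\infty} = c_{3,\infty} = c_{4,\infty}c_{5,\infty}$ and the elementary bound $(x-y)\log(x/y) \geq 4(\sqrt{x}-\sqrt{y})^2$, then reduces the problem to a purely algebraic inequality on $\mathbb{R}^5$ of the form
\begin{equation*}
(\mu_1\mu_2 - \mu_3)^2 + (\mu_4\mu_5 - \mu_3)^2 \geq K(\M) \sum_{i=1}^{5}(\mu_i-1)^2,
\end{equation*}
subject to the three linear constraints $c_{i,\infty}(\mu_i^2-1) + c_{3,\infty}(\mu_3^2-1) + c_{j,\infty}(\mu_j^2-1) = 0$ coming from \eqref{MassChain}.

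The main obstacle is to prove this finite-dimensional inequality with an \emph{explicit} constant $K(\M)$. A compactness--contradiction argument \`a la \cite{MHM} only yields a non-constructive bound. I would instead split the parameter space: in the near-equilibrium regime $\sum_i(\mu_i-1)^2 \leq \varepsilon$, use a second-order Taylor expansion of $\mu_1\mu_2-\mu_3$ and $\mu_4\mu_5-\mu_3$ around $\mu_i = 1$ together with invertibility of the constraint Jacobian (guaranteed by the absence of boundary equilibria and the positivity of $\M$ via Lemma \ref{Chain_Equi}); in the far-from-equilibrium regime $\sum_i(\mu_i-1)^2 \geq \varepsilon$, use the uniform a priori upper and positive lower bounds on $\mu_i$ coming from $c_{i,\infty}\mu_i^2 \leq M_{i,j}$ and the no-boundary-equilibria assumption. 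A secondary but delicate task is the quantitative balancing of small constants so that the Jensen-induced covariance errors are absorbed by a strict fraction of the log-Sobolev budget, while still leaving enough of the reaction dissipation to dominate the chemical part of the relative entropy.
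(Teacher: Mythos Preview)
Your overall architecture --- additive splitting of the relative entropy, log-Sobolev for the spatial part, reduction of the chemical part to a finite-dimensional inequality in the variables $\mu_i = \sqrt{\overline{c_i}/c_{i,\infty}}$ --- matches the paper's scheme (assembled in Theorem~\ref{the:main} and Lemma~\ref{lem:intermediate}). The genuine divergence is in how you propose to handle the algebraic core inequality, and there your plan has a gap.

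You propose a near/far split: Taylor-expand near $\mu_i=1$, and in the far regime appeal to ``uniform a priori upper and positive lower bounds on $\mu_i$ coming from \ldots\ the no-boundary-equilibria assumption''. But there are no positive lower bounds on $\mu_i$: the conservation laws permit $\overline{c_{i_0}}=0$, i.e.\ $\mu_{i_0}=0$. The absence of boundary equilibria only tells you that when some $\mu_{i_0}=0$ the left-hand side $(\mu_1\mu_2-\mu_3)^2+(\mu_4\mu_5-\mu_3)^2$ cannot vanish; it does \emph{not} hand you an explicit lower bound without further work. Extracting that bound by compactness is exactly the non-constructive step the paper is trying to avoid (cf.\ the discussion after Theorem~\ref{MHM}).

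The paper circumvents this in two ways that are genuinely different from your outline. First, for the algebraic inequality itself (Lemma~\ref{Chain_InterInequal}), it does \emph{not} use any near/far dichotomy. Instead it extracts from the conservation laws the sign relations $\mathrm{sgn}(\mu_1)=\mathrm{sgn}(\mu_2)$, $\mathrm{sgn}(\mu_4)=\mathrm{sgn}(\mu_5)$, and then runs an elementary sign--case analysis on $(\mu_1,\mu_3,\mu_4)$: in every admissible sign pattern at least two of the three terms $G_1=[(1+\mu_1)(1+\mu_2)-(1+\mu_3)]^2$, $G_2=[(1+\mu_4)(1+\mu_5)-(1+\mu_3)]^2$, $G_3=[(1+\mu_1)(1+\mu_2)-(1+\mu_4)(1+\mu_5)]^2$ dominate the relevant $\mu_i^2$ by direct monotonicity, yielding the global explicit constant $H_4=\tfrac{1}{12}$. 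Second, the case where some $\overline{c_{i_0}}\le\varepsilon^2$ is handled separately (Lemma~\ref{lem:lowerbound_chain}), not through the algebraic inequality at all, but by a direct lower bound on the dissipation: either some $\|\delta_i\|^2$ is large (diffusion wins via Poincar\'e), or all deviations are small and the conservation laws force enough mass into the other species that one of the reaction terms $(\overline{C}_1\overline{C}_2-\overline{C}_3)^2$ or $(\overline{C}_4\overline{C}_5-\overline{C}_3)^2$ is bounded below by an explicit function of $\M$. This second mechanism is what replaces your unavailable ``positive lower bounds on $\mu_i$''.
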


\medskip

The proofs of the above Theorems \ref{mainsingle} and \ref{mainChain} -- in particular the corresponding EED estimates --
are based on the following If-Theorem \ref{the:main}, which can be understood as a proof of concept of how to derive explicit EED estimates for general mass-action-law detailed balance reaction-diffusion systems. 
More precisely, the following Theorem \ref{the:main} shows that provided the conservation laws of a suitable reaction-diffusion systems are sufficiently explicitly given to prove two natural key inequalities, 
then an EED estimate with explicitly estimable constants and rates follows from a general method of proof.

\begin{theorem}[Entropy entropy-dissipation estimates for general detailed balance RD networks \eqref{SS}]\label{the:main}\hfill\\
Let $\Omega\subset \mathbb R^n$ be a bounded domain with smooth boundary. Assume positive diffusion coefficients $d_{i}>0$ for all $i=1,\ldots, I$. Assume stoichiometric coefficients $\boldsymbol{\alpha}^r = (\alpha_1^r, \ldots, \alpha_I^r)\in (\{0\}\cup[1,\infty))^{I}$ and $\boldsymbol{\beta}^r = (\beta_1^r, \ldots, \beta_I^r)\in (\{0\}\cup[1,\infty))^{I}$ and normalised reaction rate constants $k^r>0$ as rescaled in \eqref{kr} for all $r=1,2,\ldots, R$.
Suppose assumptions $(\mathbf{A1})$ and $(\mathbf{A2})$, i.e. that system \eqref{SS}--\eqref{R} features a unique positive detailed balance equilibrium $\boldsymbol{c}_{\infty}$ of the form \eqref{positiveequilibrium} and no boundary equilibria. 
The positivity of the detailed balance equilibrium follows, for instance, from supposing 
a positive initial mass vector $\M\in \mathbb{R}^{m}_+$
corresponding to $m$ linear independent conservation laws \eqref{g9}, {see e.g. \cite[Lemma 3.4]{GGH96}.}

Moreover, assume
\begin{itemize}
\item[i)] that for all bounded states $\ovv{\boldsymbol{c}}\in [0,K]^{I}$ (for a $K>0$ sufficiently large {\color{black} depending only on the initial relative entropy $\mathcal{E}(\cc_0|\ww)$}), which satisfy the conservation laws
\begin{equation*}
\mathbb Q\, \ovv{\cc} = \M,
\end{equation*}
there exists a constant $H_4>0$ such that
\begin{equation}\label{ODEEqui}
\sum_{r=1}^{R}\left[\sqrt{\frac{\ovv{\cc}}{\cc_\infty}}^{\,\aa^r} - \sqrt{\frac{\ovv{\cc}}{\cc_\infty}}^{\,\bb^r}\right]^2 \geq H_4\sum_{i=1}^{I}\left(\frac{\sqrt{\ovv{c_i}}}{\sqrt{c_{i,\infty}}}-1\right)^2,
\end{equation}
\item[ii)] and 
that there exists $0 < \varepsilon \ll 1$ such that if $\overline{c_{i_0}} \leq \varepsilon^2$ for some $i_0\in \{1, \ldots, I\}$, then we can find $H_5(\varepsilon)>0$ depending on $\varepsilon$ such that
\begin{equation}\label{FarEqui}
				\sum_{i=1}^{I}\|\nabla \sqrt{c_i}\|_{L^2(\Omega)}^2  + \sum_{r=1}^{R}\left(\ovv{\sqrt{\cc}}^{\,\boldsymbol{\alpha}^r} - \ovv{\sqrt{\cc}}^{\,\boldsymbol{\beta}^r}\right)^2 \geq H_5.
\end{equation}
\end{itemize}
\medskip
Then, for the entropy and entropy-dissipation functional
\eqref{entropy} and \eqref{entropy_dissipation},
the key entropy entropy-dissipation inequality \eqref{MainEstimate}, i.e. 
\begin{equation*}
\mathcal{D}(\cc) \geq \lambda(\mathcal{E}(\cc) - \mathcal{E}(\cc_{\infty}))
\end{equation*}
holds true for all functions $\cc\in L^1(\Omega; [0,+\infty)^I)$ obeying the mass conservation $\mathbb{Q}\,\overline{\cc} = \mathbf{M}$, and where $\lambda = \lambda(\Omega, H_4, H_5, \mathbb D, \M, \boldsymbol{\alpha}^r, \boldsymbol{\beta}^r, k^r)$ is a constant depending {\emph{explicitly}} on the domain $\Omega$, the constants  $H_4$ and $H_5$, the diffusion coefficients $\mathbb D$, the stoichiometric coefficients $\aa^r, \bb^r$, the reaction rate constants $k^{r}$, and the initial mass vector $\M$. 
 \end{theorem}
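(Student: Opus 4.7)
The overall strategy is to decompose the relative entropy additively,
\begin{equation*}
\mathcal{E}(\cc)-\mathcal{E}(\ww) = \mathcal{E}(\cc\,|\,\ovv{\cc}) + \mathcal{E}(\ovv{\cc}\,|\,\ww),
\end{equation*}
and to bound each piece by a corresponding fraction of $\mathcal{D}(\cc)$. Introducing $\mu_i:=\sqrt{c_i}$ makes the diffusion dissipation equal $4\sum_i d_i\|\nabla\mu_i\|_{L^2(\Omega)}^2$, and a logarithmic Sobolev (or Poincaré) inequality on $\Omega$ (recall $|\Omega|=1$) absorbs the oscillation term $\mathcal{E}(\cc\,|\,\ovv{\cc})$ into, say, one half of the diffusion dissipation; the other half will be reserved for the next step.

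For the reaction dissipation I would first apply the elementary inequality $(x-y)\log(x/y)\ge 4(\sqrt{x}-\sqrt{y})^2$ together with Jensen's inequality $\int_\Omega f^2\,dx\ge \ovv{f}^2$ to obtain
\begin{equation*}
\sum_{r=1}^R k^r\!\int_\Omega(\cc^{\aa^r}-\cc^{\bb^r})\log\tfrac{\cc^{\aa^r}}{\cc^{\bb^r}}\,dx\ \ge\ 4\sum_{r=1}^R k^r\bigl(\ovv{\mm^{\aa^r}}-\ovv{\mm^{\bb^r}}\bigr)^2,
\end{equation*}
and then carry out the core reduction: replace averages of monomials $\ovv{\mm^{\aa^r}}$ by monomials of averages $\ovv{\mm}^{\,\aa^r}$, paying the error term from the reserved half of the diffusion dissipation. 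This would be done via a pointwise truncation $\mu_i\wedge L$ (with $L$ large, determined solely from the a priori entropy bound $\int c_i\log c_i\le \mathcal{E}(\cc_0)+C$) combined with a Taylor expansion $\mu_i=\ovv{\mu_i}+\delta_i$ and the Poincaré bound $\|\delta_i\|_{L^2}^2\le P(\Omega)^{-1}\|\nabla\mu_i\|_{L^2}^2$; high-concentration tails are absorbed using Sobolev embeddings applied to $|\nabla\mu_i|^2$. The net result is a lower bound of the shape
\begin{equation*}
\mathcal{D}(\cc)\ \ge\ C_1\sum_{i=1}^I d_i\|\nabla\mu_i\|_{L^2(\Omega)}^2 + C_2\sum_{r=1}^R\bigl(\ovv{\mm}^{\,\aa^r}-\ovv{\mm}^{\,\bb^r}\bigr)^2.
\end{equation*}

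To close the argument I would perform the case split built into assumption (ii). If $\ovv{c_{i_0}}\le \varepsilon^2$ for some $i_0$, inequality \eqref{FarEqui} alone yields $\mathcal{D}(\cc)\ge c\,H_5>0$, while $\mathcal{E}(\cc)-\mathcal{E}(\ww)\le \mathcal{E}(\cc_0)-\mathcal{E}(\ww)$ by the weak entropy entropy-dissipation law \eqref{weak_entropy}; this produces a (possibly crude but explicit) EED constant in this ``far-from-equilibrium'' regime. In the complementary regime $\ovv{c_i}>\varepsilon^2$ for every $i$, the conservation law $\mathbb{Q}\,\ovv{\cc}=\M$ combined with the cutoff $L$ places $\ovv{\cc}\in[\varepsilon^2,K]^I$ so that assumption (i) applies. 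Using $\ww^{\aa^r}=\ww^{\bb^r}$ I would rewrite $\ovv{\mm}^{\,\aa^r}-\ovv{\mm}^{\,\bb^r}$ in terms of $\sqrt{\ovv{\cc}/\ww}$, invoke \eqref{ODEEqui} to obtain a lower bound proportional to $\sum_i\bigl(\sqrt{\ovv{c_i}/c_{i,\infty}}-1\bigr)^2$, and conclude via the Taylor expansion $\mathcal{E}(\ovv{\cc}\,|\,\ww)\le C\sum_i\bigl(\sqrt{\ovv{c_i}/c_{i,\infty}}-1\bigr)^2$, valid uniformly on the compact set $[\varepsilon^2,K]^I$.

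The hard part will be the truncation-and-expansion step that converts $\ovv{\mm^{\aa^r}}$ into $\ovv{\mm}^{\,\aa^r}$: since the stoichiometric coefficients are only assumed to satisfy $\alpha_i^r\in\{0\}\cup[1,\infty)$, they may be non-integer and therefore force non-polynomial Taylor remainders whose size must be entirely absorbed by a small fraction of $\sum_i\|\nabla\mu_i\|_{L^2(\Omega)}^2$ in order to keep every constant along the way explicit. A secondary but non-trivial point is to ensure that the cutoff level $K$ entering assumption \eqref{ODEEqui} can be determined from $\mathcal{E}(\cc_0\,|\,\ww)$ alone, independently of the solution itself, which is precisely what makes the final convergence rate $\lambda$ genuinely explicit.
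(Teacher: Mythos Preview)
Your overall architecture matches the paper's proof almost exactly: additive splitting of the relative entropy, the Logarithmic Sobolev inequality for $\mathcal{E}(\cc\,|\,\ovv{\cc})$, the elementary bound $(x-y)\log(x/y)\ge 4(\sqrt{x}-\sqrt{y})^2$, a truncation/domain-decomposition argument to pass from $\|\mathbf{C}^{\aa^r}-\mathbf{C}^{\bb^r}\|^2$ to $(\ovv{\mathbf{C}}^{\,\aa^r}-\ovv{\mathbf{C}}^{\,\bb^r})^2$, and then the case split governed by $\varepsilon$. The paper packages the first three of these into an intermediate lemma (Lemma~\ref{lem:intermediate}) and carries out the truncation by decomposing $\Omega=S\cup S^{\perp}$ according to $|\delta_i|\le L$ rather than truncating the $\mu_i$ themselves, but this is cosmetic.

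There is, however, a genuine gap in your handling of the far-from-equilibrium case. You write that when $\ovv{c_{i_0}}\le\varepsilon^2$, ``$\mathcal{E}(\cc)-\mathcal{E}(\ww)\le \mathcal{E}(\cc_0)-\mathcal{E}(\ww)$ by the weak entropy entropy-dissipation law \eqref{weak_entropy}''. But \eqref{weak_entropy} is a property of \emph{solutions}, whereas the EED estimate \eqref{MainEstimate} is asserted as a \emph{functional inequality} for all $\cc\in L^1(\Omega;[0,\infty)^I)$ with $\mathbb{Q}\,\ovv{\cc}=\M$. For such a generic $\cc$ the full relative entropy $\mathcal{E}(\cc)-\mathcal{E}(\ww)$ is not bounded: even with $\ovv{c_i}\le K$, the term $\int_\Omega c_i\log c_i$ can be arbitrarily large (concentrate mass on a small set). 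So your Case~2 argument, as written, does not close.

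The fix is the one the paper uses and is already implicit in your own Step~1: since half of $\mathcal{D}(\cc)$ has been spent on the LSI to control $\mathcal{E}(\cc\,|\,\ovv{\cc})$, in Case~2 you only need to bound $\mathcal{E}(\ovv{\cc}\,|\,\ww)$, not the full relative entropy. And $\mathcal{E}(\ovv{\cc}\,|\,\ww)$ \emph{is} bounded by an explicit constant depending only on $K$ (via the monotone function $\Phi$ of \eqref{Phi}, or even more crudely by $(\sqrt{\ovv{c_i}}-\sqrt{c_{i,\infty}})^2\le 2(\ovv{c_i}+c_{i,\infty})\le 4K$, summed over $i$). Combining this with the lower bound $H_5$ from \eqref{FarEqui} yields the EED constant in this regime without any reference to solutions or to $\cc_0$.
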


The first assumption \eqref{ODEEqui} in Theorem \ref{the:main} 
can be either interpreted as a quantitative version of the uniqueness 
of the positive detailed balance equilibrium \eqref{positiveequilibrium} or an kind of EED estimate (w.r.t. $L^2$-distances of square-roots of concentrations) for 
the ODE system associated to \eqref{SS}--\eqref{R}. 
Indeed, both viewpoints reflect the observation that 
the positive detailed balance equilibrium $\cc_\infty$  balances all 
reactions (in fact the left hand side of \eqref{ODEEqui} is precisely zero for all states $\ovv{\cc}$, which balance all reactions) and satisfies all conservation law 
$\mathbb Q\, \ovv{\cc} = \M$. These two conditions uniquely define the positive detailed balance equilibrium $\cc_\infty$, which is the only state for which the right hand side of \eqref{ODEEqui} is zero.  
Moreover, considering bounded states 
$0\le\ovv{\cc}\le K$
constitutes no relevant restriction since Lemma \ref{bounded} below will show that all solutions to \eqref{SS}--\eqref{R} with bounded initial relative entropy satisfy naturally uniform-in-time bounds $0\le\ovv{\cc}\le K$ for some sufficiently large constant $K>0$.

We remark that inequality \eqref{ODEEqui} can be shown to hold with an explicit constant via Taylor expansion for states $\ovv{\cc}$ near the equilibrium $\cc_\infty$, yet this is insufficient to prove an explicit bound for the constant $H_4$ for states  $\ovv{\cc}$ far from equilibrium due to the non-convexity of the problem, see \cite{MHM}.

In our proof of the Theorems \ref{mainsingle} and \ref{mainChain}, we present a global method of 
proving inequality \eqref{ODEEqui} with an explicit bound for $H_4$. The key to this proof is to exploit the structure of the corresponding conservations laws \eqref{MassSingle} and \eqref{MassChain}, respectively. 
In fact, for the proof of Theorem \ref{mainsingle}, it is sufficient 
to observe that the conservations laws \eqref{MassSingle} entail some qualitative sign relations between the states 
$\ovv{\cc}$ around the equilibrium $\boldsymbol{c}_{\infty}$. 
\medskip

The second inequality \eqref{FarEqui} is a quantified version of the natural observation that all states $\ovv{\cc}$, for which less than a sufficiently small amount of mass is present in at least one 
concentration $\ovv{{c}_{i_0}}<\varepsilon$ (recall that the detailed balance equilibrium $\boldsymbol{c}_{\infty}$ contains a fixed positive amount of mass in all concentrations), are necessarily far from equilibrium in the sense that left hand side of  \eqref{FarEqui}, which is a lower bound for the entropy dissipation $\mathcal D(\cc)$ (see Section \ref{subsec:2.2}), is itself bounded below by a positive constant $H_5(\varepsilon)$. More precisely, \eqref{FarEqui} can be interpretated that such states are either inhomogeneous in space 
and thus dissipate entropy in terms of the Fisher information (which is the first term on the left hand side of \eqref{FarEqui})
or that such states are still subject to a significant amount of chemical reactions and thus dissipate entropy in terms of the second term on the left hand side of \eqref{FarEqui}. 
The inequality \eqref{FarEqui} also reflects the fact that the considered system possess no boundary equilibria, and that the entropy dissipation is bounded away from zero when the concentration is close to the boundary $\partial\mathbb R_+^{I}$.
\medskip

Finally, we remark our believe that our method of proving
the inequalities \eqref{ODEEqui} and \eqref{FarEqui} in Theorems \ref{mainsingle} and \ref{mainChain}
should also apply to any other systems of the form
\eqref{SS}--\eqref{R}, once the structure of the conservation laws 
is sufficiently explicitly given. The main reason for having to state Theorem \ref{the:main} as an If-Theorem by assuming 
the inequalities \eqref{ODEEqui} and \eqref{FarEqui}
is the fact that we do not know how to provide an explicit proof 
of these two natural inequalities only based on the pure existence of the matrix $\mathbb Q$, without knowledge of 
its structure which formalises the conservation laws.

\medskip

The rest of this paper is organized as follows: In Section \ref{sec:2}, we first provide preliminary estimates and results before proving Theorem \ref{the:main} for general systems of the form \eqref{SS}--\eqref{R}. The proofs of Theorems \ref{mainsingle} and \ref{mainChain} are presented in Sections \ref{sec:3} and \ref{sec:4}, respectively. Finally, we discuss the further possible applications and some open problems in Section \ref{sec:5}.
						
\section{The general method}\label{sec:2}

In this section, we first briefly state preliminary estimates and inequalities of the mass action reaction-diffusion systems 
\eqref{SS}--\eqref{entropy_dissipation} before we present the 
details of our proposed method.

The following notations and elementary inequalities are used in our proof:
\begin{description}[font=\normalfont]
 \item[{\it $L^2(\Omega)$-norm}] \hfill\\
For the rest of this paper, we will denote by $\|\cdot\|$ the usual norm of $L^2(\Omega)$:
$
\|f\|^2 = \int_{\Omega}|f(x)|^2\,dx.\smallskip
$
\item[{\it Spatial averages and square-root abbreviation}]\hfill\\ 
For a function $f:\Omega \rightarrow \mathbb{R}$, the spatial average is denoted by (recall $|\Omega|=1$)
\begin{equation*}
													\overline{f} = \int_{\Omega}f(x)\,dx.
\end{equation*}
Moreover, for a quantity denoted by small letters, we introduce the short hand notation of the same uppercase letter as its square root, 
e.g. 
$$
C_i = \sqrt{c_i}, \quad \text{and}\quad C_{i,\infty} = \sqrt{c_{i,\infty}}.
$$

\item[{\it Additivity of Entropy}] see e.g. \cite{DeFe08,DFIFIP}, \cite[Lemma 2.3]{MHM}
\begin{equation}\label{add}
													\begin{aligned}
																\mathcal{E}(\cc) - \mathcal{E}(\cc_{\infty}) &= (\mathcal{E}(\cc) - \mathcal{E}(\overline{\cc}))+(\mathcal{E}(\overline{\cc}) - \mathcal{E}(\cc_{\infty}))\\
																&=\sum_{i=1}^{I}\int_{\Omega}c_i\log{\frac{c_i}{\overline{c}_i}}dx + \sum_{i=1}^{I}\left(\overline{c}_i\log{\frac{\overline{c}_i}{c_{i,\infty}}} - \overline{c}_i + c_{i,\infty}\right).
													\end{aligned}
										\end{equation}
							\item[\it The Poincar\'{e} inequality]
For all $f\in H^1(\Omega)$, there exists $C_P(\Omega)>0$ depending only on $\Omega$ such that 
								\begin{equation*}
									\|\nabla f\|^2 \geq C_P\|f - \overline{f}\|^2.
								\end{equation*}
							\item[\it The Logarithmic Sobolev inequality]
							There exists $C_{LSI}>0$ depending only on $\Omega$ such that 
							\begin{equation}\label{LSI}
								\int_{\Omega}\frac{|\nabla f|^2}{f}dx \geq C_{LSI}\int_{\Omega}f\log{\frac{f}{\overline{f}}}dx.
							\end{equation}
\item[{\it An elementary inequality}]\ 					
$
(a-b)(\log a - \log b) \geq 4\bigl(\sqrt{a} - \sqrt{b}\bigr)^2.\smallskip
$
							\item[{\it An elementary function}]\hfill\\
										Consider $\Phi: [0,+\infty) \rightarrow [0,+\infty)$ defined as (and continuously extended at $z=0,1$)
										\begin{equation}\label{Phi}
													\Phi(z) = \frac{z\log z - z + 1}{\left(\sqrt{z} - 1\right)^2}.
										\end{equation}
Then, $\Phi$ is increasing with $\lim\limits_{z\rightarrow 0}\Phi(z) = 1$ and $\lim\limits_{z\rightarrow 1}\Phi(z) = 2$.												  
				\end{description}


\subsection{Preliminary estimates and inequalities}\hfill

\medskip
{\color{black} The mass conservation laws \eqref{g9} play a crucial role in the analysis of this paper. However, due to the very low regularity of renormalised solutions, a rigorous justification of the formal conservation laws is not trivial. Following the ideas from \cite[Proposition 6]{Fis16}, which proved that any renormalised solution satisfies the weak entropy entropy-dissipation law \eqref{weak_entropy}, we prove here that the conservation laws \eqref{g9} hold indeed for a large class of renormalised solutions, which includes all renormalised solutions of the systems considered in this paper. Note that the following proposition is formulated for general systems \eqref{SS} with an additional assumption on the matrix $\mathbb Q$, which is satisfied by the systems \eqref{SysSingle} and \eqref{SysChain}.
\begin{proposition}[Mass conservation for renormalised solutions]\label{mass_renormalised}
Any renormalised solution $\cc$ of \eqref{SS}--\eqref{kr}, 
\textcolor{black}{for which the non-unique matrix $\mathbb Q$
as introduced in \eqref{Q} can be chosen to have non-negative entries, satisfies all the associated mass conservation laws \eqref{g9}, i.e. 
we have for all $t\geq 0$
}
\begin{equation*}
		\mathbb Q\,\overline{\cc}(t) = \mathbb Q\,\overline{\cc_0}.
\end{equation*}
\end{proposition}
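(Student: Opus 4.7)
The plan is to mimic the truncation technique from Fischer's proof of the weak entropy-dissipation law in \cite{Fis16}: test the renormalised formulation \eqref{renormalised} against a smooth truncation of the linear functional $\cc\mapsto \sum_i q_{ki}c_i$ (where $q_{ki}$ denotes the entries of the $k$-th row of $\mathbb Q$), and then pass to the limit as the truncation is removed. For each fixed $k\in\{1,\dots,m\}$ and $n\in\mathbb N$, I would pick smooth truncations $\phi_n,\chi_n:\mathbb R_{\ge 0}\to\mathbb R_{\ge 0}$ with $\phi_n(s)=s$ and $\chi_n(s)=1$ for $s\le n$, both becoming constant or zero for $s\ge 2n$, and derivative bounds $|\phi_n'|\le 1$, $|\phi_n''|\le C/n$, $|\chi_n'|\le C/n$, $|\chi_n''|\le C/n^{2}$, and set
\begin{equation*}
\xi_n(\cc) \;=\; \phi_n\!\Bigl(\sum_{i=1}^{I} q_{ki}\,c_i\Bigr)\,\prod_{j=1}^{I}\chi_n(c_j).
\end{equation*}
The hypothesis $q_{ki}\ge 0$ enters here in two ways: it ensures $\sum_i q_{ki}c_i$ is non-negative and controlled in $L^\infty_{\mathrm{loc}}(L^1)$ by the $L^1$-bound on each $c_i$ (driving the dominated-convergence arguments below); and on the bulk set $\{\max_j c_j\le n\}$ one has $\xi_n(\cc)=\sum_i q_{ki}c_i$, hence $\partial_i\xi_n(\cc)=q_{ki}$ and $\partial_i\partial_j\xi_n\equiv 0$.

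Next, I would insert $\xi_n$ into \eqref{renormalised} with $\psi\equiv 1$ (after a standard time-cutoff approximation to cover the required smoothness of $\psi$). The $\partial_t\psi$ and $\nabla\psi$ terms vanish, leaving
\begin{equation*}
\int_\Omega\xi_n(\cc(T))\,dx-\int_\Omega\xi_n(\cc_0)\,dx \;=\; -\sum_{i,j}\int_0^T\!\!\int_\Omega\partial_i\partial_j\xi_n(\cc)(d_i\nabla c_i)\!\cdot\!\nabla c_j\,dx\,dt\;-\;\sum_{i}\int_0^T\!\!\int_\Omega\partial_i\xi_n(\cc)R_i(\cc)\,dx\,dt.
\end{equation*}
On the left-hand side, monotone/dominated convergence gives $\int\xi_n(\cc(T))\to\sum_i q_{ki}\overline{c_i}(T)$ and likewise for the initial data, producing the $k$-th component of $\mathbb Q\,\overline{\cc}(T)-\mathbb Q\,\overline{\cc_0}$. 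The Hessian term is localised in $\{\max_jc_j\in[n,2n]\}$ with derivatives of size $O(1/n)$; using $|\nabla c_i|^2=4c_i|\nabla\sqrt{c_i}|^2$ and the $\sqrt{c_i}\in L^2_{\mathrm{loc}}(H^1)$-regularity from Proposition \ref{renormalised_sol}, the term is bounded by $C\sum_i\int_0^T\!\!\int_{\{c_i\ge n\}}|\nabla\sqrt{c_i}|^2\,dx\,dt$, which vanishes by dominated convergence since $|\{c_i\ge n\}|\to 0$ pointwise.

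The main obstacle will be the reaction term, because the low regularity of renormalised solutions does not a priori guarantee $R_i(\cc)\in L^1$. On the bulk set $\{\max_jc_j\le n\}$ the algebraic identity \eqref{Q} yields the pointwise cancellation
\begin{equation*}
\sum_i\partial_i\xi_n(\cc)R_i(\cc)\;=\;\sum_i q_{ki}R_i(\cc)\;=\;0,
\end{equation*}
so the reaction integral is concentrated on the truncation layer $\{c_j\in[n,2n]\text{ for some }j\}$. To show the residual contribution tends to zero, I would rewrite $\sum_i\partial_i\xi_n R_i=\sum_i(\partial_i\xi_n-q_{ki})R_i$, factor each reaction rate as
\begin{equation*}
K^r(\cc)\;=\;k^r\bigl(\sqrt{\cc^{\boldsymbol{\alpha}^r}}+\sqrt{\cc^{\boldsymbol{\beta}^r}}\bigr)\bigl(\sqrt{\cc^{\boldsymbol{\alpha}^r}}-\sqrt{\cc^{\boldsymbol{\beta}^r}}\bigr),
\end{equation*}
and combine the weak entropy-dissipation inequality \eqref{weak_entropy} with the elementary bound $(a-b)(\log a-\log b)\ge 4(\sqrt a-\sqrt b)^2$ to obtain uniform $L^2((0,T)\times\Omega)$-control of $\sqrt{\cc^{\boldsymbol{\alpha}^r}}-\sqrt{\cc^{\boldsymbol{\beta}^r}}$. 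The remaining factor $(\partial_i\xi_n-q_{ki})(\sqrt{\cc^{\boldsymbol{\alpha}^r}}+\sqrt{\cc^{\boldsymbol{\beta}^r}})$ is supported in the truncation layer, whose measure is $O(1/n)$ by Markov's inequality and the $L^\infty(L^1)$-bound; using the polynomial growth of $R_i$ and a Cauchy–Schwarz/$\varepsilon$-Young splitting, this factor is shown to vanish in $L^2$ as $n\to\infty$. Carrying this out for every row of $\mathbb Q$ yields $\mathbb Q\,\overline{\cc}(T)=\mathbb Q\,\overline{\cc_0}$ for almost every $T>0$, and continuity of $\overline{\cc}(\cdot)$ in time (inherited from the renormalised formulation) upgrades this to every $t\ge 0$.
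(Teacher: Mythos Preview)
Your overall strategy---truncate, test in \eqref{renormalised} with $\psi\equiv 1$, pass to the limit---is the right one, and you correctly identify the reaction term as the main obstacle. However, your treatment of that term has a genuine gap. The extra factor $\prod_j\chi_n(c_j)$ in your test function destroys the exact algebraic cancellation $\sum_i q_{ki}R_i(\cc)=0$, and your claim that the residual reaction integral on the truncation layer vanishes as $n\to\infty$ does not survive a quantitative check: on the set where some $c_j\in[n,2n]$ (and all $c_\ell\le 2n$), the factor $\sqrt{\cc^{\boldsymbol{\alpha}^r}}+\sqrt{\cc^{\boldsymbol{\beta}^r}}$ can be of order $n^{|\boldsymbol{\alpha}^r|/2}$, while the Markov bound on the layer measure is only $O(1/n)$. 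Pairing via Cauchy--Schwarz with the uniform $L^2$-bound on $\sqrt{\cc^{\boldsymbol{\alpha}^r}}-\sqrt{\cc^{\boldsymbol{\beta}^r}}$ from the entropy dissipation therefore leaves a residual of order $n^{(|\boldsymbol{\alpha}^r|-1)/2}$, which diverges for any genuinely nonlinear reaction.

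The paper's proof sidesteps this entirely by a simpler choice of test function: take $\xi(\cc)=\theta_M\bigl(\sum_\ell q_{k\ell}c_\ell\bigr)$, a function of the conserved linear combination \emph{alone}, with $\theta_M(s)=s$ for $s\le M$, $\theta_M'(s)=0$ for $s\ge 2M$, and $|\theta_M''(s)|\le \frac{1}{Ms}\mathbf{1}_{[M,2M]}(s)$. Then $\partial_i\xi=q_{ki}\,\theta_M'\bigl(\sum_\ell q_{k\ell}c_\ell\bigr)$, so
\[
\sum_i\partial_i\xi(\cc)\,R_i(\cc)=\theta_M'\Bigl(\sum_\ell q_{k\ell}c_\ell\Bigr)\sum_i q_{ki}R_i(\cc)=0
\]
pointwise for every $M$: the reaction term vanishes identically and no layer estimate is needed at all. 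Only the Hessian (diffusion) term survives, and it is \emph{there} that the non-negativity of $q_{k\ell}$ is actually used: writing $\nabla c_i=2\sqrt{c_i}\,\nabla\sqrt{c_i}$, the integrand is bounded by
\[
\frac{4}{M}\,\frac{q_{ki}\sqrt{c_i}\,q_{kj}\sqrt{c_j}}{\sum_\ell q_{k\ell}c_\ell}\,|\nabla\sqrt{c_i}|\,|\nabla\sqrt{c_j}|,
\]
and the middle fraction is uniformly bounded precisely because all $q_{k\ell}\ge 0$. The regularity $\sqrt{c_i}\in L^2_{\mathrm{loc}}([0,\infty);H^1(\Omega))$ then gives an $O(1/M)$ bound on the whole Hessian term, and letting $M\to\infty$ yields the conservation law. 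In short: drop the individual cutoffs $\chi_n(c_j)$, and the hard part of your argument becomes unnecessary.
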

\begin{remark}\label{massrenorm}
\textcolor{black}{ 
For detailed balance systems, which do not allow to choose a non-negative version  of the matrix $\mathbb Q$, it is still possible to show that renormalised 
solutions, which are constructed via an approximation scheme as in \cite{Fi14}, satisfy also the mass conservation laws \eqref{g9}. This follows directly from passing 
to the limit in the approximative conservation laws 
and the regularity of renormalised solutions in $L^{\infty}_{loc}([0,+\infty); (L\log L)(\Omega))$. 
However, it is unclear whether all renormalised solutions defined via \eqref{renormalised} 
in Proposition \ref{renormalised_sol} can indeed be constructed via 
such an approximation scheme. In \cite{Fis16}, Fisher proved that all renormalised solutions defined via \eqref{renormalised} satisfy the weak entropy entropy-dissipation inequality \eqref{weak_entropy}. Here we provide a similar proof that all renormalised solutions defined via \eqref{renormalised}
satisfy any mass conservation law of the form \eqref{g9}, provided that the corresponding row of 
 the matrix $\mathbb Q$ consists of non-negative entries. 
 Thus, for any system of the form \eqref{SS}--\eqref{kr}, where the non-unique matrix $\mathbb Q$ can be chosen to 
 only consist of non-negative entries, all renormalised solutions 
 satisfy all conservation laws. \\
 Note that there are some detailed balance systems like $\mathcal{C}_1 \rightleftharpoons 2\mathcal{C}_1 + \mathcal{C}_2$, where $\mathbb Q=(1,-1)$ and it is thus impossible to chose a matrix $\mathbb Q$ with non-negative entries. However, this example constitutes a chemically meaningless reaction, and one could speculate that the existence of non-negative versions of matrices $\mathbb Q$ is related to ``meaningful" chemical reactions. 
However, we are unaware of any result in the direction of classifying chemical reactions according to non-negative matrices $\mathbb Q$.
} 
\end{remark}
\begin{proof}[Proof of Proposition \ref{mass_renormalised}]
	For each $M>0$, we choose a smooth function $\theta_M: [0,\infty) \rightarrow [0,1]$ such that $\theta_M(s) = s$ for $s\in [0,M]$ and $\theta_M'(s) = 0$ for $s\geq 2M$, and moreover $\theta''_M(s) \leq \mathbf{1}_{[M,2M]}\frac{1}{Ms}$ for all $s\geq 0$. For any $k=1,\ldots, m$, let $(q_{k\ell})_{\ell=1,\ldots, I}$ be the $k$-th row of $\mathbb Q$. Then, by choosing
	\begin{equation*}
		\xi(\cc) = \theta_M\biggl(\sum_{\ell=1}^{I}q_{k\ell}c_{\ell}\biggr)
	\end{equation*}
	and $\psi = 1$ in \eqref{renormalised}, we obtain
	\begin{multline}\label{f1}
		\int_{\Omega}\theta_{M}\biggl(\sum_{\ell=1}^{I}q_{k\ell}c_{\ell}(T)\biggr)dx - \int_{\Omega}\theta_{M}\biggl(\sum_{\ell=1}^{I}q_{k\ell}c_{\ell,0}\biggr)dx\\
		= -\sum_{i,j=1}^{I}\int_{0}^{T}\!\!\int_{\Omega}q_{ki}q_{kj}\theta_M''\biggl(\sum_{\ell=1}^{I}q_{k\ell}c_{\ell}\biggr) d_i\nabla c_i\cdot \nabla c_jdxdt - \int_{0}^{T}\!\!\int_{\Omega}\theta_M'\biggl(\sum_{\ell=1}^{I}q_{k\ell}c_{\ell}\biggr)\biggl(\sum_{i=1}^{I}q_{ki}R_i(\cc)\biggr)dxdt.
	\end{multline}
	Due to the definition of $\mathbb Q$ (see \eqref{Q}), we have for the last term that 
	\begin{equation*}
		\sum_{i=1}^{I}q_{ki}R_i(\cc) = 0
	\end{equation*}
for all $\cc\in \mathbb R^{I}_{+}$. It remains to control the first term on the right hand side of \eqref{f1}. By recalling that  renormalised solution $\cc$ satisfies $\sqrt{c_i}\in L^2(0,T; H^1(\Omega))$ (see Proposition \ref{renormalised_sol}), 
we  estimate with $\theta_M''(s) \leq \frac{1}{Ms}$
\begin{equation*}
		\begin{aligned}
		&\left|\int_0^T\!\!\!\int_{\Omega}q_{ki}q_{kj}\theta_M''\biggl(\sum_{\ell=1}^{I}q_{k\ell} c_{\ell}\biggr)\nabla c_i\cdot \nabla c_j dxdt \right|
		\leq 4\int_0^T\!\!\!\int_{\Omega}\left|\theta_M''\biggl(\sum_{\ell=1}^{I}q_{k\ell} c_{\ell}\biggr)q_{ki}\sqrt{c_i}\,q_{kj}\sqrt{c_j}\right||\nabla \sqrt{c_i}||\nabla \sqrt{c_j}|dxdt\\
		&\leq \frac{4}{M}\int_0^T\!\!\!\int_{\Omega}\left|\frac{q_{ki}\sqrt{c_i}\,q_{kj}\sqrt{c_j}}{\sum_{\ell=1}^{I}q_{k\ell} c_{\ell}}\right||\nabla \sqrt{c_i}||\nabla \sqrt{c_j}|dxdt
		\leq \frac{C}{M}\|\sqrt{c_i}\|_{L^2(0,T; H^1(\Omega))}\|\sqrt{c_j}\|_{L^2(0,T;H^1(\Omega))}
		\end{aligned}
\end{equation*}
where we have used that $\frac{q_{ki}\sqrt{c_i}q_{kj}\sqrt{c_j}}{\sum_{\ell=1}^{I}q_{k\ell} c_{\ell}}$ is bounded for 
any non-negative vector $(q_{k\ell})_{\ell=1,\ldots, I}$. 
Inserting this into \eqref{f1} and then letting $M\rightarrow\infty$, noting that $c_i\in L^{\infty}(0,T;L^1(\Omega))$, we obtain for all $T>0$
	\begin{equation*}
		\sum_{\ell=1}^{I}q_{k\ell}\overline{c_{\ell}(T)} = \sum_{\ell=1}^{I}q_{k\ell}\overline{c_{\ell,0}}.
	\end{equation*}
	Since $k = 1,\ldots, m$ is arbitrary, the proof of Lemma \ref{mass_renormalised} is complete.
\end{proof}
}

\begin{lemma}[$L^1$-bounds]\label{bounded}
Assume that the initial data $\cc_0$ are nonnegative and satisfies $\mathcal{E}(\cc_0) < +\infty$. 
Then, {the non-negative renormalised solutions to \eqref{SS}--\eqref{kr} satisfy}
\begin{equation*}
		\|c_i(t)\|_{L^1(\Omega)} \leq K:= 2(\mathcal{E}({\cc_0}) + I)\qquad \forall t>0, \quad \forall i=1,2,\ldots, I.
		\end{equation*}
\end{lemma}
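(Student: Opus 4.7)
The plan is to combine the monotonicity of the entropy supplied by the weak entropy entropy-dissipation inequality \eqref{weak_entropy} with an elementary pointwise inequality relating a nonnegative real number to its entropy density.

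From \eqref{weak_entropy} with $t_0 = 0$, and dropping the nonnegative time integral of the dissipation, I immediately obtain $\mathcal{E}(\cc(t)) \leq \mathcal{E}(\cc_0)$ for almost every $t>0$. This inequality is available for renormalised solutions in the sense of Proposition \ref{renormalised_sol} thanks to the cited result of Fischer \cite{Fis16}, so no additional regularity consideration arises.

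Next, I would establish the elementary pointwise inequality $y \leq 2(y\log y - y + 1) + 2$ valid for all $y\geq 0$, by examining $g(y) = 2y\log y - 3y + 4$: its unique critical point is $y = e^{1/2}$, where $g(e^{1/2}) = 4 - 2\sqrt{e}>0$, and since also $g(0) = 4$, we conclude $g\geq 0$ on $[0,\infty)$. Applying this inequality pointwise at $y = c_i(x,t)$, integrating over $\Omega$ (recall $|\Omega|=1$), and summing over all species yields
\[
\sum_{i=1}^{I}\|c_i(t)\|_{L^1(\Omega)} \leq 2\,\mathcal{E}(\cc(t)) + 2I \leq 2\bigl(\mathcal{E}(\cc_0)+I\bigr) = K.
\]
Since each summand on the left is nonnegative, every individual $\|c_i(t)\|_{L^1(\Omega)}$ is controlled by $K$, which is the desired conclusion.

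No genuine obstacle is anticipated. The only minor subtlety is that \eqref{weak_entropy} is only stated for almost every $t\geq 0$, so the bound should be understood in the same a.e. sense; any other choice of elementary inequality of the form $y \leq a(y\log y - y + 1) + b$ would work equally well and merely redistribute the constants.
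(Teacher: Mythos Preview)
Your proof is correct and follows essentially the same route as the paper: entropy monotonicity from \eqref{weak_entropy}, followed by a pointwise inequality comparing $y$ with its entropy density. Your inequality $y \leq 2(y\log y - y + 1) + 2$ is in fact just a rearrangement of the paper's $x\log x - x + 1 \geq \frac{1}{2}x - 1$ (the paper reaches this via the intermediate bound $(\sqrt{x}-1)^2$, which is not strictly needed), so the two arguments are equivalent.
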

\begin{proof}

{\color{black}
Thanks to \cite[Proposition 6]{Fis16}, any renormalised solution to \eqref{SS} satisfies the weak entropy entropy-dissipation law \eqref{weak_entropy}, and consequently
\begin{equation}
	\mathcal{E}(\cc)(t) \leq \mathcal{E}(\cc_0) \qquad \text{ for all } t>0
\end{equation}
or equivalently
\begin{equation*}
\sum_{i=1}^{I}\int_{\Omega}\left(c_i(x,t)\log c_i(x,t) - c_i(x,t) + 1\right)dx \leq \mathcal{E}({\cc_0}) \qquad \text{ for all } t>0.
\end{equation*}
By using the elementary inequalities $x\log x - x + 1 \geq \left(\sqrt{x} - 1\right)^2 \geq \frac{1}{2}x - 1$ for all $x\ge 0$, we get
\begin{equation*}
\frac{1}{2}\sum_{i=1}^{I}\int_{\Omega}c_i(x,t)\,dx \leq \mathcal{E}({\cc_0}) + I.
\end{equation*}
This, combined with the non-negativity of solutions, completes the proof of the Lemma.
}
\end{proof}
The following Csisz\'{a}r-Kullback-Pinsker type inequality shows that $L^1(\Omega)$-convergence to equilibrium  follows from convergence of solutions in relative entropy $\mathcal{E}(\cc) - \mathcal{E}(\cc_{\infty})$. For a generalised Csisz\'{a}r-Kullback-Pinsker inequality, we refer to \cite{Arnold}. Here, we give an elementary proof using only the natural bounds given in Lemma \ref{bounded}.
\begin{lemma}[Csisz\'{a}r-Kullback-Pinsker type inequality]\label{CKP}
							For all $\cc\in L^1(\Omega; [0,+\infty)^{I})$ such that $\mathbb{Q}\,\overline{\cc} = \mathbb{Q}\,\cc_{\infty}$ and $\overline{c}_i \leq K$ for all $i=1,2,\ldots, I$ with some $K>{\max_{i}\{c_{i,\infty}\}>0}$, we have
\begin{equation*}
\mathcal{E}(\cc) - \mathcal{E}(\cc_{\infty}) \geq C_{CKP}\sum_{i=1}^{I}\|c_i - c_{i,\infty}\|_{L^1(\Omega)}^2
\end{equation*}							
where the constant $C_{CKP}$ depends only on the domain $\Omega$ and the constant $K$.
\end{lemma}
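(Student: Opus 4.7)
The plan is to use the additivity of entropy decomposition \eqref{add} to split the relative entropy into a purely spatial part and a finite-dimensional part in the averages, bound each below by squared $L^1$-deviations, and recombine via the triangle inequality. Specifically, I would write
\begin{equation*}
\mathcal{E}(\cc) - \mathcal{E}(\cc_\infty) = \sum_{i=1}^I \int_\Omega c_i \log\frac{c_i}{\overline{c_i}}\,dx + \sum_{i=1}^I\left(\overline{c_i}\log\frac{\overline{c_i}}{c_{i,\infty}} - \overline{c_i} + c_{i,\infty}\right)
\end{equation*}
and estimate each sum separately.

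For the spatial part, I would apply the classical Pinsker inequality to each $i$. Since $|\Omega|=1$ and $c_i/\overline{c_i}$ is a probability density w.r.t.\ $dx$, Pinsker gives
\begin{equation*}
\int_\Omega \frac{c_i}{\overline{c_i}}\log\frac{c_i}{\overline{c_i}}\,dx \geq \frac{1}{2}\left(\int_\Omega\left|\frac{c_i}{\overline{c_i}} - 1\right|dx\right)^{\!2},
\end{equation*}
which, after multiplying by $\overline{c_i}$ and using $\overline{c_i}\le K$, becomes
$\int_\Omega c_i\log(c_i/\overline{c_i})\,dx \geq \tfrac{1}{2K}\|c_i - \overline{c_i}\|_{L^1(\Omega)}^2.$

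For the finite-dimensional part, I would observe that the function $\phi(x,a) := x\log(x/a) - x + a$ is strictly convex in $x$ with $\phi(a,a)=\phi'(a,a)=0$ and $\phi''(x,a)=1/x$. The quotient $\phi(x,c_{i,\infty})/(x-c_{i,\infty})^2$ extends continuously to $x=c_{i,\infty}$ with value $1/(2c_{i,\infty})$, and it is strictly positive everywhere on $[0,K]$ (including at $x=0$, where it equals $1/c_{i,\infty}$). Hence there is a constant $\kappa_i = \kappa_i(K,c_{i,\infty})>0$ with
\begin{equation*}
\overline{c_i}\log\frac{\overline{c_i}}{c_{i,\infty}} - \overline{c_i} + c_{i,\infty} \geq \kappa_i\,(\overline{c_i} - c_{i,\infty})^2 \qquad\text{for all } \overline{c_i}\in[0,K].
\end{equation*}

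Finally I would combine the two estimates: using $\|c_i - c_{i,\infty}\|_{L^1}^2 \leq 2\|c_i-\overline{c_i}\|_{L^1}^2 + 2|\overline{c_i}-c_{i,\infty}|^2$ (where the second term exploits $|\Omega|=1$), setting $C_{CKP} := \tfrac{1}{2}\min_i\min\{\tfrac{1}{2K},\kappa_i\}$ delivers the claimed inequality. The only mildly delicate point is verifying the uniformity of $\kappa_i$ near $\overline{c_i}=0$, which is settled by the continuous extension above; apart from that, the argument is fully elementary and requires no PDE input beyond the $L^1$-bound provided by Lemma \ref{bounded}.
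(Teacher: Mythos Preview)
Your argument follows the same skeleton as the paper's proof: the additivity decomposition \eqref{add}, classical Pinsker for the spatial part, a quadratic lower bound for the finite-dimensional part, and recombination via the triangle inequality. The only real difference is in the finite-dimensional step. Where you invoke continuity/compactness of $\phi(x,c_{i,\infty})/(x-c_{i,\infty})^2$ on $[0,K]$ to extract a positive constant $\kappa_i=\kappa_i(K,c_{i,\infty})$, the paper uses the explicit elementary inequality $x\log(x/y)-x+y\geq(\sqrt{x}-\sqrt{y})^2$ followed by $(\sqrt{\overline{c_i}}+\sqrt{c_{i,\infty}})^2\leq 4K$, arriving directly at the uniform constant $\tfrac{1}{4K}$. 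This matters slightly: the lemma as stated asserts that $C_{CKP}$ depends \emph{only} on $\Omega$ and $K$, whereas your $\kappa_i$ still carries $c_{i,\infty}$. You can close this gap either by citing the square-root inequality (already listed among the paper's elementary inequalities) or by noting that it implies $\kappa_i\geq 1/(4K)$ uniformly; with that adjustment your proof matches the paper's.
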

\begin{proof}
By recalling $|\Omega|=1$ and using the additivity of the relative entropy, we have
\begin{equation}\label{q1}								\mathcal{E}(\cc) - \mathcal{E}(\cc_{\infty}) =\sum_{i=1}^{I}\int_{\Omega}c_i\log{\frac{c_i}{\overline{c}_i}}dx + \sum_{i=1}^{I}\left(\overline{c}_i\log{\frac{\overline{c}_i}{c_{i,\infty}}} - \overline{c}_i + c_{i,\infty}\right).
\end{equation}
By using the classical Csisz\'{a}r-Kullback-Pinsker inequality, we have
\begin{equation}\label{q2}
\int_{\Omega}c_i\log{\frac{c_i}{\overline{c}_i}}dx \geq C_0\|c_i  - \overline{c}_i\|_{L^1(\Omega)}^2
\end{equation}
for all $i=1,2,\ldots, I$, where the constant $C_0$ depends only on the domain $\Omega$. On the other hand, by applying the elementary inequality $x\log(x/y) - x + y \geq (\sqrt{x} - \sqrt{y})^2$, we obtain
\begin{equation}\label{q3}
\begin{aligned}
\mathcal{E}(\overline{\cc}) - \mathcal{E}(\cc_{\infty}) &= \sum_{i=1}^{I}\left(\overline{c}_i\log{\frac{\overline{c}_i}{c_{i,\infty}}} - \overline{c}_i + c_{i,\infty}\right)
\geq \sum_{i=1}^{I}\left(\sqrt{\overline{c}_i} - \sqrt{c_{i,\infty}}\right)^2
= \sum_{i=1}^{I}\frac{(\overline{c}_i - c_{i,\infty})^2}{\left(\sqrt{\overline{c}_i} + \sqrt{c_{i,\infty}}\right)^2}\\
	&\geq \frac{1}{4K}\sum_{i=1}^{I}(\overline{c}_i - c_{i,\infty})^2,
	\end{aligned}
\end{equation}
where $K$ is given in Lemma \ref{bounded}.
By combining \eqref{q1}--\eqref{q3}, we obtain
\begin{equation*}
\begin{aligned}
\mathcal{E}(\cc) - \mathcal{E}(\cc_{\infty}) &\geq C_0\sum_{i=1}^{I}\|c_i  - \overline{c}_i\|_{L^1(\Omega)}^2 + \frac{1}{4K}\sum_{i=1}^{I}(\overline{c}_i - c_{i,\infty})^2\\
													&\geq \min\{C_0; 1/4K\}\sum_{i=1}^{I}\left(\|c_i  - \overline{c}_i\|_{L^1(\Omega)}^2 + \|\overline{c}_i - c_{i,\infty}\|_{L^1(\Omega)}^2\right)\\
&\geq \frac{1}{2}\min\{C_0; 1/4K\}\sum_{i=1}^{I}\|c_i - c_{i,\infty}\|_{L^1(\Omega)}^2,
\end{aligned}
\end{equation*}
 which is the desired inequality with $C_{CKP} = \frac{1}{2}\min\left\{C_0; \frac{1}{4K}\right\}$.
\end{proof}
The following entropy entropy-dissipation estimate was established in \cite{MHM}.
\begin{theorem}\cite{MHM}\label{MHM}
Assume that \eqref{SS}--\eqref{R} satisfies the assumption $(\mathbf{A1})$ and $(\mathbf{A2})$. 

Then, for a given positive initial mass vector $\M\in \mathbb{R}^m_+$ and for all $\cc\in L^1(\Omega; [0,+\infty)^{I})$ satisfying $\mathbb{Q}\,\overline{\cc} = \M$, there exists 
a positive constant $\lambda>0$ such that
	\begin{equation*}
	\mathcal{D}(\cc) \geq \lambda(\mathcal{E}(\cc) - \mathcal{E}(\cc_{\infty}))
	\end{equation*} 
with $\mathcal E (\cc)$ and $\mathcal D(\cc)$ given in \eqref{entropy} and \eqref{entropy_dissipation}, respectively, and where $\cc_{\infty}$ is the detailed balance equilibrium of \eqref{SS} corresponding to $\M$.
\end{theorem}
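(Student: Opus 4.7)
The plan is to reduce the infinite-dimensional EED inequality to a finite-dimensional one by combining the additivity of entropy \eqref{add} with the logarithmic Sobolev inequality \eqref{LSI}. Decomposing
\begin{equation*}
\mathcal{E}(\cc)-\mathcal{E}(\cc_\infty) = \bigl[\mathcal{E}(\cc)-\mathcal{E}(\overline{\cc})\bigr] + \bigl[\mathcal{E}(\overline{\cc})-\mathcal{E}(\cc_\infty)\bigr],
\end{equation*}
the spatial part $\mathcal{E}(\cc)-\mathcal{E}(\overline{\cc}) = \sum_i \int_\Omega c_i\log(c_i/\overline{c_i})\,dx$ is directly controlled by a fraction of the Fisher information in $\mathcal{D}(\cc)$ via \eqref{LSI}. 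It then remains to show a finite-dimensional EED estimate of the form
\begin{equation*}
\int_\Omega \sum_r k^r\bigl(\cc^{\aa^r}-\cc^{\bb^r}\bigr)\bigl(\log\cc^{\aa^r}-\log\cc^{\bb^r}\bigr)\,dx + \delta\sum_i d_i\int_\Omega \frac{|\nabla c_i|^2}{c_i}\,dx \;\geq\; \lambda_0\bigl[\mathcal{E}(\overline{\cc})-\mathcal{E}(\cc_\infty)\bigr]
\end{equation*}
for some small $\delta\in(0,1)$, where the reserved Fisher information serves to absorb the error produced when pulling the nonlinear reaction monomials inside the spatial average.

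Second, I would convexify the reaction dissipation. Although $\cc\mapsto(\cc^{\aa^r}-\cc^{\bb^r})(\log\cc^{\aa^r}-\log\cc^{\bb^r})$ is not jointly convex in $\cc$, after passing to logarithmic variables $\mu_i=\log(c_i/c_{i,\infty})$ and using the detailed balance relation $\cc_\infty^{\aa^r}=\cc_\infty^{\bb^r}$, each reaction summand takes the form
\begin{equation*}
\cc_\infty^{\aa^r}\bigl(e^{\aa^r\cdot\mm}-e^{\bb^r\cdot\mm}\bigr)\bigl(\aa^r\cdot\mm-\bb^r\cdot\mm\bigr),
\end{equation*}
which is convex in $\mm\in\mathbb{R}^I$ because $(a,b)\mapsto(e^a-e^b)(a-b)$ is. A Jensen-type argument, together with the reserved Fisher information to control the discrepancy between $\overline{\log c_i}$ and $\log\overline{c_i}$, bounds the spatial integral from below by the ODE reaction dissipation evaluated at the spatial average $\overline{\cc}$. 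At this stage the problem reduces to a purely finite-dimensional EED inequality on the compact sublevel set $\{\overline{\cc}\ge 0:\ \mathbb{Q}\,\overline{\cc}=\M,\ \mathcal{E}(\overline{\cc})\le C\}$.

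Third, the finite-dimensional estimate is established by a compactness and contradiction argument. Suppose a sequence $\{\overline{\cc_n}\}$ in the admissible set realises a ratio of the remaining reaction dissipation to $\mathcal{E}(\overline{\cc_n})-\mathcal{E}(\cc_\infty)$ tending to zero, while $\mathcal{E}(\overline{\cc_n})-\mathcal{E}(\cc_\infty)\geq\delta_0>0$. The $L^1$-bounds from Lemma \ref{bounded} together with the superlinearity of $x\log x$ yield a convergent subsequence with limit $\cc^*\ge 0$ satisfying $\mathbb{Q}\,\cc^*=\M$ and zero reaction dissipation, so $\cc^*$ is an equilibrium. Assumption $(\mathbf{A2})$ excludes a boundary equilibrium, and $(\mathbf{A1})$ together with the uniqueness of the positive detailed balance equilibrium forces $\cc^*=\cc_\infty$, so $\mathcal{E}(\cc^*)-\mathcal{E}(\cc_\infty)=0$, contradicting the lower bound $\delta_0>0$.

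The main obstacle is the passage to the limit in the nonlinear monomials $\cc^{\aa^r}$ under only an $L^1$ a priori bound, together with the delicate use of $(\mathbf{A2})$ to rule out limits lying on $\partial[0,+\infty)^I$. As the authors of \cite{MHM} emphasise, this approach is inherently non-constructive: it yields $\lambda>0$ without any quantitative lower bound, which is precisely the reason the present paper develops the explicit alternative via Theorem \ref{the:main} and the concrete systems in Theorems \ref{mainsingle}--\ref{mainChain}.
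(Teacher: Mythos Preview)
The paper does not prove Theorem~\ref{MHM}; it is quoted from \cite{MHM} as a prior result, and the surrounding text only comments that the convexification argument there, while elegant, does not yield explicit constants. So there is no ``paper's own proof'' to compare against.

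That said, your sketch is a fair summary of the strategy in \cite{MHM}: split the relative entropy via additivity, kill the spatial part with the logarithmic Sobolev inequality, convexify the reaction dissipation in the logarithmic variables $\mu_i=\log(c_i/c_{i,\infty})$ using that $(a,b)\mapsto(e^a-e^b)(a-b)$ is jointly convex, apply Jensen to reduce to spatial averages, and finish with a finite-dimensional compactness argument invoking $(\mathbf{A1})$ and $(\mathbf{A2})$. One small inaccuracy: you describe the main obstacle as ``passage to the limit in the nonlinear monomials $\cc^{\aa^r}$ under only an $L^1$ a priori bound'', but after the reduction to averages you are in $\mathbb{R}^I$ with $\overline{\cc_n}$ bounded, so the monomials are just continuous functions on a compact set and the limit passage is trivial. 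The genuine subtlety in \cite{MHM} lies earlier, in making the Jensen step rigorous (handling the gap between $\overline{\log c_i}$ and $\log\overline{c_i}$, which requires care when some $c_i$ is small), and in the non-quantitative nature of the contradiction argument near $\partial[0,+\infty)^I$---precisely the point where the present paper's constructive approach via Theorem~\ref{the:main} departs.
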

We emphasise that, though this Theorem gives the existence of $\lambda>0$, it seems difficult to extract an explicit estimate of $\lambda$ except in some special cases, e.g. the quadratic system arising from the reaction $2\mathcal{C}_1 \leftrightharpoons \mathcal{C}_2$. The main reason is the (highly elegant) convexification argument used in \cite{MHM}, which seems very difficult (if not impossible) to make explicit for general systems.
\medskip
				
In this paper, we propose a constructive method to prove an EED estimate. The method applies elementary estimates and has the advantage of allowing to explicitly estimate the rates and constants of convergence to equilibrium.
Before detailing our approach, we shall further remark on  assumption $(\mathbf{A2})$ on the absence of boundary equilibria.
				
\begin{remark}[Boundary equilibrium]\label{boundequi}
The validity of Theorem \ref{MHM} fails if the system \eqref{SS} has a boundary equilibrium. As example, consider the single reversible reaction $2\mathcal{A} \leftrightharpoons \mathcal{A}+\mathcal{B}$ with normalised reaction rate constants $k_f = k_b = 1$, which leads to the following system
\begin{equation*}\label{2x2}
\begin{cases}
a_t - d_a\Delta a = -a^2 + ab, &\quad x\in\Omega, \quad t>0,\\
b_t - d_b\Delta b = a^2 - ab, &\quad x\in\Omega, \quad t>0,\\
\nabla a \cdot \nu = \nabla b\cdot \nu = 0, &\quad x\in\partial\Omega, \quad t>0,\\
a(x,0) = a_0(x), \; b(x,0) = b_0(x), &\quad x\in\Omega,
\end{cases}
\end{equation*}
and where we assume that $d_a, d_b>0$. This system has one mass conservation law
	\begin{equation*}
	\int_{\Omega}(a(x,t) + b(x,t))dx = \int_{\Omega}(a_0(x) + b_0(x))dx =: \M>0 \qquad \forall t>0.
\end{equation*}
It is easy to see that the system possesses the positive detailed balance equilibrium $(a_{\infty}, b_{\infty}) = \left(\frac{\M}{2}, \frac{\M}{2}\right)$ and the boundary equilibrium $(a^*_{\infty}, b^*_{\infty}) = (0, \M)$. 
Moreover, we have the entropy functional
	\begin{equation*}
	\mathcal{E}(a,b) = \int_{\Omega}(a\log a - a + 1)dx + \int_{\Omega}(b\log b - b + 1)dx
	\end{equation*}
and the entropy-dissipation functional
\begin{equation*}
\mathcal{D}(a,b) = \int_{\Omega}d_a\frac{|\nabla a|^2}{a}dx + \int_{\Omega}d_b\frac{|\nabla b|^2}{b}dx + \int_{\Omega}a(a-b)(\log a - \log b)dx.
\end{equation*}
By defining $Z = \{(a,b)\in \mathbb{R}_+^2: a + b = \M\}$, we can easily compute that
\begin{equation*}
	\lim\limits_{Z \ni(a, b)\rightarrow (a^*_{\infty}, b^*_{\infty})}\mathcal D(a,b) = 0
\end{equation*}
	and
\begin{equation*}
\lim\limits_{Z\ni (a,b)\rightarrow (a^*_{\infty}, b^*_{\infty})}(\mathcal{E}(a,b) - \mathcal{E}(a_{\infty}, b_{\infty})) = \M\log 2 >0.
\end{equation*}
As a consequence, {\normalfont there does not exist a constant $\lambda>0$} such that
\begin{equation*}
\mathcal{D}(a,b) \geq \lambda(\mathcal{E}(a,b) - \mathcal{E}(a_{\infty}, b_{\infty}))
\end{equation*}
for all functions $a, b: \Omega \rightarrow \mathbb{R}_+$ satisfying $\int_{\Omega}(a(x) + b(x))dx = \M$.

\medskip							
Therefore, if \eqref{SS}--\eqref{R} has a boundary equilibrium then we cannot expect in general global exponential convergence to equilibrium. We can only expect exponential convergence of trajectories, which are uniformly bounded away from the boundary equilibrium.
Interestingly, it is conjectured in the case of ODE reaction systems that for complex balanced reaction systems, even if the system possesses boundary equilibria, a trajectory starting from any other positive initial state will always converge to the unique positive equilibrium as time goes to infinity. This conjecture is given the name {\normalfont Global Attractor Conjecture} in \cite{CrDietal09} and is considered to be one of the most important open problems in the theory of chemical reaction networks. A recent proposed proof of the conjecture in ODE setting is under verification \cite{Cra15}.

For PDE systems featuring boundary equilibria, we refer the reader to \cite{DFT16,GH97}.
\end{remark}
				
\subsection{A constructive method to prove the EED estimate}\label{subsec:2.2}
Although Theorem \ref{MHM} provides the existence of $\lambda>0$ satisfying the entropy entropy-dissipation estimate, it does not seem to allow for explicit estimates on $\lambda$ when the reaction network has more than two substances, for example,
\begin{equation*}
\alpha \mathcal{C}_1 + \beta \mathcal{C}_2 \leftrightharpoons \gamma \mathcal{C}_3 \qquad \text{ or } \qquad \mathcal{C}_1 + \mathcal{C}_2 \leftrightharpoons \mathcal{C}_3 + \mathcal{C}_4.
	\end{equation*}
Inspired by \cite{DeFe08, DFIFIP, BFL,FL}, we propose a general approach to prove an entropy entropy-dissipation estimate based on the mass conservation laws of a reaction-diffusion system of the form \eqref{SS}--\eqref{R}, which allows explicit estimates of the rates and constants of convergence. 

By recalling the key EED estimate
				\begin{equation}\label{MainE1}
							\mathcal{D}(\cc) \geq \lambda(\mathcal{E}(\cc) - \mathcal{E}(\cc_{\infty})),
\end{equation}
with $\mathcal{E}(\cc)$ and $\mathcal{D}(\cc)$ are given in \eqref{entropy} and \eqref{entropy_dissipation}, we point out that the right hand side is zero if and only if $\cc \equiv \cc_{\infty}$, while the left hand side is zero for all constant states $\cc^*\in (0,+\infty)^{I}$ satisfying $(\cc^*)^{\boldsymbol{\alpha}^r} = (\cc^*)^{\boldsymbol{\beta}^r}, \; \forall r=1,2,\ldots, R$ and that $\cc^*$ identifies with $\cc_{\infty}$ if and only if $\mathbb{Q}\,\cc^* = \M$. Hence, the EED estimate 
\eqref{MainE1} has to crucially take into account all the conservations laws of the system.
\medskip

\begin{remark}[Explicit constants]\label{rem:explicit}
We remark that the rates obtained by our approach are not optimal. Therefore, for the sake of readability, we shall denote at some places by $K_i$ constants, which can be estimated explicitly, but for which we don't state unnecessary long expressions.
\end{remark}

The aim of this subsection is to prove Theorem \ref{the:main}. Once this theorem is proved, it remains only to prove the finite dimensional inequality \eqref{ODEEqui} and the lower bound for entropy dissipation \eqref{FarEqui} in order to apply Theorem 
\ref{the:main} in the proofs of the Theorems \ref{mainsingle} and \ref{mainChain} as presented in Sections \ref{sec:3} and \ref{sec:4}. 
There, the proofs of the inequalities \eqref{ODEEqui} and \eqref{FarEqui} are crucially based on the structure of the mass conservation laws 
\eqref{MassSingle} and \eqref{MassChain}, respectively. 

\bigskip

We first prove the following intermediate lemma.
\begin{lemma}\label{lem:intermediate}
Consider the general mass-action law reaction-diffusion system
\eqref{SS}--\eqref{R} with the associated entropy 
and entropy-dissipation functionals \eqref{entropy} and \eqref{entropy_dissipation}.

Assume that there exists a constant $H_6$ such that 
	\begin{equation}\label{intermediate} 
		\sum_{i=1}^{I}\|\nabla C_i\|^2 + \sum_{r=1}^{R}\left(\overline{\mathbf C}^{\boldsymbol{\alpha} ^r} - \overline{\mathbf C}^{\boldsymbol{\beta} ^r}\right)^2
		\geq H_6\sum_{i=1}^{I}\left(\sqrt{\overline{C_i^2}} - C_{i,\infty}\right)^2,
\end{equation}
where we recall the short hand notation $C_i = \sqrt{c_i}$, $C_{i,\infty} = \sqrt{c_{i,\infty}}$, e.t.c.

Then,{ for all non-negative $\cc\in L^1(\Omega; [0,+\infty)^{I})$ with $\overline{c_i} \leq K$}, there holds
\begin{equation*}
\mathcal D(\cc) \geq \lambda(\mathcal{E}(\cc) - \mathcal{E}(\ww))
\end{equation*}
for $\mathcal{E}(\cc)$ and $\mathcal{D}(\cc)$ as given in \eqref{entropy} and \eqref{entropy_dissipation} and
for a constant $\lambda >0$ depending {\normalfont{explicitly}} on $H_6$ and $K$,  the domain $\Omega$, the diffusion coefficients $d_i$, the stoichiometric coefficients $\aa^r, \bb^r$, the reaction rate constants $k_{r}$, and the initial mass vector $\M$.
\end{lemma}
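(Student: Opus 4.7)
The strategy is to split the relative entropy via the additivity identity \eqref{add} into an inhomogeneous part $\mathcal{E}(\cc) - \mathcal{E}(\ovv{\cc})$ and a homogeneous part $\mathcal{E}(\ovv{\cc}) - \mathcal{E}(\ww)$, dominate the former by the diffusive contribution of $\mathcal{D}(\cc)$ through the logarithmic Sobolev inequality, and dominate the latter by applying the hypothesis \eqref{intermediate} after transforming the reaction contribution of $\mathcal{D}(\cc)$ into the averaged quantities appearing on the left-hand side of \eqref{intermediate}. A small fraction of the diffusive budget must be reserved to bridge averages of products with products of averages.

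\textbf{Inhomogeneous part.} Fix a small parameter $\theta\in (0,1)$ to be chosen later. Splitting the diffusion coefficient as $d_i = \theta d_i + (1-\theta)d_i$ and applying \eqref{LSI} componentwise yields
\begin{equation*}
\theta\sum_{i=1}^{I}d_i\int_\Omega\frac{|\nabla c_i|^2}{c_i}\,dx \;\geq\; \theta\,d_{min}\,C_{LSI}\bigl(\mathcal{E}(\cc) - \mathcal{E}(\ovv{\cc})\bigr),
\end{equation*}
which handles the inhomogeneous piece of the relative entropy. The complementary $(1-\theta)$-portion is rewritten as $4(1-\theta)\sum_i d_i\|\nabla C_i\|^2 \ge 4(1-\theta)d_{min}\sum_i \|\nabla C_i\|^2$ and reserved for the next step.

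\textbf{Reaction part and transfer estimate.} The elementary inequality $(a-b)(\log a - \log b)\ge 4(\sqrt a - \sqrt b)^2$ combined with Jensen's inequality (recall $|\Omega|=1$) gives
\begin{equation*}
\sum_{r=1}^{R}k^{r}\!\int_\Omega (\cc^{\aa^r} - \cc^{\bb^r})\log\frac{\cc^{\aa^r}}{\cc^{\bb^r}}\,dx \;\geq\; 4 k_{min}\sum_{r=1}^{R}\bigl(\ovv{\mathbf C^{\aa^r}} - \ovv{\mathbf C^{\bb^r}}\bigr)^{2}.
\end{equation*}
Since \eqref{intermediate} involves products of averages $\bigl(\ovv{\mathbf C}^{\aa^r} - \ovv{\mathbf C}^{\bb^r}\bigr)^2$ rather than averages of products, the crucial intermediate step is a \emph{transfer estimate} of the form
\begin{equation*}
\bigl(\ovv{\mathbf C^{\aa^r}} - \ovv{\mathbf C}^{\aa^r}\bigr)^2 \;\leq\; K_1 \sum_{i=1}^{I}\|\nabla C_i\|^2,
\end{equation*}
with $K_1=K_1(K,\aa^r)$ explicit. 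I would derive this by writing $\mathbf C^{\aa^r} - \ovv{\mathbf C}^{\aa^r}$ as a telescoping product, estimating the polynomial factors in $C_i$ via the a-priori bound $\ovv{C_i^2} = \ovv{c_i}\leq K$ of Lemma \ref{bounded}, and controlling the remainder factor $C_i - \ovv{C_i}$ by Poincar\'e's inequality. Young's inequality then yields $\bigl(\ovv{\mathbf C^{\aa^r}} - \ovv{\mathbf C^{\bb^r}}\bigr)^2 \geq \tfrac12\bigl(\ovv{\mathbf C}^{\aa^r} - \ovv{\mathbf C}^{\bb^r}\bigr)^2 - 2K_1\sum_i \|\nabla C_i\|^2$, and choosing $\theta$ small enough so that the reserved $(1-\theta)$-diffusion budget absorbs the $2K_1$-error leaves
\begin{equation*}
\mathcal{D}(\cc)\;\geq\; C_1\bigl(\mathcal{E}(\cc) - \mathcal{E}(\ovv{\cc})\bigr) + C_2\sum_{i=1}^{I}\|\nabla C_i\|^2 + C_3\sum_{r=1}^{R}\bigl(\ovv{\mathbf C}^{\aa^r} - \ovv{\mathbf C}^{\bb^r}\bigr)^2,
\end{equation*}
with explicit positive $C_1,C_2,C_3$.

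\textbf{Conclusion and main obstacle.} Applying \eqref{intermediate} to the last two terms produces a lower bound proportional to $H_6\sum_i\bigl(\sqrt{\ovv{c_i}} - C_{i,\infty}\bigr)^2$. Using the identity $\ovv{c_i}\log(\ovv{c_i}/c_{i,\infty}) - \ovv{c_i} + c_{i,\infty} = \Phi(\ovv{c_i}/c_{i,\infty})\bigl(\sqrt{\ovv{c_i}}-\sqrt{c_{i,\infty}}\bigr)^2$ with $\Phi$ as in \eqref{Phi}, together with the bound $\ovv{c_i}\leq K$ of Lemma \ref{bounded} (so that $\Phi(\ovv{c_i}/c_{i,\infty})\leq \Phi(K/\min_i c_{i,\infty})=:K_2$), one obtains $\mathcal{E}(\ovv{\cc}) - \mathcal{E}(\ww)\le K_2\sum_i \bigl(\sqrt{\ovv{c_i}} - \sqrt{c_{i,\infty}}\bigr)^2$. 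Combining with the $C_1$-term handling the inhomogeneous part, and invoking the additivity \eqref{add}, yields the assertion with $\lambda = \min\bigl(C_1,\;\min(C_2,C_3)\,H_6/K_2\bigr)$. The main obstacle is the transfer estimate: controlling averages of products by products of averages through the Fisher information is delicate because $C_i$ is only bounded in $L^2$ (not $L^\infty$) and the stoichiometric exponents $\alpha_i^r\geq 1$ are not necessarily integers; moreover, the splitting parameter $\theta$ must be chosen quantitatively so that the Young-type error from the transfer step is fully absorbed while still leaving positive remaining constants $C_2, C_3$.
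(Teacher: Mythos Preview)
Your overall architecture---the additivity splitting \eqref{add}, the LSI for the inhomogeneous part, the elementary bound $(a-b)\log(a/b)\ge 4(\sqrt a-\sqrt b)^2$, and the $\Phi$-function estimate for $\mathcal{E}(\ovv{\cc})-\mathcal{E}(\ww)$---matches the paper's Steps~1 and~2 exactly. The gap is precisely where you flag it: the transfer estimate. Your proposed telescoping bound $(\ovv{\mathbf C^{\aa^r}}-\ovv{\mathbf C}^{\aa^r})^2\le K_1\sum_i\|\nabla C_i\|^2$ cannot be closed with only the available $L^2$ control $\ovv{C_i^2}\le K$: the telescoping produces cross-factors of the type $C_i^{\alpha_i^r-1}\prod_{k>i}C_k^{\alpha_k^r}$, which have no a-priori bound in the required Lebesgue space; already $\alpha_i^r=2$ forces $C_i\in L^4$, which is not available.

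The paper's resolution (Lemma~\ref{average}) is a domain-decomposition argument that, crucially, does \emph{not} apply Jensen first. It retains the full norm $\|\mathbf C^{\aa^r}-\mathbf C^{\bb^r}\|^2$ and splits $\Omega=S\cup S^\perp$ with $S=\{x:|\delta_i(x)|\le L\ \forall i\}$. On $S$ every $C_i=\ovv{C_i}+\delta_i$ is \emph{pointwise} bounded by $\sqrt K+L$, so a Taylor expansion of $\prod_i(\ovv{C_i}+\delta_i)^{\alpha_i^r}$ has uniformly bounded remainders and yields $\|\mathbf C^{\aa^r}-\mathbf C^{\bb^r}\|_{L^2(S)}^2\ge\tfrac12|S|\,(\ovv{\mathbf C}^{\aa^r}-\ovv{\mathbf C}^{\bb^r})^2-C(L,K)\sum_i\|\delta_i\|_{L^2(S)}^2$, the error being absorbed by a small multiple of $\sum_i\|\nabla C_i\|^2$ via Poincar\'e. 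On $S^\perp$, Chebyshev gives $|S^\perp|\le L^{-2}\sum_i\|\delta_i\|^2$, and since $\ovv{\mathbf C}^{\aa^r}\le K^{|\aa^r|/2}$ is a fixed number, the contribution $(\ovv{\mathbf C}^{\aa^r}-\ovv{\mathbf C}^{\bb^r})^2|S^\perp|$ is absorbed by the gradient budget for $L$ large. Your Jensen-first route discards exactly the pointwise $L^2(S)$-structure needed for the good-set estimate: after Jensen one is left having to bound $\int_{S^\perp}\mathbf C^{\aa^r}$, which is not controlled by the $L^1$ bounds alone.
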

\begin{proof}
The proof of this lemma is devided into three steps. 
\begin{description} 
\item[{\bf Step 1 (Use of the Logarithmic Sobolev Inequality)}]\hfill\\
The idea of this step is to divide the relative entropy $\mathcal{E}(\cc) - \mathcal{E}(\cc_{\infty})$ into two parts
by using the additivity properties of the entropy \eqref{add}:
\begin{equation}\label{add2}
\begin{aligned}
\mathcal{E}(\cc) - \mathcal{E}(\cc_{\infty})&= \left(\mathcal{E}(\cc) - \mathcal{E}(\overline{\cc})\right) + \left(\mathcal{E}(\overline{\cc}) - \mathcal{E}(\cc_{\infty})\right)\\
&= \sum_{i=1}^{I}\int_{\Omega}c_i\log{\frac{c_i}{\overline{c_i}}}dx + \sum_{i=1}^{I}\left(\overline{c_i}\log{\frac{\overline{c_i}}{c_{i,\infty}}} - \overline{c_i} + c_{i,\infty}\right),\\
\end{aligned}
\end{equation}
where the first part is controlled in terms (of a part of) the Fisher information in the entropy dissipation via the lower bounds $d_i \geq d_{min}$ and the Logarithmic Sobolev Inequality, i.e. 
\begin{equation*}
\int_{\Omega}d_i\frac{|\nabla c_i|^2}{c_i}dx \geq C_{LSI} \,d_{min}\int_{\Omega}c_i\log{\frac{c_i}{\overline{c_i}}}dx, \qquad 
\forall i=1,\ldots,I
\end{equation*}
and the second term on the right hand side of \eqref{add2} depends only on spatial averages of concentrations,
which have the two advantages of obeying the conservation laws and of satisfying the natural uniform a-priori bounds of Lemma \ref{bounded}. 
										
In particular, the Logarithmic Sobolev Inequality allows to estimate 
\begin{equation}\label{step1}
\frac{1}{2}\mathcal{D}(\cc) \geq \frac{1}{2}C_{LSI}\,d_{min}(\mathcal{E}(\cc) - \mathcal{E}(\overline{\cc})).
\end{equation}
\vskip .5cm
\item[{\bf Step 2 (Transformation into quadratic terms for the square roots of the concentrations)}]\hfill\\
We estimate (the remaining part of) $\mathcal{D}(\cc)$ as well as $\mathcal{E}(\ovv{\cc}) - \mathcal{E}(\cc_{\infty})$ in terms of $L^2$-distance of the square roots $C_i$ of the concentrations $c_i$.
The associated $L^2$-norms are significantly easier to handle than logarithmic terms. For $\mathcal{D}(\cc)$ we estimate
\begin{align}
\frac 12\mathcal{D}(\cc) &= \frac 12\sum_{i=1}^{I}\int_{\Omega}d_i\frac{|\nabla c_i|^2}{c_i}dx + \frac 12\sum_{r=1}^{R}k^r\int_{\Omega}(\cc^{\boldsymbol{\alpha} ^r} - \cc^{\boldsymbol{\beta} ^r})(\log \cc^{\boldsymbol{\alpha} ^r} - \log \cc^{\boldsymbol{\beta} ^r})dx\nonumber\\
&\geq 2d_{min}\sum_{i=1}^{I}\|\nabla C_i\|^2 + 2\sum_{r=1}^{R}k^r\left\|\mathbf C^{\boldsymbol{\alpha} ^r} - \mathbf C^{\boldsymbol{\beta} ^r}\right\|^2\nonumber \\
&\geq K_1\left(\sum_{i=1}^{I}\|\nabla C_{i}\|^2 + \sum_{r=1}^{R}\left\|\mathbf C^{\aa^r} - \mathbf C^{\bb^r}\right\|^2\right),
\label{g16}
\end{align}
with
\begin{equation}\label{K1}
	K_1 = 2\min\{d_{min}, \min_{r=1,\ldots, R}\{k^r\}\},
\end{equation}
and we recall the abbreviations $C_i = \sqrt{c_i}$, $\cc = (C_1,C_2,\ldots, C_I)^{\top}$ and the elementary inequality $(a-b)(\log a - \log b) \geq 4(\sqrt{a} - \sqrt{b})^2$. 
										
For the second terms on the right hand side of \eqref{add2}, we use the function $\Phi$ in \eqref{Phi} to estimate
\begin{equation}\label{g17}
\begin{aligned}
\mathcal{E}(\overline{\cc}) - \mathcal{E}(\cc_{\infty}) &= \sum_{i=1}^{I}\left(\overline{c_i}\log{\frac{\overline{c_i}}{c_{i,\infty}}} - \overline{c_i} + c_{i,\infty}\right)
= \sum_{i=1}^{I}\Phi\left(\frac{\overline{c_i}}{c_{i,\infty}}\right)\left(\sqrt{\overline{c_i}} - \sqrt{c_{i,\infty}}\right)^2\\
&\leq K_2\sum_{i=1}^{I}\left(\sqrt{\overline{C_i^2}} - C_{i,\infty}\right)^2
\end{aligned}
\end{equation}
with
\begin{equation}\label{K2}
K_2 = \max_{i=1,\ldots, I}\left\{\Phi\left(\frac{K}{c_{i,\infty}}\right)\right\},
\end{equation}
where we have used Lemma \ref{bounded} to estimate $\overline{c_i} \leq K$ for all $i=1,\ldots, I$, and the monotonicity of the function $\Phi$. 

\vskip .5cm

\item[{\bf Step 3 (Control of reaction dissipation term by a reaction dissipation term for averages)}]
The estimate \eqref{g16} is a proper lower bound of the 
entropy dissipation $\mathcal{D}(\cc)$ in the sense 
that all the conservation laws are required in order to see that the right hand side of \eqref{g16} is zero only at the equilibrium $\CC_{\infty}$. In this step, we continue  to estimate below the right hand side of \eqref{g16} in terms of the left hand side of \eqref{intermediate}.


Indeed, the two terms on right hand side of \eqref{g16} represents 
lower bounds for the entropy-dissipation caused by all the diffusion and reaction processes of the considered system. In order to be able to use the constraints provided by the conservation laws, we shall bound the reaction dissipation term  below by a reaction dissipation term for spatially averaged concentrations. More precisely, by denoting $\overline{\mathbf C} = (\overline{C_1}, \ldots, \overline{C_I})^{\top}$, we have
\begin{lemma}[Reaction dissipation terms for averaged concentrations]\label{average}\hfill\\
{Under the assumptions of Lemma \ref{lem:intermediate}}, there exists an explicit constant $K_3>0$ such that
		\begin{equation}\label{g19}
			\quad 2\|\nabla C_i\|^2 + 2\sum_{r=1}^{R}\left\|\mathbf C^{\boldsymbol{\alpha} ^r} - \mathbf C^{\boldsymbol{\beta} ^r}\right\|^2
			\geq K_3\Bigl(\sum_{i=1}^{I}\left\|\nabla C_i\right\|^2 + \sum_{r=1}^{R}\left(\overline{\mathbf C}^{\boldsymbol{\alpha} ^r} - \overline{\mathbf C}^{\boldsymbol{\beta} ^r}\right)^2\Bigr).
		\end{equation}
\end{lemma}
\noindent 
It's worth noticing that comparing to a related estimate in \cite{DeFe08}, where the considered quadratic nonlinearities allowed to exploit certain $L^2$-orthogonality structures,  Lemma \ref{average} is significantly more complicated due to the arbitrary order of the considered nonlinearities. In the proof of Lemma \ref{average}, we shall introduce some new ideas, which are motivated by \cite{FL} and consist of a domain decomposition 
to overcome the difficulties caused by the general nonlinearities. This idea is also applicable to volume-surface reaction-diffusion systems, see \cite{BFL}. 
\begin{proof}[Proof of Lemma \ref{average}]
We first prove that 
						\begin{equation} \label{g19'}
									\sum_{i=1}^{I}\|\nabla C_i\|^2 + 2\sum_{r=1}^{R}\left\|\mathbf C^{\boldsymbol{\alpha} ^r} - \mathbf C^{\boldsymbol{\beta} ^r}\right\|^2\\
									\geq \kappa\sum_{r=1}^{R}\left(\overline{\mathbf C}^{\boldsymbol{\alpha} ^r} - \overline{\mathbf C}^{\boldsymbol{\beta} ^r}\right)^2
						\end{equation}
for an explicit constant $\kappa>0$. Then, \eqref{g19} follows by choosing 
$$
K_3 = \min\{1, \kappa\}.
$$
The proof of \eqref{g19'} introduces pointwise deviations of the concentrations around their spatial averages, which are defined as follows: for all $i = 1, 2, \ldots, I$, we set
\begin{equation*}\label{deviation}
	\delta_i(x) = C_i(x) - \overline{C}_i,\qquad \text{ for } x\in\Omega.
\end{equation*}
Thanks to the non-negativity of $C_i$ and Lemma \ref{bounded}, we have that $\delta_i \in [-\sqrt{K}; +\infty)$. Fixing a constant $L>\sqrt{K}>0$, we can decompose $\Omega$ as
							\begin{equation*}\label{DecomposeOmega}
										\Omega = S \cup S^{\perp},
							\end{equation*}
							where
							\begin{equation*}\label{S}
										S = \{x\in\Omega: |\delta_i(x)| \leq L, \quad \forall i = 1,2, \ldots, I\}.
							\end{equation*} 
We will prove \eqref{g19'} on both $S$ and $S^{\perp}$. On $S$ we estimate for a $\gamma\in (0,2)$
by using Taylor expansion (i.e. $\prod_{i=1}^{I}\left(\overline{C}_i+\delta_i\right)^{\alpha^r_i} = \overline{\mathbf C}^{\boldsymbol{\alpha}^r} +\sum_{i=1}^{I} R_i( \overline{\mathbf C},\aa^r, \mathbf{\delta})\delta_i$ with bounded remainders $|R_i( \overline{\mathbf C},\aa^r, \mathbf{\delta})|\le C(K,\boldsymbol{\alpha}^r,L)$) and Young's inequality that
\begin{align}\label{e14}										
\gamma\sum_{r=1}^{R}\left\|\mathbf C^{\boldsymbol{\alpha}^r} - \mathbf C^{\boldsymbol{\beta}^r}\right\|^2_{L^2(S)}
&= \gamma\sum_{r=1}^{R}\left\|\prod\limits_{i=1}^{I}\left(\overline{C}_i+\delta_i\right)^{\alpha^r_i} - \prod\limits_{i=1}^{I}\left(\overline{C}_i+\delta_i\right)^{\beta^r_i}\right\|^2_{L^2(S)}\\
&\geq \frac 12 \gamma\sum_{r=1}^{R}\left[\overline{\mathbf C}^{\boldsymbol{\alpha}^r} - \overline{\mathbf C}^{\boldsymbol{\beta}^r}\right]^2|S|
- \gamma\, C(L, K, \aa^r, \bb^r)\sum_{i=1}^{I}\|\delta_i\|^2_{L^2(S)},
\end{align}
where $C(L, K, \aa^r, \bb^r)$ is an explicit constant which {\it does not} depend on $S$. On the other hand, by using the Poincar$\acute{\mathrm{e}}$ inequality,
							\begin{equation*}\label{e15}
								\|\nabla f\|^2 \geq C_P\|f - \overline{f}\|^2 \geq C_P\|f - \overline{f}\|_{L^2(S)}^2,
							\end{equation*}
							we have
							\begin{equation}\label{e16}
										\begin{aligned}
													\frac 12\sum_{i=1}^{I}\|\nabla C_i\|^2 \geq \frac 12C_P\sum_{i=1}^{I}\|\delta_i\|^2 \geq \frac 12C_P\sum_{i=1}^{I}\|\delta_i\|^2_{L^2(S)}.
										\end{aligned}
							\end{equation}
							From \eqref{e14} and \eqref{e16}, if we choose $\gamma \in (0,2)$ such that $2\gamma C(L, K, \aa^r, \bb^r) \leq C_{P}$, then we have
							\begin{equation}\label{e17}
										\frac 12\sum_{i=1}^{I}\|\nabla C_i\|^2 + 2\sum_{r=1}^{R}\left\|\mathbf C^{\boldsymbol{\alpha}^r} - \mathbf C^{\boldsymbol{\beta}^r}\right\|^2_{L^2(S)} 
										\geq \frac 12 \gamma\sum_{r=1}^{R}\left[\overline{\mathbf C}^{\boldsymbol{\alpha}^r} - \overline{\mathbf C}^{\boldsymbol{\beta}^r}\right]^2|S|.
							\end{equation}
To estimate \eqref{g19'} on $S^{\perp}$, we note that by the definition of $S^{\perp}$ and $L$, we have
\begin{equation*}\label{e18}
S^{\perp} = \{x\in\Omega: \quad \delta_i(x) > L \ \text{ for some } \ i = 1,2 \ldots, I\}.
\end{equation*}
							Hence,
							\begin{equation}\label{e19}
										\begin{aligned}
													|S^{\perp}| &\le \sum_{i=1}^{I}\left|\{x\in \Omega: \delta_i(x)>L\}\right|\le \sum_{i=1}^{I}\left|\{x\in\Omega: \delta^2_i(x) > L^2\}\right|\\
													&\leq \frac{1}{L^2}\sum_{i=1}^{I}\|\delta_i\|^2
													\leq \frac{1}{L^2C_P}\sum_{i=1}^{I}\|\nabla C_i\|^2.
										\end{aligned}
							\end{equation}
							By making use of the a priori bounds $\overline{C_i}\leq \sqrt{\overline{C_i^2}} \leq \sqrt{K}$ from Lemma \ref{bounded}, we can estimate the right hand side of \eqref{g19'} integrated over $S^{\perp}$ as follows
\begin{equation}\label{e20}
\begin{aligned}
\sum_{r=1}^{R}\left[\overline{\mathbf C}^{\boldsymbol{\alpha}^r} - \overline{\mathbf C}^{\boldsymbol{\beta}^r}\right]^2|S^{\perp}| 
&\leq C(\sqrt{K})\left|S^{\perp}\right| \leq \frac{C(K, \aa^r, \bb^r)}{L^2C_P}\sum_{i=1}^{I}\|\nabla C_i\|^2 \qquad (\text{use } \eqref{e19})\\
													&\leq \frac{1}{2}\sum_{i=1}^{I}\|\nabla C_i\|^2,
										\end{aligned}
							\end{equation}
if we choose $L$ to be big enough, e.g. $L^2 \geq \frac{2C(K, \aa^r, \bb^r)}{C_P}$. By combining \eqref{e17} and \eqref{e20} we obtain \eqref{g19'} with $\kappa = \frac{1}{2}\min\{1, \gamma\}$.
This ends the proof of Lemma \ref{average}.			\end{proof}

\end{description}
\medskip

\noindent\textit{Continuation of the proof of Lemma \ref{lem:intermediate}}\hfill\\
 Now, by combining the estimates \eqref{g16}, \eqref{g17} and \eqref{g19}, and the assumption \eqref{intermediate} we obtain
\begin{equation}\label{step3}
	\frac 12 \mathcal{D}(\cc) \geq \frac{K_1K_3H_6}{2K_2}\left(\mathcal{E}(\overline{\cc}) - \mathcal{E}(\ww)\right).
\end{equation}
Therefore, we finally obtain the desired entropy entropy-dissipation estimate $\mathcal{D}(\cc) \geq \lambda(\mathcal{E}(\cc) - \mathcal{E}(\ww))$ from \eqref{add2}, \eqref{step1} and \eqref{step3} with
\begin{equation*}
	\lambda = \frac 12 \min\left\{C_{LSI}d_{min}; \frac{K_1K_3H_6}{K_2}\right\}
\end{equation*}
where $K_1$ and $K_2$ are defined in \eqref{K1} and \eqref{K2} respectively, and $K_3$ is in Lemma \ref{average}.
\end{proof}
We are now ready to prove Theorem \ref{the:main}.
\begin{proof}[Proof of Theorem \ref{the:main}]\hfill\\
Thanks to Lemma \ref{lem:intermediate}, it remains to prove \eqref{intermediate} under the assumptions \eqref{ODEEqui} and \eqref{FarEqui}.
%
%
In order to do that, we exploit the ansatz 
\begin{equation}\label{g21}
\overline{C_i^2} = C_{i,\infty}^2(1+\mu_i)^2 \quad \text{with}\quad \mu_i \in [-1,\infty)\quad\text{ for all } \quad i=1,\ldots, I,
\end{equation}
or equivalently
\begin{equation*}
\overline{\mathbf C^2} = \mathbf C_{\infty}^2(\mathbf{1}+\boldsymbol{\mu} )^2
\end{equation*}
with $\mathbf{1} = (1,1,\ldots, 1)^{\top}\in \mathbb{R}^I$, $\boldsymbol{\mu}  = (\mu_1,\ldots, \mu_I)^{\top}$, and we use the notation $\mathbf C_{\infty}^2(\mathbf{1}+\boldsymbol{\mu} )^2 = (C_{i,\infty}^2(\mu_i^2 + 2\mu_i))_{i=1,\ldots, I}^{\top}$.
Note that $\overline{C_i^2} = \overline{c_i} \leq K$ (see Lemma \ref{bounded}), thus
\begin{equation*}
	-1\leq \mu_i \leq \mu_{max}< +\infty \quad \text{ for all } i=1,\ldots, I.
\end{equation*}
 By recalling that $\mathbf C^2 = \cc$ and $\mathbf C_{\infty}^2 = \cc_{\infty}$ and $\mathbb{Q}\,\overline{\cc} = \mathbb{Q}\,\cc_{\infty} = \M$, we have moreover the following algebraic constraints between the $\mu_1, \ldots, \mu_I$:
\begin{equation*}
\mathbb{Q}\,\mathbf C_{\infty}^2(\mathbf{1}+\boldsymbol{\mu} )^2 = \mathbb{Q}\,\mathbf C_{\infty}^2
\end{equation*}
or equivalently
\begin{equation*}\label{g21_1}
\mathbb{Q}\,\mathbf C_{\infty}^2(\boldsymbol{\mu}^2 + 2\boldsymbol{\mu}) = 0.
\end{equation*}
By denoting $\delta_i(x) = C_i(x) - \overline{C}_i$ the deviation of $C_i(x)$ to its average and by using \eqref{g21}, it follows from $\|\delta_i\|^2 = \overline{C_i^2} - \overline{C_i}^2$ that
									\begin{equation}\label{g22}
												\overline{C_i} = \sqrt{\overline{C_i^2}} - \frac{\|\delta_i\|^2}{\sqrt{\overline{C_i^2}} + \overline{C_i}} =: C_{i,\infty}(1+\mu_i) - \|\delta_i\|^2R(C_i),
									\end{equation}
where we denote $R(C_i) := \Bigl(\sqrt{\overline{C_i^2}} + \overline{C_i}\Bigr)^{-1}$ for all $i=1,\ldots, I$. We observe that $R(C_i)$ becomes unbounded when $\overline{C_i^2}\ge \overline{C_i}^2$ approaches zero. The possible occurrence of  such degenerate states (with arbitrarily little mass $\overline{C_i^2}$ 
of the species $\mathcal{A}_i$ being present)  
prevents a uniform use of the ansatz \eqref{g22}. 
Therefore, we have to distinguish two cases where $\overline{C_i^2}$ is either "big" or "small". Let $\varepsilon$ be as in assumption ii) of Theorem \ref{the:main}. We consider two cases.

\begin{description}
\item[{\bf Case 1}] When  $\overline{C_i^2} \geq \varepsilon^2$ for all $i=1,\ldots, I$. 												
In this case, we have
\begin{equation*}
R(C_i) = \left(\sqrt{\overline{C_i^2}} + \overline{C_i}\right)^{-1} \leq \frac{1}{\varepsilon}, \qquad \forall i=1,2,\ldots, I.
\end{equation*}
Thus, we can estimate the left hand side of \eqref{intermediate} for a $\theta\in (0,1)$ to be chosen (and by using Poincar\'e's inequality, the boundedness of $\|\delta_i\|$ and the boundedness of $\mu_i$), as follows
\begin{align}\label{g23}
\qquad\qquad\ \text{LSH of (\ref{intermediate})}&\geq C_P\sum_{i=1}^{I}\|\delta_i\|^2 + \theta\sum_{r=1}^{R}\Biggl[\prod\limits_{i=1}^{I}\left(C_{i,\infty}(1+\mu_i) - \|\delta_i\|^2R(C_i)\right)^{\alpha_i^r}\nonumber\\
&\hspace{3.7cm}- \prod\limits_{i=1}^{I}\left(C_{i,\infty}(1+\mu_i) - \|\delta_i\|^2R(C_i)\right)^{\beta_i^r}\Biggr]^2\nonumber\\
&\geq C_P\sum_{i=1}^{I}\|\delta_i\|^2 + \frac 12\theta\sum_{r=1}^{R}\left[\mathbf C_{\infty}^{\boldsymbol{\alpha}^r}(\mathbf{1} + \boldsymbol{\mu})^{\boldsymbol{\alpha}^r} -  \mathbf C_{\infty}^{\boldsymbol{\beta}^r}(\mathbf{1} + \boldsymbol{\mu})^{\boldsymbol{\beta}^r}\right]^2\nonumber - \theta\, C(\varepsilon, K)\sum_{i=1}^{I}\|\delta_i\|^2\nonumber\\
& \geq \frac 12\theta\min_{r=1,\ldots, R}\left\{\mathbf C_{\infty}^{\boldsymbol{\alpha}^r}\right\}^2\sum_{r=1}^{R}\left[(\mathbf{1} + \boldsymbol{\mu})^{\boldsymbol{\alpha}^r} -  (\mathbf{1}
+ \boldsymbol{\mu})^{\boldsymbol{\beta}^r}\right]^2,
\end{align}		
where we have used the detailed balance condition, $\CC_{\infty}^{\aa^r} = \CC_{\infty}^{\bb^r}$ and chosen $\theta \in (0,1)$ such that $\theta C(\varepsilon, K) \leq C_P$ where $C(\varepsilon, K)$ is a constant explicitly depends on $\varepsilon$ and $K$, \textcolor{black}{and also on $\aa^r$ and $\bb^r$}. Now, by using \eqref{ODEEqui} rewritten 
in the variables \eqref{g21}, i.e. $\sqrt{\overline{c_i}}=\sqrt{c_{i,\infty}}(1+\mu_i)$ and 
\begin{equation}\label{ODEEquimu}
\sum_{r=1}^{R}\left[(1+\boldsymbol{\mu})^{\aa^r} - (1+\boldsymbol{\mu})^{\bb^r}\right]^2 \geq H_4\sum_{i=1}^{I}\mu_i^2
\end{equation}
for $\boldsymbol{\mu}\in [-1,\mu_{max}]^{I}$, we continue to estimate \eqref{g23} below
\begin{equation*}
	\begin{aligned}
		\text{RHS of \eqref{g23}} &\geq \theta\min_{r=1,\ldots, R}\left\{\mathbf C_{\infty}^{\boldsymbol{\alpha}^r}\right\}^2H_4\sum_{i=1}^{I}\mu_i^2\\
		&= \theta\min_{r=1,\ldots, R}\left\{\mathbf C_{\infty}^{\boldsymbol{\alpha}^r}\right\}^2H_4\sum_{i=1}^{I}\frac{\left(\sqrt{\overline{C_i^2}} - C_{i,\infty}\right)^2}{C_{i,\infty}^2} \qquad (\text{by using } \eqref{g21})\\
		&\geq \left(\theta\min_{r=1,\ldots, R}\left\{\mathbf C_{\infty}^{\boldsymbol{\alpha}^r}\right\}^2\frac{H_4}{\max_{i}C_{i,\infty}^2}\right) \sum_{i=1}^{I}\left(\sqrt{\overline{C_i^2}} - C_{i,\infty}\right)^2.
	\end{aligned}
\end{equation*}
Therefore, \eqref{intermediate} follows with
\begin{equation*}
	H_6 = \theta\min_{r=1,\ldots, R}\left\{\mathbf C_{\infty}^{\boldsymbol{\alpha}^r}\right\}^2\frac{H_4}{\max_{i}C_{i,\infty}^2}.
\end{equation*}

%
%
\vskip .5cm
\item[{\bf Case 2}] When $\overline{C_{i_0}^2} \leq \varepsilon^2$ for some ${i_0}\in \{1,\ldots, I\}$. In this case, we only need to bound the right hand side of \eqref{intermediate} above since the left hand side of \eqref{intermediate} is bounded below by a positive constant in terms of the assumed inequality \eqref{FarEqui}
and, thus, \eqref{intermediate} holds true for a sufficiently small constant $H_6$.

Thanks to the boundedness of averaged concentrations $\overline{c_i} \leq K$ in Lemma \ref{bounded} and consequently  $c_{i,\infty}\leq K$, we have
\begin{equation}\label{2star}
\sum_{i=1}^{I}\left(\sqrt{\overline{C_i^2}} - C_{i,\infty}\right)^2 \leq 2\sum_{i=1}^{I}\left(\overline{C_i^2} + C_{i,\infty}^2\right) \leq 4IK.
%
\end{equation}
Hence, it follows from \eqref{FarEqui} that
\begin{equation*}
	\text{LHS of (\ref{intermediate})} \geq H_5 \geq \frac{H_5}{4IK}\sum_{i=1}^{I}\left(\sqrt{\overline{C_i^2}} - C_{i,\infty}\right)^2
\end{equation*}
or equivalently \eqref{intermediate} holds with
\begin{equation*}
	H_6 = \frac{H_5}{4IK}.
\end{equation*}
\end{description}
This finishes the proof the Theorem \ref{the:main}.
\end{proof}

\begin{remark}[How to prove \eqref{ODEEqui} and \eqref{FarEqui}]\hfill\\
The reason that prevents us to prove \eqref{ODEEqui} and \eqref{FarEqui} for general systems of the form \eqref{SS}--\eqref{kr} is that we need some explicit structure of the conservation laws. However, to categorise the structure of the conservation laws of general systems is unclear. We will show for the two specific systems in Sections \ref{sec:3} and \ref{sec:4} that once the conservation laws are explicitly known, we can establish the inequalities \eqref{ODEEqui} and \eqref{FarEqui} explicitly and thus complete the explicit proof of the entropy entropy-dissipation inequality \eqref{MainEstimate}. 

This remark provides an outline of how to obtain \eqref{ODEEqui} and \eqref{FarEqui} for a specific system (see Sections \ref{sec:3} and \ref{sec:4} for details).

\begin{itemize}
	\item[i)] From the conservation law constraints assumed in \eqref{ODEEqui} and the ansatz \eqref{g21}, we have
\begin{equation*}
\mathbb Q \mathbf C_{\infty}^2(\mm^2 + 2\mm) = 0
\end{equation*}
\textcolor{black}{in which $\mathbf C_{\infty}^2(\mm^2 + 2\mm) = (C_{i,\infty}^2(\mu_i^2 + 2\mu_i))_{i=1,\ldots, I}^{\top}$}, and consequently some sign relations between $\mu_i$, which allow to prove \eqref{ODEEqui} (see Lemmas \ref{InterInequal} and \ref{Chain_InterInequal}).	 	
		 
		
	\item[ii)] To show \eqref{FarEqui} for states, where (at least) one of the concentration contains less than a sufficiently small amount of mass, let's say $\overline{C_{i_0}^2} \leq \varepsilon^2$, we distinguish two cases depending on  diffusion being "big" or "small". In the former case, the lower bound $H_6$ follows from the diffusion part $\sum\limits_{i=1}^{I}\|\nabla C_i\|^2$, while in the latter case the reaction part $\sum\limits_{r=1}^{R}\left(\overline{\mathbf C}^{\aa^r} - \overline{\mathbf C}^{\bb^r}\right)^2$ gives the lower bound $H_6$ (see Lemmas \ref{lem:lowerbound_single} and \ref{lem:lowerbound_chain}).
\end{itemize}
We believe that the strategies i) and ii) are applicable to a large class of systems once the conservation laws are explicitly given. We refer the reader to e.g. \cite{DFT16, BFL, JanThesis, FL} where (parts of) the above strategies were already apply.

\end{remark}

\section{A single reversible reaction - Proof of Theorem \ref{mainsingle}}\label{sec:3}
In this section, we will prove Theorem \ref{mainsingle} and in particular the entropy entropy-dissipation estimate \eqref{EEDsingle} with an explicit constant $\lambda_1$, which yields convergence to equilibrium for a single reversible reaction of the form
\begin{equation*}
\alpha_1\mathcal{A}_1 + \alpha_2\mathcal{A}_2 + \ldots + \alpha_I\mathcal{A}_I \leftrightharpoons \beta_1\mathcal{B}_1 + \beta_2\mathcal{B}_2 + \ldots + \beta_J\mathcal{B}_J
\end{equation*}
with stoichiometric coefficients $\alpha_i, \beta_j \geq 1$ for $i=1,\ldots, I$ and $j=1,\ldots, J$ for any $I, J\geq 1$. For the sake of convenience and w.l.o.g. we rescale the forward and backward reaction rate constants to $k_{f} = k_b = 1$. An explicit entropy entropy-dissipation estimate was left open in \cite{MHM} whenever $I+J\geq 3$.  
			
			The reaction is assumed to take place in reaction vessel, i.e. in a bounded domain $\Omega\subset \mathbb{R}^n, n\geq 1$ with sufficiently smooth boundary $\partial\Omega$ (e.g. $\partial\Omega\in C^{2+\epsilon}$ for some $\epsilon >0$). The mass action reaction-diffusion system reads as
\begin{equation}\label{single}
		\begin{cases}
		\partial_ta_i - \mathrm{div}(d_{a,i}\nabla a_{i}) = -\alpha_i(\boldsymbol{a}^{\boldsymbol{\alpha}} - \boldsymbol{b}^{\boldsymbol{\beta}}), &\quad \text{ in } \Omega\times\mathbb{R}_+,\quad i=1,2,\ldots, I, \\
	\partial_tb_j- \mathrm{div}(d_{b,j}\nabla b_{j}) = \beta_j(\boldsymbol{a}^{\boldsymbol{\alpha}} - \boldsymbol{b}^{\boldsymbol{\beta}}), & \quad \text{ in } \Omega\times\mathbb{R}_+,\quad j=1,2,\ldots, J,\\
	\nabla a_i\cdot \nu = \nabla b_j\cdot \nu = 0, &\quad \text{ on } \partial\Omega\times\mathbb{R}_+,\quad i=1,\ldots, I,\; j=1,\ldots, J,\\
	a(x,0) = a_0(x), \quad b(x,0) = b_0(x), &\quad \text{ in }\Omega,
	\end{cases}
\end{equation}
where $ d_{a,i},  d_{b,j}$ are diffusion coefficients satisfying
\begin{equation*}\label{bound_diff}
	0< d_{min} \leq  d_{a,i}, \,  d_{b,j} \leq d_{max} < +\infty \qquad \forall  i=1,\ldots, I,\; j=1,\ldots, J,
\end{equation*}
$\boldsymbol{a} = (a_1, a_2, \ldots, a_I)$, $\boldsymbol{b} = (b_1, b_2, \ldots, b_J)$ are the vectors of left- and right-hand-side concentrations, $\boldsymbol{\alpha} = (\alpha_1, \alpha_2, \ldots, \alpha_I) \in [1,+\infty)^{I}$ and $\boldsymbol{\beta} = (\beta_1, \beta_2, \ldots, \beta_J) \in [1,+\infty)^{J}$ are the corresponding vectors of stoichiometric coefficients and we recall the notation
\begin{equation*}
\boldsymbol{a}^{\boldsymbol{\alpha}} = \prod\limits_{i=1}^{I}a_i^{\alpha_i} \quad \text{ and } \quad \boldsymbol{b}^{\boldsymbol{\beta}} = \prod\limits_{j=1}^{J}b_j^{\beta_j}.
\end{equation*}
			
The aim of this section is to apply the method proposed in Section \ref{sec:2} to prove the entropy entropy-dissipation estimate \eqref{EEDsingle}, which implies explicit convergence to equilibrium for the system \eqref{single}. First, we shall specify the mass conservation laws for \eqref{single}, which are essential to our strategy. Then, \eqref{single} is shown to satisfy the assumptions $(\mathbf{A1})$ and $(\mathbf{A2})$, that is \eqref{single} satisfies the detailed balance condition and has no boundary equilibrium. Finally, we prove  the main result of this section Theorem \ref{mainsingle}.

\begin{lemma}[Mass conservation laws]\label{masssingle}\hfill\\
The system \eqref{single} obeys $I+J-1$ linear independent mass conservation laws.
With respect to the general notation of conservation laws \eqref{g9}, the matrix $\mathbb{Q}$ can be chosen as
\begin{equation*}
\mathbb{Q} = [v_1, \ldots, v_J, w_2, \ldots, w_I]^{\top}\in \mathbb{R}^{(I+J-1)\times (I+J)}
\end{equation*}
where the zero left-eigenvectors $v_j$ and $w_i$ are defined by
\begin{equation}\label{vj}
v_j = \biggl(\underbrace{\frac{1}{\alpha_1}, 0, \ldots, 0, \frac{1}{\beta_j}}_{I+j}, 0, \ldots, 0\biggr) \qquad \text{ and } \qquad  w_i = \biggl(\underbrace{\underbrace{0,\ldots, 0, \frac{1}{\alpha_i}}_{i}, 0, \ldots, 0, \frac{1}{\beta_1}}_{I+1}, 0, \ldots, 0\biggr),
\end{equation}
for $1\leq j \leq J$ and $2\leq i\leq I$. 
\end{lemma}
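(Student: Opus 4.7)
The plan is to directly verify the claim in two steps: first a dimension count showing that the kernel of $W$ has dimension exactly $I+J-1$, and second a concrete computation verifying that the $I+J-1$ proposed vectors $v_j$ and $w_i$ both lie in $\ker(W)$ and are linearly independent.

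For the dimension count, note that the reaction network \eqref{single} consists of a single reversible reaction, so the stoichiometric matrix
$W^\top \in \mathbb{R}^{(I+J)\times 1}$ has range spanned by the single vector $(-\alpha_1,\ldots,-\alpha_I,\beta_1,\ldots,\beta_J)^\top$, which is nonzero (since all $\alpha_i,\beta_j\geq 1$). Hence $\mathrm{rank}(W)=1$ and $\dim\ker(W)=(I+J)-1=I+J-1$. Equivalently, a vector $q=(q_{a,1},\ldots,q_{a,I},q_{b,1},\ldots,q_{b,J})$ satisfies $q\cdot\mathbf{R}(\boldsymbol{c})=0$ for all $\boldsymbol{c}\in\mathbb{R}_{+}^{I+J}$ if and only if
\[
-\sum_{i=1}^{I}\alpha_i q_{a,i}+\sum_{j=1}^{J}\beta_j q_{b,j}=0,
\]
a single linear constraint on the $I+J$ components of $q$.

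For the verification, I plug each $v_j$ and $w_i$ from \eqref{vj} into the above identity: for $v_j$ we obtain $-\alpha_1\cdot(1/\alpha_1)+\beta_j\cdot(1/\beta_j)=-1+1=0$, and for $w_i$ we get $-\alpha_i\cdot(1/\alpha_i)+\beta_1\cdot(1/\beta_1)=0$. Thus all $J+(I-1)=I+J-1$ vectors lie in $\ker(W)$. Linear independence follows by inspection of coordinates: suppose $\sum_{j=1}^J \lambda_j v_j+\sum_{i=2}^I \mu_i w_i=0$. Reading off the $i$-th coordinate for $2\leq i\leq I$ gives $\mu_i/\alpha_i=0$, so $\mu_i=0$ for all $i\geq 2$. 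Reading off the $(I+j)$-th coordinate for $2\leq j\leq J$ gives $\lambda_j/\beta_j=0$, so $\lambda_j=0$ for all $j\geq 2$. What remains is $\lambda_1 v_1=0$, which forces $\lambda_1=0$. Since the dimension of $\ker(W)$ equals $I+J-1$, these vectors form a basis.

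Finally, the associated mass conservation laws \eqref{g9} are obtained by testing each row of $\mathbb{Q}$ against the spatially averaged concentration vector, which yields $\overline{a_1}/\alpha_1+\overline{b_j}/\beta_j=M_{1,j}$ for $j=1,\ldots,J$ and $\overline{a_i}/\alpha_i+\overline{b_1}/\beta_1=M_{i,1}$ for $i=2,\ldots,I$. These are precisely $I+J-1$ of the $IJ$ pairwise identities \eqref{MassSingle}; the remaining relations $\overline{a_i}/\alpha_i+\overline{b_j}/\beta_j=M_{i,j}$ for $i\geq 2$, $j\geq 2$ are linear combinations, e.g. $M_{i,j}=M_{i,1}+M_{1,j}-M_{1,1}$. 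There is no essential obstacle here; the only point requiring care is the linear-independence bookkeeping, which is handled cleanly by the coordinate-by-coordinate inspection above.
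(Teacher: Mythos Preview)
Your proof is correct and reaches the same conclusion as the paper, but the emphasis differs slightly. The paper works from the PDE side: it divides the equations for $a_i$ and $b_j$ by their stoichiometric coefficients, sums, integrates over $\Omega$, and uses the Neumann boundary condition to obtain the $IJ$ identities $\overline{a_i}/\alpha_i + \overline{b_j}/\beta_j = M_{i,j}$; it then simply asserts that the $I+J-1$ vectors $v_j,w_i$ are linearly independent ``by construction'' and checks that the remaining laws are linear combinations via $M_{i,j}=M_{i,1}+M_{1,j}-M_{1,1}$. You instead work from the algebraic side: a rank--nullity count on the one-row Wegscheider matrix gives $\dim\ker(W)=I+J-1$ up front, after which membership and independence of the proposed vectors are verified directly. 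Your route has the advantage of making the dimension claim immediate rather than implicit, and your coordinate-by-coordinate independence argument is more explicit than the paper's ``by construction''. The paper's route has the advantage of deriving the conserved quantities directly from the evolution equations, which is closer in spirit to how they are used downstream. Either argument is complete.
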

\begin{proof}
From \eqref{single}, by dividing the equation of $a_i$ by $\alpha_i$ and the equation of $b_j$ by $\beta_j$, summation and integration over $\Omega$ yields, thanks to the homogeneous Neumann boundary condition,
\begin{equation*}
\frac{d}{dt}\int_{\Omega}\left(\frac{a_i(x,t)}{\alpha_i} + \frac{b_j(x,t)}{\beta_j}\right)dx = 0 \qquad \forall t>0.
\end{equation*}
Hence, after introducing the nonnegative partial masses $M_{i,j}:= \int_{\Omega}\bigl(\frac{a_{i,0}(x)}{\alpha_i} + \frac{b_{j,0}(x)}{\beta_j}\bigr)dx$, we observe that system \eqref{single} obeys the following $IJ$ mass conservation laws
\begin{equation}\label{ConsMij}
\frac{\overline{a_i}(t)}{\alpha_i} + \frac{\overline{b_j}(t)}{\beta_j} = M_{i,j}, \qquad \forall t>0,\quad  \forall i=1,\ldots, I,\quad  \forall j=1,\ldots, J,
\end{equation}
where we recall the notation $\overline{a_i} = \int_{\Omega}a_i(x)dx$. The following $I+J-1$ conservations laws 
\begin{equation*}
\frac{\overline{a_1}}{\alpha_1} + \frac{\overline{b_j}}{\beta_j} = M_{1,j}\quad \text{ and } \quad \frac{\overline{a_i}}{\alpha_i} + \frac{\overline{b_1}}{\beta_1} = M_{i,1} \qquad 
\text{for} \quad j=1,\ldots, J, \quad i=2,\ldots, I.
\end{equation*}
are linear independent since the corresponding zero left-eigenvectors $v_j$ and $w_i$ as given in \eqref{vj} are linear independent by construction. Moreover, all other conservation laws can be expressed in terms of these $I+J-1$ conservation laws due to
\begin{equation*}
\frac{\overline{a_i}}{\alpha_i} + \frac{\overline{b_j}}{\beta_j} = \left(M_{i,1} - \frac{\overline{b_1}}{\beta_1}\right) + \left(M_{1,j} - \frac{\overline{a_1}}{\alpha_1}\right) = M_{i,1} + M_{1,j} - M_{1,1}.
\end{equation*}
\end{proof}
			
\begin{remark}\label{Lem:FixInitialMass}
It follows from the Lemma \ref{masssingle} that {the $I+J-1$ coordinates of the initial mass vector $\M$ are determined by prescribing $M_{1,j}$ for $1\leq j\leq J$ and $M_{i,1}$ for $i=2,\ldots, I$}. Therefore, by calling the initial mass vector $\M$ fixed, we mean that those coordinates are prescribed.
\end{remark}

\begin{remark}	
Another useful set of conservation laws follows from dividing the equation for $a_i$ by $\alpha_i$ and the equation for $a_k$ by $\alpha_k$ for $1\le i\neq k \le I$:
\begin{equation*}\label{ConsNij}
\int_{\Omega}\left(\frac{a_{i}(t,x)}{\alpha_i} -  \frac{a_{k}(t,x)}{\alpha_k }\right)dx = N_{i,k}:= \int_{\Omega}\left(\frac{a_{i,0}(x)}{\alpha_i} -  \frac{a_{k,0}(x)}{\alpha_k }\right)dx, 
\qquad \forall t\geq 0,\quad 
1\le i\neq k \le I,
\end{equation*}
It's also useful to observe that
\begin{equation}\label{3.13'}
N_{i,k} = M_{i,j} - M_{k,j}, \qquad \forall 1\leq j \leq J.
\end{equation}
\end{remark}

\begin{lemma}[Unique constant positive detailed balance equilibrium]\label{Equilibrium}\hfill\\
For any fixed positive initial mass vector $\M$, the system \eqref{single} possesses a unique positive detailed balance equilibrium $(\boldsymbol{a}_{\infty}, \boldsymbol{b}_{\infty})\in (0,+\infty)^{I+J}$ solving
\begin{equation}\label{Equilibrium1} 
\begin{cases}
\dfrac{a_{i,\infty}}{\alpha_i} + \dfrac{b_{j,\infty}}{\beta_j} = M_{i,j}, \qquad i=1,\ldots, I,\quad j=1,\ldots, J,\\[2mm]
\boldsymbol{a}_{\infty}^{\boldsymbol{\alpha}} = \boldsymbol{b}_{\infty}^{\boldsymbol{\beta}}.
\end{cases}
\end{equation}
Consequently, system \eqref{single} satisfies the assumptions $(\mathbf{A1})$ and $(\mathbf{A2})$.
\end{lemma}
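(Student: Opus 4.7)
The plan is to reduce the equilibrium system \eqref{Equilibrium1} to a single scalar monotone equation, from which both existence and uniqueness follow by the intermediate value theorem, and then to rule out boundary equilibria directly. First, I exploit the $I+J-1$ independent linear conservation laws from Lemma \ref{masssingle} to parametrize every candidate $(\boldsymbol{a}_\infty,\boldsymbol{b}_\infty)\ge 0$ by a single scalar $s := a_{1,\infty}/\alpha_1\ge 0$. Combining $\frac{a_{i,\infty}}{\alpha_i}+\frac{b_{1,\infty}}{\beta_1}=M_{i,1}$ with $\frac{a_{1,\infty}}{\alpha_1}+\frac{b_{1,\infty}}{\beta_1}=M_{1,1}$ yields
\begin{equation*}
\frac{a_{i,\infty}}{\alpha_i}=s+(M_{i,1}-M_{1,1}),\qquad \frac{b_{j,\infty}}{\beta_j}=M_{1,j}-s,\qquad i=1,\ldots,I,\ j=1,\ldots,J.
\end{equation*}
Positivity of every component is then equivalent to $s\in(s_-,s_+)$, with $s_-:=\max_{i}\max\{0,\,M_{1,1}-M_{i,1}\}$ and $s_+:=\min_{j}M_{1,j}$; nonemptiness of this interval amounts to $M_{i,1}+M_{1,j}-M_{1,1}=M_{i,j}>0$, which is precisely the consistency identity built into Lemma \ref{masssingle} for a positive mass vector $\M$.

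The next step is to substitute the parametrization into the detailed balance relation $\boldsymbol{a}_\infty^{\boldsymbol{\alpha}}=\boldsymbol{b}_\infty^{\boldsymbol{\beta}}$, which becomes the single scalar equation
\begin{equation*}
F(s) \;:=\; \prod_{i=1}^{I}\bigl[\alpha_i(s+M_{i,1}-M_{1,1})\bigr]^{\alpha_i} \;-\; \prod_{j=1}^{J}\bigl[\beta_j(M_{1,j}-s)\bigr]^{\beta_j} \;=\; 0, \qquad s\in(s_-,s_+).
\end{equation*}
The first product is a positive, continuous, strictly increasing function of $s$ on $(s_-,s_+)$ (each factor is), while the second is positive, continuous and strictly decreasing, so $F$ is continuous and strictly increasing. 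As $s\to s_-^+$ at least one factor of the first product vanishes while the second remains strictly positive, giving $F(s_-^+)<0$; the roles reverse as $s\to s_+^-$, giving $F(s_+^-)>0$. Thus the intermediate value theorem together with strict monotonicity yields a unique root $s^{*}\in(s_-,s_+)$, which defines the unique positive detailed balance equilibrium $(\boldsymbol{a}_\infty,\boldsymbol{b}_\infty)\in(0,\infty)^{I+J}$ and establishes $(\mathbf{A1})$.

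To verify $(\mathbf{A2})$, suppose toward contradiction that an equilibrium $(\boldsymbol{a}^{*},\boldsymbol{b}^{*})\in\partial[0,\infty)^{I+J}$ exists. The condition $\mathbf{R}(\boldsymbol{a}^{*},\boldsymbol{b}^{*})=0$ together with $\alpha_i,\beta_j\ge 1$ reduces to $(\boldsymbol{a}^{*})^{\boldsymbol{\alpha}}=(\boldsymbol{b}^{*})^{\boldsymbol{\beta}}$. If some $a_{i_0}^{*}=0$ then the left-hand side vanishes, which forces some $b_{j_0}^{*}=0$; but then the conservation law $a_{i_0}^{*}/\alpha_{i_0}+b_{j_0}^{*}/\beta_{j_0}=M_{i_0,j_0}$ gives $M_{i_0,j_0}=0$, contradicting the positivity of $\M$. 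The case of a vanishing $b_{j}^{*}$ is symmetric. The main conceptual obstacle is only the bookkeeping of the sign relations between the $M_{i,j}$ that ensure $(s_-,s_+)$ is nonempty; once this is handled, the rest is monotonicity plus the intermediate value theorem, and no fixed-point or compactness argument is needed, which is precisely what makes the construction fully explicit and reusable in the proof of Theorem \ref{mainsingle}.
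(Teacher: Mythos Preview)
Your proof is correct and follows essentially the same approach as the paper: parametrize all candidate equilibria by a single scalar (the paper uses $a_{1,\infty}$, you use $s=a_{1,\infty}/\alpha_1$), reduce the detailed balance condition to a strictly monotone scalar equation, and apply the intermediate value theorem. The only cosmetic differences are that the paper first relabels so that $M_{1,1}=\min_i M_{i,1}$ (whence your $s_-$ becomes $0$), and that for $(\mathbf{A2})$ the paper argues directly that all $b_{j,\infty}=\beta_j M_{i_0,j}>0$ rather than first extracting a vanishing $b_{j_0}^*$; both variants are equivalent.
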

\begin{proof}
\textcolor{black}{
Without loss of generality, we may assume that
\[
M_{1,1} = \min_{i=1,\ldots, I}\{M_{i,1}\},
\]
which implies in \eqref{3.13'} that $N_{i,1} = M_{i,1} - M_{1,1} \geq 0$ for $i=2,\ldots, I$. 
In the following, we denote by $(\boldsymbol{a}_{\infty}, \boldsymbol{b}_{\infty})$ a possible solution
to \eqref{Equilibrium1}.}
From \eqref{Equilibrium1} and \eqref{3.13'} it follows  for all $i>1$ that
\begin{equation}
\frac{a_{i,\infty}}{\alpha_i} - \frac{a_{1,\infty}}{\alpha_1} = M_{i,k} - M_{1,k} = N_{i,1} \ge0
\end{equation}
Thus, for any possible solution, we observe that   
\begin{equation*}\label{a1mono}
\boldsymbol{a}_{\infty}^{\boldsymbol{\alpha}}= \prod\limits_{i=1}^{I}a_{i,\infty}^{\alpha_i} = 
a_{1,\infty}^{\alpha_1} \prod\limits_{i=2}^{I} \left(\alpha_i N_{i,1} + \frac{\alpha_i}{\alpha_1} a_{1,\infty}\right)^{\alpha_i} =: f(a_{1,\infty}),
\end{equation*}
\textcolor{black}{
is a positive, strictly monotone increasing function $f(a_{1,\infty})>0$
in $a_{1,\infty}\in(0,+\infty)$ with $f(0)=0$ and $\lim_{z\to\infty}f(z) = +\infty$. 
Similarly, we deduce from \eqref{Equilibrium1} that $b_{j,\infty}(a_{1,\infty}) = \beta_j M_{1,j} - \frac{\beta_j}{\alpha_1} a_{1,\infty}$ and $b_{j,\infty}(0)>0$ since $\M>0$ by assumption. Therefore,
}
\begin{equation*}\label{b1mono}
\boldsymbol{b}_{\infty}^{\boldsymbol{\beta}}=\prod\limits_{j=1}^{J}b_{j,\infty}^{\beta_j} = 
\prod\limits_{j=1}^{J} \left(\beta_j M_{1,j}- \frac{\beta_j}{\alpha_1} a_{1,\infty}\right)^{\beta_j} =: g(a_{1,\infty}),
\end{equation*}
\textcolor{black}{
is a strictly monotone decreasing function in $a_{1,\infty}$ 
with $g(0) >0$. By defining $0< \hat a_{1,\infty}:=\min_{j=1,\ldots, J}\left\{\alpha_1 M_{1,j}\right\}$,
we note that $b_{j^*,\infty}(\hat a_{1,\infty})=0$ for some $j^*\in\{1,\ldots, J\}$ and $b_{j,\infty}(a_{1,\infty})>0$ for all $a_{1,\infty}\in (0, \hat a_{1,\infty})$ and $j\in\{1,\ldots, J\}$.
Thus $g(\hat a_{1,\infty}) = 0$ while 
$g(a_{1,\infty})>0$ for $a_{1,\infty}\in (0, \hat a_{1,\infty})$
and the equation $f(a_{1,\infty}) = g(a_{1,\infty})$
has a unique positive solution $a_{1,\infty}\in (0, \hat a_{1,\infty})$. By setting 
\begin{equation*}
a_{i,\infty} = \alpha_{i}N_{i,1} + \frac{\alpha_i}{\alpha_1}a_{1,\infty} \quad \text{ and } \quad b_{j,\infty} = \beta_jM_{1,j} - \frac{\beta_j}{\alpha_1}a_{1,\infty}
\end{equation*}
for $i=2,\ldots, I$ and $j=1,\ldots, J$ and by recalling $N_{i,1} \geq 0$, this yields 
a unique positive equilibrium $(\boldsymbol{a}_{\infty}, 
\boldsymbol{b}_{\infty})\in (0,+\infty)^{I+J}$, 
which satisfies \eqref{Equilibrium1} by construction.
}
			
It is straightforward that the assumption $(\mathbf{A1})$ holds. The assumption $(\mathbf{A2})$ can be easily verified by assuming $a_{i_0,\infty} = 0$ for some $1\leq i_0 \leq I$, which implies $\boldsymbol{a}_{\infty}^{\boldsymbol{\alpha}} = 0$. Then, by \eqref{Equilibrium1}, $b_{j,\infty} = \beta_j M_{i_0,j}>0$ for all $j=1,2,\ldots, J$, and thus $\boldsymbol{b}_{\infty}^{\boldsymbol{\beta}} > 0$, which is a contradiction. 
We conclude $a_{i,\infty} > 0$ for all $i=1,2,\ldots, I$ and similarly, $b_{j,\infty} >0$ for all $j=1,2,\ldots, J$. Therefore, the system \eqref{single} has no boundary equilibrium.

\end{proof}

The entropy functional for system \eqref{single} writes as
\begin{equation*}
			\mathcal{E}(\boldsymbol{a}, \boldsymbol{b}) = \sum_{i=1}^{I}\int_{\Omega}(a_i\log{a_i} - a_i + 1)dx + \sum_{j=1}^{J}\int_{\Omega}(b_j\log{b_j} - b_j + 1)dx
\end{equation*}
and the entropy-dissipation writes as
\begin{equation*}
			\mathcal{D}(\boldsymbol{a}, \boldsymbol{b}) = \sum_{i=1}^{I}\int_{\Omega} d_{a,i}\frac{|\nabla a_i|^2}{a_i}dx + \sum_{j=1}^{J}\int_{\Omega} d_{b,j}\frac{|\nabla b_j|^2}{b_j}dx + \int_{\Omega}(\boldsymbol{a}^{\boldsymbol{\alpha}} - \boldsymbol{b}^{\boldsymbol{\beta}})\log{\frac{\boldsymbol{a}^{\boldsymbol{\alpha}}}{\boldsymbol{b}^{\boldsymbol{\beta}}}}dx.
\end{equation*}

\begin{lemma}\label{InterInequal}
	Let $\mu_i\in [-1,+\infty)$, $i=1,\ldots, I$ and $\xi_j\in [-1,+\infty)$, $j=1,\ldots, J$ satisfy
	\begin{equation}\label{i2}
		A_{i,\infty}^2\,\mu_i(\mu_i + 2) + B_{j,\infty}^2\,\xi_j(\xi_j + 2) = 0, 
		\qquad \text{for all} \quad 1\leq i\leq I, \quad 1\leq j \leq J.
	\end{equation}
	Then,  the following inequality holds
	\begin{equation}\label{Inter1}
	\biggl(\prod_{i=1}^{I}(1+\mu_i)^{\alpha_i} - \prod_{j=1}^{J}(1+\xi_j)^{\beta_j}\biggr)^2 \geq H_4^{single} \biggl(\sum_{i=1}^{I}\mu_i^2 + \sum_{j=1}^{J}\xi_j^2\biggr)
	\end{equation}
	where
	\begin{equation}\label{H4_single}
		H_4^{single} = \frac{1}{\max\{I, J\}}.
	\end{equation}
\end{lemma}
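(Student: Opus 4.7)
The plan is to first perform a sign analysis of the constraints \eqref{i2}. Since $\mu_i, \xi_j \in [-1,+\infty)$, the factors $\mu_i + 2, \xi_j + 2 > 0$, so $\mu_i(\mu_i+2)$ has the same sign as $\mu_i$ and likewise for $\xi_j$. Because \eqref{i2} must hold for every pair $(i,j)$ and $A_{i,\infty}^2, B_{j,\infty}^2 > 0$, the signs of $\mu_i$ and $\xi_j$ must be opposite for every such pair. A quick case check (starting from any $\mu_{i_1} > 0$ forces $\xi_j < 0$ for all $j$, which in turn forces $\mu_i > 0$ for all $i$; and $\mu_{i_1}=0$ for some $i_1$ propagates to all indices being zero) shows that either all $\mu_i, \xi_j$ vanish (trivially establishing \eqref{Inter1}) or all $\mu_i > 0$ and all $\xi_j < 0$ (or the mirror case). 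By the evident symmetry between the two vectors, I may assume WLOG that $\mu_i \ge 0$ for all $i$ and $\xi_j \le 0$ for all $j$.

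Next I write $P := \prod_{i=1}^I (1+\mu_i)^{\alpha_i}$ and $Q := \prod_{j=1}^J (1+\xi_j)^{\beta_j}$, so that $P \ge 1 \ge Q \ge 0$. The central algebraic identity is
\begin{equation*}
(P-Q)^2 = (P-1)^2 + 2(P-1)(1-Q) + (1-Q)^2 \ge (P-1)^2 + (1-Q)^2,
\end{equation*}
since the cross term is non-negative. Thus it remains to bound $(P-1)^2$ and $(1-Q)^2$ from below by the corresponding $L^2$-sums of $\mu_i$'s and $\xi_j$'s.

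For $(P-1)^2$, I iterate Bernoulli's inequality. Since $\alpha_i \ge 1$ and $\mu_i \ge 0$, one has $(1+\mu_i)^{\alpha_i} \ge 1+\alpha_i\mu_i \ge 1+\mu_i$, and multiplying these out with non-negative cross terms yields $P \ge 1 + \sum_i \mu_i$. Squaring and again using $\mu_i\ge 0$ (so the cross terms $\mu_i\mu_k$ are non-negative) gives the clean bound $(P-1)^2 \ge \bigl(\sum_i \mu_i\bigr)^2 \ge \sum_i \mu_i^2$. For $(1-Q)^2$ only the weaker ``max'' bound is available: each factor $(1+\xi_j)^{\beta_j} \in [0,1]$, so $Q$ is bounded above by any single factor, and $(1+\xi_j)^{\beta_j} \le 1+\xi_j$ (because $1+\xi_j\in [0,1]$ and $\beta_j \ge 1$); hence $1-Q \ge |\xi_j|$ for every $j$, giving $(1-Q)^2 \ge \max_j \xi_j^2 \ge \tfrac{1}{J}\sum_j \xi_j^2$.

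Combining these yields $(P-Q)^2 \ge \sum_i \mu_i^2 + \tfrac{1}{J}\sum_j \xi_j^2 \ge \tfrac{1}{\max\{I,J\}}\bigl(\sum_i \mu_i^2 + \sum_j \xi_j^2\bigr)$. The mirror case ($\mu_i \le 0, \xi_j \ge 0$) is treated identically with the roles of $\mu$ and $\xi$ exchanged and produces instead $(P-Q)^2 \ge \tfrac{1}{I}\sum_i \mu_i^2 + \sum_j \xi_j^2$, also majorising $\tfrac{1}{\max\{I,J\}}\bigl(\sum \mu_i^2 + \sum \xi_j^2\bigr)$. The main subtlety is recognising that a symmetric bound $(1-Q)^2 \ge \sum_j \xi_j^2$ \emph{fails} in general (e.g.\ take $\beta_j=1$ and $\xi_1=\xi_2=-1$), so one must accept the $1/J$ loss on one side and compensate via the other case through the symmetric argument; this explains precisely why the constant \eqref{H4_single} takes the form $1/\max\{I,J\}$ rather than $1$.
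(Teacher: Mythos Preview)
Your proof is correct and follows essentially the same strategy as the paper: the sign analysis via \eqref{i2} to reduce to the case $\mu_i\ge 0$, $\xi_j\le 0$, and then elementary product bounds using $\alpha_i,\beta_j\ge 1$. The only cosmetic difference is that the paper bounds $P-Q\ge(1+\mu_{i_*})-(1+\xi_{j_*})=\mu_{i_*}-\xi_{j_*}$ directly for each pair $(i_*,j_*)$ and then squares, whereas you insert the intermediate value $1$ via $(P-Q)^2\ge (P-1)^2+(1-Q)^2$ and bound the two pieces separately; your route even yields the slightly sharper intermediate estimate $(P-1)^2\ge\sum_i\mu_i^2$ rather than $\max_i\mu_i^2$, but both lead to the same constant $1/\max\{I,J\}$.
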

\begin{remark}
	Note that $\mu_i$ and $\xi_j$ are derived from perturbation argument like \eqref{g21}, i.e. 
	\begin{equation*}
		\overline{A_i^2} = A_{i,\infty}^2(1+\mu_i)^2 \quad \text{ and } \quad \overline{B_j^2} = B_{j,\infty}^2(1+\xi_j)^2.
	\end{equation*}
\end{remark}
\begin{proof}
	The proof of \eqref{Inter1} relies only on sign relations between $\mu_i$ and $\xi_j$ entailed by the conservation laws \eqref{i2}.
	Since $\mu_i, \xi_j \in [-1,+\infty)$, it follows from \eqref{i2} that $\mu_i$ and $\xi_j$ must have opposite signs. In particular, if $\mu_i \geq 0$ for some $i=1,2,\ldots, I$, then $\xi_j \leq 0$ for all $j=1,2,\ldots, J$ and in return $\mu_i \geq 0$ for all $i=1,2,\ldots, I$. Therefore, we only have to consider two cases:
	\begin{itemize}
		\item[(I)] $\mu_i \geq 0$ and $\xi_j \leq 0$ for all  $i=1,2,\ldots, I$ and $j=1,2,\ldots, J$, or
		\item[(II)] $\mu_i \leq 0$ and $\xi_j \geq 0$ for all  $i=1,2,\ldots, I$ and $j=1,2,\ldots, J$.
	\end{itemize}
	We will only prove case (I) since case (II) can be treated analogue. Now since $\mu_i\geq 0$ and $-1\leq \xi_j\leq 0$, we can apply Jensen's  inequality for any $1\leq i_* \leq I$ and $1\leq j_* \leq J$ to estimate
	\begin{equation*}
	\begin{aligned}
	\biggl|\prod_{i=1}^{I}(1+\mu_i)^{\alpha_i} - \prod_{j=1}^{J}(1+\xi_j)^{\beta_j}\biggr| &\geq \prod_{i=1}^{I}(1+\mu_i)^{\alpha_i} - \prod_{j=1}^{J}(1+\xi_j)^{\beta_j}
	\geq \prod_{i=1}^{I}(1+\mu_i) - \prod_{j=1}^{J}(1+\xi_j)\\
	&\geq (1+\mu_{i_*}) - (1+\xi_{j_*})\\
	&= \mu_{i_*} - \xi_{j_*} \geq 0.
	\end{aligned}
	\end{equation*}
	Hence, for any indices $1\leq i_* \leq I$ and $1\leq j_* \leq J$
	\begin{equation*}
	\biggl(\prod_{i=1}^{I}(1+\mu_i)^{\alpha_i} - \prod_{j=1}^{J}(1+\xi_j)^{\beta_j}\biggr)^2 \geq (\mu_{i_*} - \xi_{j_*})^2 \geq \mu_{i_*}^2 + \xi_{j_*}^2
	\end{equation*}
	since $\mu_{i^*}\mu_{j^*} \leq 0$, and \eqref{Inter1} follows immediately.			
\end{proof}

\begin{lemma}\label{lem:lowerbound_single}
{Under the assumptions of Theorem \ref{mainsingle}}, there exists $0<\varepsilon\ll 1$ such that if $\overline{A_i^2} \leq \varepsilon^2$ for some $i=1,\ldots, I$ or $\overline{B_j^2} \leq \varepsilon^2$ for some $j=1,\ldots, J$, then 
	\begin{equation}\label{lowerbound_single}
		\sum_{i=1}^{I}\|\nabla A_i\|^2 + \sum_{j=1}^{J}\|\nabla B_j\|^2 + \left(\overline{\boldsymbol A}^{\aa} - \overline{\boldsymbol B}^{\bb}\right)^2 \geq H_5^{single}
	\end{equation}  
	where
	\begin{equation}\label{H_5single}
		H_5^{single} = \min\left\{\frac{C_p\varepsilon^2}{\max_i \alpha_i}; \frac{C_p\varepsilon^2}{\max_j\beta_j}; \frac 14 \min_{i}\prod_{j=1}^{J}\left[\frac{\beta_jM_{i,j}}{2}\right]^{\beta_j}; \frac 14 \min_{j}\prod_{i=1}^{I}\left[\frac{\alpha_iM_{i,j}}{2}\right]^{\alpha_i} \right\}.
	\end{equation}
\end{lemma}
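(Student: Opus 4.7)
The plan is to establish the inequality via a dichotomy based on the size of the Fisher-information terms. By the symmetry of \eqref{lowerbound_single} under the swap $\boldsymbol{a}\leftrightarrow \boldsymbol{b}$, it suffices to treat the case $\overline{A_{i_0}^2} = \overline{a_{i_0}} \leq \varepsilon^2$ for some $i_0\in\{1,\ldots,I\}$; the dual case $\overline{b_{j_0}}\leq\varepsilon^2$ is analogous and produces the fourth term in $H_5^{single}$. The decisive input from the conservation laws \eqref{ConsMij} is that, for every $j$,
\[
\overline{b_j} \;=\; \beta_j M_{i_0,j} - \tfrac{\beta_j}{\alpha_{i_0}}\overline{a_{i_0}} \;\geq\; \beta_j M_{i_0,j} - \tfrac{\beta_j}{\alpha_{i_0}}\varepsilon^2,
\]
which is uniformly bounded below once $\varepsilon^2$ is small compared to $\alpha_{i_0}\min_j M_{i_0,j}$.

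\textbf{Case 1 (large gradients).} If some $\|\nabla A_i\|^2 \geq C_p\varepsilon^2/\max_i\alpha_i$ or some $\|\nabla B_j\|^2\geq C_p\varepsilon^2/\max_j\beta_j$, the conclusion is immediate from the first two sums in \eqref{lowerbound_single}, matching the first or second entry in the minimum defining $H_5^{single}$.

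\textbf{Case 2 (small gradients).} Otherwise, Poincaré's inequality yields
\[
\overline{B_j^2} - \overline{B_j}^2 \;=\; \|B_j - \overline{B_j}\|^2 \;\leq\; \tfrac{\varepsilon^2}{\max_j\beta_j}
\]
and analogously for the $A_i$. Combining with the conservation-law bound above gives $\overline{B_j}^2 \geq \beta_j M_{i_0,j} - O(\varepsilon^2) \geq \tfrac{\beta_j M_{i_0,j}}{2}$ for $\varepsilon$ sufficiently small, uniformly in $j$, hence
\[
\overline{\boldsymbol B}^{\bb} \;=\; \prod_{j=1}^{J}\overline{B_j}^{\beta_j} \;\geq\; \prod_{j=1}^{J}\left(\tfrac{\beta_j M_{i_0,j}}{2}\right)^{\beta_j/2}.
\]
Simultaneously $\overline{A_{i_0}}\leq\sqrt{\overline{a_{i_0}}}\leq\varepsilon$ and the remaining $\overline{A_i}\leq\sqrt{K}$ by Lemma \ref{bounded}, so $\overline{\boldsymbol A}^{\aa} \leq \varepsilon^{\alpha_{i_0}}K^{C} \leq \varepsilon\,C(K,\boldsymbol\alpha)$, which is arbitrarily small. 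Shrinking $\varepsilon$ once more to enforce $\overline{\boldsymbol A}^{\aa}\leq\tfrac12\overline{\boldsymbol B}^{\bb}$ yields
\[
\bigl(\overline{\boldsymbol A}^{\aa} - \overline{\boldsymbol B}^{\bb}\bigr)^2 \;\geq\; \tfrac14\bigl(\overline{\boldsymbol B}^{\bb}\bigr)^2 \;\geq\; \tfrac14\prod_{j=1}^{J}\left(\tfrac{\beta_j M_{i_0,j}}{2}\right)^{\beta_j},
\]
and taking the minimum over $i_0$ recovers the third term in $H_5^{single}$.

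\textbf{Main obstacle.} The delicate point is the simultaneous bookkeeping of several ``for $\varepsilon$ small enough'' conditions: $\varepsilon$ must be chosen small relative to $\M$ (to keep $\overline{b_j}$ bounded below), relative to the combined Poincaré/conservation errors (to retain $\overline{B_j}^2\geq \beta_j M_{i_0,j}/2$), and relative to $K$ and $\boldsymbol\alpha$ (to ensure $\overline{\boldsymbol A}^{\aa}\leq\tfrac12\overline{\boldsymbol B}^{\bb}$). All four choices must be made uniformly over $i_0$ and $j$ so that the resulting $\varepsilon$ and $H_5^{single}$ depend explicitly only on $\M, K, \boldsymbol\alpha, \boldsymbol\beta, \Omega$; the four entries in the final minimum record the worst lower bound among the qualitatively distinct mechanisms (large $\nabla A_i$, large $\nabla B_j$, small $\overline{a_{i_0}}$, and symmetrically small $\overline{b_{j_0}}$).
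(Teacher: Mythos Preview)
Your proof is correct and follows essentially the same approach as the paper: a dichotomy between ``diffusion dominant'' (some gradient term already exceeds the threshold) and ``reaction dominant'' (all deviations small, so the conservation law forces $\overline{B_j}^2$ to be uniformly bounded below while $\overline{\boldsymbol A}^{\boldsymbol\alpha}$ is small). The only cosmetic differences are that the paper places the threshold on the $L^2$ deviations $\|\delta_i\|^2,\|\eta_j\|^2$ rather than on the gradients directly (equivalent via Poincar\'e), bounds $\overline{A_i^2}$ for $i\neq i_0$ by $\alpha_i M_{i,1}$ from the conservation law rather than by $K$, and uses the elementary inequality $(a-b)^2\geq\tfrac12 b^2 - a^2$ in place of your $\overline{\boldsymbol A}^{\boldsymbol\alpha}\le\tfrac12\overline{\boldsymbol B}^{\boldsymbol\beta}$ step; it also records an explicit choice of $\varepsilon$ (their (3.12)), which your ``Main obstacle'' paragraph correctly identifies as the only remaining bookkeeping.
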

\begin{remark}
	The value of $\varepsilon$ can be computed explicitly, see \eqref{e38}.
\end{remark}
\begin{proof}
	Without loss of generality, we can assume that $\overline{A_{i_0}^2} \leq \varepsilon^2$ for some $1\leq i_0 \leq I$. Define
	\begin{equation*}
		\delta_i(x) = A_i(x) - \overline{A}_i \qquad \text{ and } \qquad \eta_j(x) = B_j(x) - \overline{B}_j
	\end{equation*}
	for $i=1,\ldots, I$ and $j=1,\ldots, J$.
%
%
	We will to consider two subcases due to the amount of  diffusion represented by the values of $\|\delta_i\|^2$ and $\|\eta_j\|^2$:
	\begin{itemize}[topsep=5pt, leftmargin=5mm]
		\item[(i)] (Diffusion is dominant.) 
		Consider that $\|\delta_{i_*}\|^2 \geq \frac{\varepsilon^2}{\alpha_{i_0}}$ for some $1\leq i_*\leq I$ or $\|\eta_{j_*}\|^2 \geq \frac{\varepsilon^2}{\alpha_{i_0}}$ for some $1\leq {j}_{*}\leq J$.
		Then, thanks to the Poincar\'{e} inequality, the left hand side of \eqref{lowerbound_single} obviously bounded below by
		\begin{equation}\label{tt5}
		\sum_{i=1}^{I}\|\nabla A_i\|^2 + \sum_{j=1}^{J}\|\nabla B_j\|^2 + \left(\overline{\boldsymbol A}^{\aa} - \overline{\boldsymbol B}^{\bb}\right)^2
		\geq C_P\biggl(\sum_{i=1}^I\|\delta_i\|^2 + \sum_{j=1}^{J}\|\eta_j\|^2\biggr)\geq \frac{C_P\varepsilon^2}{\alpha_{i_0}} \geq \frac{C_p\varepsilon^2}{\max_{i}\alpha_i}.
		\end{equation}
		\item[(ii)] (Reactions are dominant.)
		Consider that $\|\delta_i\|^2 \leq \frac{\varepsilon^2}{\alpha_{i_0}}$ for all $i = 1,\ldots,I$ and $\|\eta_j\|^2 \leq \frac{\varepsilon^2}{\alpha_{i_0}}$ for all $j=1,\ldots,J$. Then, by using the mass conservation \eqref{ConsMij}, 							\begin{equation*}\label{e34}
		\frac{\overline{A_{i_0}^2}}{\alpha_{i_0}} + \frac{\overline{B_j^2}}{\beta_j} = M_{i_0,j},
		\end{equation*}
		we have with $\overline{A_{i_0}^2} \leq \varepsilon^2$
		\begin{equation*}\label{e35}
		\overline{B_j^2} = \beta_j\biggl(M_{i_0,j} - \frac{\overline{A_{i_0}^2}}{\alpha_{i_0}}\biggr) \geq \beta_j\left(M_{i_0,j} - \frac{\varepsilon^2}{\alpha_{i_0}}\right),
		\end{equation*}
		for all $j = 1, \ldots, J$. Hence, for all $j = 1,\ldots, J$,
		\begin{equation*}\label{e36}
		\overline{B_j}^2 = \overline{B_j^2} - \|\eta_j\|^2 \geq \beta_jM_{i_0,j} - \frac{\beta_j+1}{\alpha_{i_0}}\varepsilon^2.
		\end{equation*}
		Then, we estimate the left hand side of \eqref{lowerbound_single} with an elementary inequality $(a-b)^2 \geq \frac{1}{2}a^2 - b^2$, $\overline{A_{i_0}^2} \leq \varepsilon^2$ and $\overline{A_i}^{2\alpha_i}\le \overline{A_i^2}^{\alpha_i} \le (\alpha_i M_{i,1})^{\alpha_i}$ for all $i\neq i_0$ as follows:
		\begin{equation}\label{e37_0}
		\begin{aligned}
		\biggl(\,\prod\limits_{i=1}^{I}\overline{A_i}^{\alpha_i} - \prod\limits_{j=1}^{J}\overline{B_j}^{\beta_j}\biggr)^2
		&\geq \frac{1}{2}\prod\limits_{j=1}^{J}\overline{B_j}^{2\beta_j} - \prod\limits_{i=1}^{I}\overline{A_i}^{2\alpha_i}\\
		&\geq \frac{1}{2}\prod\limits_{j=1}^{J}\left[\beta_jM_{i_0,j} - \frac{\beta_j+1}{\alpha_{i_0}}\varepsilon^2\right]^{\beta_j} - \varepsilon^2\prod\limits_{i=1,i\not=i_0}^{I}(\alpha_i M_{i,1})^{\alpha_i}.
		\end{aligned}
		\end{equation}
		We now choose $\varepsilon$ small enough such that both inequalities
		\begin{equation*}
		\beta_jM_{i_0,j} - \frac{\beta_j+1}{\alpha_{i_0}}\varepsilon^2 \geq \frac{\beta_j M_{i_0,j}}{2}  \quad \text{ for all } \quad j = 1,\ldots, J
		\end{equation*}
		and
		\begin{equation}\label{e37_1}
		\frac{1}{2}\prod\limits_{j=1}^{J}\left[\frac{\beta_jM_{i_0,j}}{2}\right]^{\beta_j} - \varepsilon^2\prod\limits_{i=1,i\not=i_0}^{I}(\alpha_i M_{i,1})^{\alpha_i} \geq \frac{1}{4}\prod\limits_{j=1}^{J}\left[\frac{\beta_jM_{i_0,j}}{2}\right]^{\beta_j} \geq \frac{1}{4}\min_{i}\prod\limits_{j=1}^{J}\left[\frac{\beta_jM_{i,j}}{2}\right]^{\beta_j}
		\end{equation}
		hold, e.g. $\varepsilon$ fulfills
		\begin{equation}\label{e38}
		\varepsilon^2 \leq \min\biggl\{\min\limits_{1\leq j\leq J}\left\{\frac{\alpha_{i_0}\beta_jM_{i_0,j}}{4(\beta_j +1)}\right\}; \frac 14\biggl(\prod\limits_{i=1,i\not=i_0}^{I} (\alpha_i M_{i,1})^{\alpha_i}\biggr)^{-1}\prod\limits_{j=1}^{J}\left[\frac{\beta_jM_{i_0,j}}{2}\right]^{\beta_j}\biggr\}.
		\end{equation}
		It then follows from \eqref{e37_0} and \eqref{e37_1} that
		\begin{equation}\label{e37}
		\sum_{i=1}^{I}\|\nabla A_i\|^2 + \sum_{j=1}^{J}\|\nabla B_j\|^2 + \left(\overline{\boldsymbol{A}}^{\aa} - \overline{\boldsymbol{B}}^{\bb}\right)^2
		\geq \frac{1}{4}\min_{i}\prod\limits_{j=1}^{J}\left[\frac{\beta_jM_{i,j}}{2}\right]^{\beta_j}
		\end{equation}
		Combining \eqref{tt5} and \eqref{e37} we obtain the desired inequality \eqref{lowerbound_single}.
	\end{itemize}						
\end{proof}
\begin{proof}[Proof of Theorem \ref{mainsingle}]
It follows from Lemma \ref{Equilibrium} that the assumptions $(\mathbf{A1})$ and $(\mathbf{A2})$ hold for system \eqref{SysSingle}.	
	
From Theorem \ref{the:main}  for general systems and the two Lemmas \ref{InterInequal} and \ref{lem:lowerbound_single}, we obtain the desired entropy entropy-dissipation estimate \eqref{EEDsingle} for an explicit constant $\lambda_1>0$.			

The existence of a global (in time) renormalised solution $(\boldsymbol{a}, \boldsymbol{b})$, 
\textcolor{black}{and the fact that any renormalised solution satisfies the weak entropy entropy-dissipation inequality \eqref{weak_entropy}, follow from \cite{Fi14,Fis16}.
Then,} thanks to the inequality \eqref{EEDsingle} and a suitable Gronwall argument, see e.g. \cite{Wil} or more specifically \cite{FL}, we obtain the convergence to equilibrium in relative entropy
\begin{equation*}
	\mathcal{E}(\boldsymbol{a}(t), \boldsymbol{b}(t)) - \mathcal{E}(\boldsymbol{a}_\infty, \boldsymbol{b}_{\infty}) \leq e^{-\lambda_1t}(\mathcal{E}(\boldsymbol{a}_0, \boldsymbol{b}_0) - \mathcal{E}(\boldsymbol{a}_\infty, \boldsymbol{b}_{\infty})).
\end{equation*}
Finally, $L^1$-convergence to equilibrium of renormalised solutions follows from the Csisz\'{a}r-Kullback-Pinsker inequality in Lemma \ref{CKP}.
%
\end{proof}

\section{Reversible enzyme reactions - Proof of Theorem \ref{mainChain}}\label{sec:4}

In this section, we demonstrate the strategy in Section \ref{sec:2} for a chain of two reversible reactions modelling, for instance, reversible enzyme reactions. More precisely, we consider the system
\begin{equation}\label{enzyme}
	\mathcal{C}_1 + \mathcal{C}_2 \leftrightharpoons \mathcal{C}_3 \leftrightharpoons \mathcal{C}_4 + \mathcal{C}_5.
\end{equation}
In \cite{BiCoDe07} and \cite{BoPie10}, this reaction was studied in the context of performing a quasi-steady-state-approximation, i.e. that the releasing reaction rate constant from $\mathcal{C}_3$ to $\mathcal{C}_1+ \mathcal{C}_2$ and from $\mathcal{C}_3$ to $\mathcal{C}_4+\mathcal{C}_5$ are taken to diverge to infinitely. 
Here in contrast, for the sake of clear presentation we shall assume all the reaction constants to be one (this is without loss of generality since our proof works equally in the case of any positive reaction rate constants).

As in previous sections, we assume the reaction to occur on a bounded domain $\Omega\subset \mathbb{R}^n$ with smooth boundary $\partial\Omega$ and normalised volume $|\Omega| = 1$. By applying the law of mass action, the corresponding reaction-diffusion system of \eqref{enzyme} reads as
\begin{equation}\label{ChainReaction}
	\begin{cases}
		\partial_tc_1 - d_1\Delta c_1 = -c_1c_2 + c_3, &\text{ in } \Omega\times\mathbb{R}_+, \\
		\partial_tc_2 - d_2\Delta c_2 = -c_1c_2 + c_3, &\text{ in } \Omega\times\mathbb{R}_+, \\
		\partial_tc_3 - d_3\Delta c_3 = c_1c_2 + c_4c_5 - 2c_3, \quad&\text{ in } \Omega\times\mathbb{R}_+,\\
		\partial_tc_4 - d_4\Delta c_4 = -c_4c_5 + c_3, &\text{ in } \Omega\times\mathbb{R}_+,\\
		\partial_tc_5 - d_5\Delta c_5 = -c_4c_5 + c_3, &\text{ in } \Omega\times\mathbb{R}_+,\\
		\nabla c_i\cdot \nu =  0, \qquad\qquad i=1,2,\ldots, 5,\quad &\text{ on } \partial\Omega\times\mathbb{R}_+,\\
		c_i(x,0) = c_{i,0}(x), &\text{ in } \Omega,
	\end{cases}
\end{equation}
where $d_i>0$ for all $i=1,2,\ldots, 5$ are positive diffusion coefficients.
\medskip

This section is organised as follows: We first derive the mass conservation laws for \eqref{ChainReaction}, which play an essential role in our method. Then, we show that \eqref{ChainReaction} satisfies the assumptions $(\mathbf{A1})$ and $(\mathbf{A2})$. Finally, we apply the method presented in Section \ref{sec:2} to prove explicit convergence to equilibrium for \eqref{ChainReaction}. For the sake of convenience, we will denote by $\boldsymbol{c} = (c_1, c_2, c_3, c_4, c_5)$.
We begin by stating the mass conservation laws. The proof of the following Lemma \ref{masschain} is straightforward and thus omitted.
\begin{lemma}[Conservation laws for \eqref{ChainReaction}]\label{masschain}	\hfill\\
For $i\in \{1,2\}$ and $j\in \{4,5\}$, we have (after recalling the notation $\overline{c_i}(t) = \int_{\Omega}c_i(x,t)dx$)
\begin{equation*}\label{Chain_Mass}
\overline{c_i}(t) + \overline{c_j}(t) + \overline{c_3}(t) = \overline{c_{i,0}} + \overline{c_{j,0}} + \overline{c_{3,0}} =: M_{i,j},	\qquad \text{for all}\quad  t>0.
\end{equation*}
Among these four conservation laws, there are exactly three linear independent conservation laws.		
Moreover, in terms of the notations \eqref{SS} and \eqref{g9}, the matrix $\mathbb{Q}$ can be chosen as
		\begin{equation*}
					\mathbb{Q} = \begin{pmatrix}
								1&0&1&1&0\\
								1&0&1&0&1\\
								0&1&1&1&0\\
					\end{pmatrix}
					\in \mathbb{R}^{3\times 5}.
		\end{equation*}
\end{lemma}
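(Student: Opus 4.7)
My plan is to organise the verification into three elementary steps, each carried out directly from the system \eqref{ChainReaction}.

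\textbf{Step 1 (the four partial mass identities).} For each pair $(i,j) \in \{1,2\}\times\{4,5\}$, I would add the three PDEs for $c_i$, $c_3$ and $c_j$ in \eqref{ChainReaction}. The key observation is that the sum of reaction terms vanishes in every case; for $(i,j)=(1,4)$ one has $(-c_1c_2+c_3)+(c_1c_2+c_4c_5-2c_3)+(-c_4c_5+c_3)=0$, and the remaining three combinations are identical after exchanging the symmetric roles of $c_1 \leftrightarrow c_2$ and $c_4 \leftrightarrow c_5$ in \eqref{ChainReaction}. The summed equation thus reduces to the pure divergence identity $\partial_t(c_i+c_3+c_j) = \Delta(d_ic_i + d_3c_3 + d_jc_j)$. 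Integrating over $\Omega$ and invoking the homogeneous Neumann condition via the divergence theorem eliminates the diffusion contribution, yielding $\tfrac{d}{dt}(\overline{c_i}+\overline{c_3}+\overline{c_j})=0$ for all $t>0$; evaluation at $t=0$ supplies the constant $M_{i,j}$.

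\textbf{Step 2 (exactly three linearly independent laws).} Writing $q_{i,j} = \boldsymbol{e}_i + \boldsymbol{e}_3 + \boldsymbol{e}_j \in \mathbb{R}^5$ for the four coefficient vectors attached to the identities of Step 1, I would exhibit the single linear relation $q_{1,4} - q_{1,5} - q_{2,4} + q_{2,5} = 0$, which is immediate from the fact that both pairwise differences $q_{1,4}-q_{1,5}$ and $q_{2,4}-q_{2,5}$ equal $(0,0,0,1,-1)$. This forces the rank of the family $\{q_{i,j}\}$ to be at most three. Conversely, the three rows of the matrix $\mathbb{Q}$ given in the statement, restricted to the columns indexed by $1$, $2$ and $4$, form a $3\times 3$ submatrix with non-zero determinant, so the rank is exactly three and the selected three conservation laws are linearly independent.

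\textbf{Step 3 (the defining relation $\mathbb{Q}\,\mathbf{R}(\boldsymbol{c})=0$).} The three rows of the stated $\mathbb{Q}$ are precisely the indicator vectors of the triples $\{1,3,4\}$, $\{1,3,5\}$ and $\{2,3,4\}$, so this algebraic identity reduces to three of the four cancellations of reaction terms already verified in Step 1.

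No step presents any genuine obstacle: the entire lemma is bookkeeping once the key observation of Step 1—that the reaction rates cancel on each triple $\{i,3,j\}$—is made. The only minor care point is the determinant check in Step 2, which is immediate. This is exactly why the authors declare the argument \emph{straightforward}.
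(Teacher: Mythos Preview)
Your proposal is correct and exactly the kind of straightforward verification the paper has in mind; indeed the paper declares the proof ``straightforward and thus omitted'' and gives no argument of its own. Your three steps (cancellation of the reaction terms on each triple $\{i,3,j\}$, the single linear dependence $q_{1,4}-q_{1,5}-q_{2,4}+q_{2,5}=0$, and the rank-three check via a $3\times3$ minor) constitute a clean and complete justification.
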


\begin{remark}
We denote by $(M_{1,4}, M_{1,5}, M_{2,4}, M_{2,5})\in \mathbb{R}^4_+$ the {vector of conserved masses. Note that the positive initial mass vector $\M\in\mathbb{R}^3_+$ is given once three linear independent coordinates of $(M_{1,4}, M_{1,5}, M_{2,4}, M_{2,5})\in \mathbb{R}^4_+$ are prescribed.} 
%
\end{remark}

\begin{lemma}[Detailed balance equilibrium]\label{Chain_Equi}\hfill\\
			For any given positive initial mass vector $\M\in \mathbb{R}^3_+$, there exists a unique positive detailed balance equilibrium $\boldsymbol{c}_{\infty} = (c_{1,\infty}, c_{2,\infty},\ldots, c_{5,\infty})$ to \eqref{ChainReaction} satisfying
			\begin{equation*}\label{Chain_Equi_Eq}
						\begin{cases}
									c_{1,\infty}c_{2,\infty}  = c_{3,\infty},\\
									c_{4,\infty}c_{5,\infty} = c_{3,\infty},\\
									c_{i,\infty} + c_{j,\infty} + c_{3,\infty} = M_{i,j}, \qquad \forall i\in\{1,2\},\quad \forall j\in \{4,5\}.
						\end{cases}
\end{equation*}
Consequently, the system \eqref{ChainReaction} satisfies the assumptions $(\mathbf{A1})$ and $(\mathbf{A2})$.
\end{lemma}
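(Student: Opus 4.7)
The plan is to reduce the five nonlinear equations defining $\cc_{\infty}$ to a single scalar monotone equation in the variable $c_{3,\infty}$. Subtracting the conservation law for $(i,j)=(2,4)$ from that of $(1,4)$ yields the fixed difference $c_{1,\infty}-c_{2,\infty} = M_{1,4}-M_{2,4} =: p$, and similarly $c_{4,\infty}-c_{5,\infty} = M_{1,4}-M_{1,5}=:q$. Combining $c_{1,\infty}-c_{2,\infty}=p$ with $c_{1,\infty}c_{2,\infty}=c_{3,\infty}$ uniquely determines
\[
c_{1,\infty}(c_{3,\infty})=\frac{p+\sqrt{p^2+4c_{3,\infty}}}{2},\qquad c_{2,\infty}(c_{3,\infty})=\frac{-p+\sqrt{p^2+4c_{3,\infty}}}{2},
\]
and analogous formulas for $c_{4,\infty},c_{5,\infty}$ in terms of $q$ and $c_{3,\infty}$. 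Each of these four quantities is nonnegative, continuous, and strictly increasing in $c_{3,\infty}\ge0$.

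Substituting into the remaining conservation law $c_{1,\infty}+c_{4,\infty}+c_{3,\infty}=M_{1,4}$ produces a strictly increasing continuous scalar function $F(c_{3,\infty})$ which I need to show takes value strictly less than $M_{1,4}$ at $c_{3,\infty}=0$ and diverges as $c_{3,\infty}\to\infty$. The second property is immediate. For the first, note $F(0)=\max(p,0)+\max(q,0)$, and I must exploit the linear relation $M_{1,4}+M_{2,5}=M_{1,5}+M_{2,4}$ (a consequence of Lemma \ref{masschain}) to rewrite, in each sign pattern of $p,q$, the quantity $M_{1,4}-F(0)$ as one of $M_{2,5}$, $M_{2,4}$, $M_{1,5}$, or $M_{1,4}$. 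The positivity assumption $\M\in\mathbb{R}^3_+$ then gives $F(0)<M_{1,4}$, yielding a unique positive root $c_{3,\infty}$ by the intermediate value theorem; back-substitution produces the unique positive equilibrium in \eqref{PosEquiChain}. This verifies (A1).

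For (A2), I argue by contradiction: suppose $\cc^*\in\partial[0,\infty)^5$ is an equilibrium. If $c_1^*=0$ or $c_2^*=0$ then the mass-action condition $c_1^*c_2^*=c_3^*$ forces $c_3^*=0$, and then $c_4^*c_5^*=c_3^*=0$ forces either $c_4^*=0$ or $c_5^*=0$; pairing whichever of $c_1^*,c_2^*$ vanishes with whichever of $c_4^*,c_5^*$ vanishes in the conservation law $c_i^*+c_j^*+c_3^*=M_{i,j}$ gives $M_{i,j}=0$, contradicting positivity of $\M$. The case $c_3^*=0$ propagates similarly to one of the previous cases, and $c_4^*=0$ or $c_5^*=0$ is symmetric to $c_1^*=0$ or $c_2^*=0$. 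Hence no boundary equilibrium exists and (A2) holds.

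The only step requiring real care is the boundary value analysis of $F(0)$: the four sign cases for $(p,q)$ must each be handled using the right instance of the linear dependence among the four $M_{i,j}$'s. Everything else is monotonicity and a clean case distinction on which coordinate of $\cc^*$ is assumed to vanish.
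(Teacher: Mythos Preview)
Your argument is correct and follows the same overall strategy as the paper, which simply states that the proof proceeds ``from similar calculations as the proof of Lemma \ref{Equilibrium}'' and omits the details. Both approaches reduce the system to a single scalar monotone equation via the conservation laws and then invoke the intermediate value theorem; you parametrise by $c_{3,\infty}$ whereas a direct adaptation of Lemma \ref{Equilibrium} would parametrise by one of the outer concentrations (say $c_{1,\infty}$), but this is only a cosmetic difference. One small remark: your case analysis for $F(0)<M_{1,4}$ and your (\textbf{A2}) argument both implicitly use that \emph{all four} masses $M_{i,j}$ are positive, not just the three independent ones; this is indeed how the paper sets things up (it writes $(M_{i,j})\in\mathbb{R}^4_+$ in the remark preceding Lemma \ref{Chain_Equi}), so the argument goes through, but it is worth stating this explicitly since positivity of three components alone does not force positivity of the fourth via the linear relation $M_{1,4}+M_{2,5}=M_{1,5}+M_{2,4}$.
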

\begin{proof}
The proof of this Lemma follows from similar calculations as the proof of Lemma \ref{Equilibrium} and is omitted.
\end{proof}

To prove the convergence to the equilibrium for \eqref{ChainReaction}, we consider the entropy functional
\begin{equation*}\label{Chain_Entropy}
	\mathcal{E}(\boldsymbol{c}) = \sum_{i=1}^{5}\int_{\Omega}(c_i\log{c_i} - c_i + 1)dx
\end{equation*}
and its entropy-dissipation
\begin{equation*}\label{Chain_EnDiss}
	\mathcal{D}(\boldsymbol{c}) = \sum_{i=1}^{5}\int_{\Omega}d_i\frac{|\nabla c_i|^2}{c_i}dx + \int_{\Omega}\left((c_1c_2 - c_3)\log{\frac{c_1c_2}{c_3}} + (c_4c_5 - c_3)\log{\frac{c_4c_5}{c_3}}\right)dx.
\end{equation*}

\begin{lemma} \label{Chain_InterInequal}
	Let $\mu_1,\ldots, \mu_5 \in [-1,+\infty)$ satisfy the following conservation laws
	\begin{equation}\label{mass_1}
		C_{1,\infty}^2\mu_1(\mu_1+2) + C_{3,\infty}^2\mu_3(\mu_3+2)  + C_{4,\infty}^2\mu_4(\mu_4+2) = 0,
	\end{equation}
	\begin{equation}\label{mass_2}
		C_{1,\infty}^2\mu_1(\mu_1+2) + C_{3,\infty}^2\mu_3(\mu_3+2)  + C_{5,\infty}^2\mu_5(\mu_5+2) = 0,
	\end{equation}
	and
	\begin{equation}\label{mass_3}
		C_{2,\infty}^2\mu_2(\mu_2+2) + C_{3,\infty}^2\mu_3(\mu_3+2)  + C_{4,\infty}^2\mu_4(\mu_4+2) = 0.
	\end{equation}
	These conservation laws correspond to those of Lemma \ref{masschain} (recalling the perturbation argument \eqref{g21}, i.e. $\overline{C_i^2} = C_{i,\infty}^2(1+\mu_i)^2$ for $i=1,\ldots, 5$).
	
	Then, there holds 
	\begin{equation}\label{ttt}
	\begin{gathered}
	\bigl((1+\mu_1)(1+\mu_2) -(1+\mu_3)\bigr)^2 + \bigl((1+\mu_4)(1+\mu_5) - (1+\mu_3)\bigr)^2
	\geq \frac{1}{12}\sum_{i=1}^{5}\mu_i^2.
	\end{gathered}
	\end{equation}
\end{lemma}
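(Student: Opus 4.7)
The plan is to perform a sign-based case analysis using the three conservation laws, and in each case bound $A^2 + B^2$ below via a Jensen-type inequality on a ``coherent'' reaction plus an auxiliary parallelogram-type estimate, where I abbreviate $A := (1+\mu_1)(1+\mu_2) - (1+\mu_3)$ and $B := (1+\mu_4)(1+\mu_5) - (1+\mu_3)$, and set $y_i := 1+\mu_i \geq 0$.

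First, I would extract sign relations from the conservation laws: subtracting \eqref{mass_1} from \eqref{mass_2} gives $C_{4,\infty}^2\mu_4(\mu_4+2) = C_{5,\infty}^2\mu_5(\mu_5+2)$, and \eqref{mass_1} minus \eqref{mass_3} gives $C_{1,\infty}^2\mu_1(\mu_1+2) = C_{2,\infty}^2\mu_2(\mu_2+2)$. Since $\mu_i \geq -1$ forces $\mu_i+2 > 0$, the quantity $\mu_i(\mu_i+2)$ shares the sign of $\mu_i$, hence $\mu_1,\mu_2$ have the same sign and $\mu_4,\mu_5$ have the same sign. This reduces the analysis to three essential patterns: both pairs non-negative (Case 1), both pairs non-positive (Case 2), or one pair non-negative and the other non-positive (Case 3, mixed).

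In Case 1, the conservation law \eqref{mass_1} forces $\mu_3 \leq 0$, so $y_1,y_2,y_4,y_5 \geq 1 \geq y_3$. Then $A \geq y_1 y_2 - 1 \geq y_1 - 1 = \mu_1$, and analogously $A \geq \mu_2$ and $A \geq -\mu_3$, hence $A^2 \geq \tfrac{1}{3}(\mu_1^2+\mu_2^2+\mu_3^2)$. The same reasoning yields $B^2 \geq \tfrac{1}{3}(\mu_3^2+\mu_4^2+\mu_5^2)$, and summing gives $A^2+B^2 \geq \tfrac{1}{3}\sum_i\mu_i^2$, well beyond the desired $\tfrac{1}{12}\sum_i \mu_i^2$. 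Case 2 is handled symmetrically: here $\mu_3 \geq 0$ and $y_i \leq 1$ for $i \in \{1,2,4,5\}$, so $-A, -B \geq 0$ admit the analogous Jensen-type lower bounds.

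The main obstacle is the mixed Case 3, say $\mu_1,\mu_2 \geq 0$ and $\mu_4,\mu_5 \leq 0$ (the opposite sign configuration is symmetric). Here the sign of $\mu_3$ is not determined by the conservation laws, so I would split on it. If $\mu_3 \leq 0$, the argument of Case 1 applied to the first reaction still gives $A^2 \geq \tfrac{1}{3}(\mu_1^2+\mu_2^2+\mu_3^2)$; to cover the indices $4,5$ I would invoke the parallelogram identity $A^2+B^2 \geq \tfrac{1}{2}(A-B)^2 = \tfrac{1}{2}(y_1 y_2 - y_4 y_5)^2$. Since $y_1,y_2 \geq 1$ and $y_4,y_5 \in [0,1]$, one has $A-B \geq 1 - y_4 y_5 \geq 1 - y_4 = -\mu_4$ (and similarly $A-B \geq -\mu_5$), so $(A-B)^2 \geq \tfrac{1}{2}(\mu_4^2+\mu_5^2)$ and therefore $A^2+B^2 \geq \tfrac{1}{4}(\mu_4^2+\mu_5^2)$. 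Taking the convex combination with weight $\tfrac{1}{4}$ on the first bound and $\tfrac{3}{4}$ on the second yields
\[
A^2+B^2 \geq \tfrac{1}{12}(\mu_1^2+\mu_2^2+\mu_3^2) + \tfrac{3}{16}(\mu_4^2+\mu_5^2) \geq \tfrac{1}{12}\sum_{i=1}^5 \mu_i^2.
\]
The remaining subcase $\mu_3 \geq 0$ follows by the symmetric argument, namely the coherent bound $B^2 \geq \tfrac{1}{3}(\mu_3^2+\mu_4^2+\mu_5^2)$ (now $y_3 \geq 1 \geq y_4 y_5$) combined with $(A-B)^2 \geq \tfrac{1}{2}(\mu_1^2+\mu_2^2)$ (since $y_1,y_2 \geq 1$, $y_4,y_5 \leq 1$ give $A - B \geq y_1 y_2 - 1 \geq \mu_1$ and $\geq \mu_2$).

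The explicit constant $\tfrac{1}{12} = \tfrac{1}{3}\cdot\tfrac{1}{4}$ arises precisely by combining the Jensen-type factor $\tfrac{1}{3}$ (three indices per coherent reaction) with the parallelogram factor $\tfrac{1}{4}$, and it is this mixed case that pins down the value; the conservation laws are used only qualitatively via the sign relations above, so the resulting bound is independent of $C_{1,\infty},\ldots,C_{5,\infty}$.
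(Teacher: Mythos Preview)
Your proof is correct and follows essentially the same strategy as the paper: extract the sign relations $\mu_1\sim\mu_2$ and $\mu_4\sim\mu_5$ from the conservation laws, then bound $A^2$, $B^2$, and $(A-B)^2$ according to the sign pattern of $(\mu_1,\mu_3,\mu_4)$. The only difference is organizational: the paper first writes the uniform decomposition $A^2+B^2\ge\tfrac{1}{4}\bigl(A^2+B^2+(A-B)^2\bigr)$ and then observes via a sign table that in every admissible configuration at least two of the three conditional estimates (your ``coherent'' bounds and the cross-term bound) apply, whereas you run the case split directly and assemble the result via a convex combination; the ingredients and the final constant $\tfrac{1}{12}$ are the same.
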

\begin{proof}
	This inequality is similar to \eqref{Inter1}. However, due to the different structure of mass conservation laws, we need to use a different proof.
	
	By the triangle inequality, the left hand side of \eqref{ttt} is bounded below by
	\begin{multline}\label{ttt_1}
	\bigl((1+\mu_1)(1+\mu_2) -(1+\mu_3)\bigr)^2 + \bigl((1+\mu_4)(1+\mu_5) - (1+\mu_3)\bigr)^2\\
	\geq \frac{1}{4}\biggl[\underbrace{\bigl((1+\mu_1)(1+\mu_2) -(1+\mu_3)\bigr)^2}_{=:G_1} + \underbrace{\bigl((1+\mu_4)(1+\mu_5) - (1+\mu_3)\bigr)^2}_{=:G_2}\\
	+ \underbrace{\bigl((1+\mu_1)(1+\mu_2) - (1+\mu_4)(1+\mu_5)\bigr)^2}_{=:G_3}\biggr]
	\end{multline}
	From the conservation laws \eqref{mass_1} and \eqref{mass_3} 
	we have
	\begin{equation*}\label{t26}
	C_{1,\infty}^2\,\mu_1(\mu_1 + 2) = C_{2,\infty}^2\,\mu_2(\mu_2 + 2).
	\end{equation*}
	Moreover, since $\mu_1,\mu_2\in [-1,+\infty)$, it follows that $\mu_1$ and $\mu_2$ have always a same sign. Similarly, $\mu_4$ and $ \mu_5$ have always a same sign. We have the following useful estimates
	\begin{itemize}
		\item If $\mu_1$ and $\mu_3$ have different signs, then 
		\begin{equation*}\tag{G1}
		G_1 \geq \frac{1}{2}(\mu_1^2 + \mu_2^2 + \mu_3^2).
		\end{equation*}
		Indeed, thanks to $\mu_1\mu_3 \leq 0\leq \mu_1\mu_2$ and  $(1+\mu_1) \geq 0$, we have $(\mu_1 - \mu_3)\mu_2(1+\mu_1) \geq 0$. Hence
		\begin{equation*}
		G_1 = [\mu_1 - \mu_3 + \mu_2(1+\mu_1)]^2 \geq (\mu_1 - \mu_3)^2 \geq \mu_1^2 + \mu_3^2.
		\end{equation*}
		Similarly we get $G_1 \geq \mu_2^2 + \mu_3^2$ and consequently that the estimate (G1) holds.
		\medskip	
		\item If $\mu_4$ and $\mu_3$ have different signs then
		\begin{equation*}\tag{G2}
		G_2 \geq \frac{1}{2}(\mu_4^2 + \mu_5^2 + \mu_3^2).
		\end{equation*}
		This can be proved in the same way as (G1).
		\medskip
		\item If $\mu_1$ and $\mu_4$ have different signs then 
		\begin{equation*}\tag{G3}
		G_3 \geq \frac{1}{3}(\mu_1^2 + \mu_2^2 + \mu_4^2 + \mu_5^2).
		\end{equation*}
		Note that in this case $\mu_1\mu_4 \leq 0\leq \mu_1\mu_2,  \mu_4\mu_5$ and  $(1+\mu_1), (1+\mu_4) \geq 0$ and we have always  
		\begin{equation*}
		(\mu_1 - \mu_4)\mu_2(1+\mu_1) \geq 0, \quad -(\mu_1 - \mu_4)\mu_5(1+\mu_4) \geq 0\; \text{ and } \; -\mu_2\mu_5(1+\mu_1)(1+\mu_4) \geq 0.
		\end{equation*}
		Therefore,
		\begin{equation*}
		G_3 = [\mu_1 - \mu_4 + \mu_2(1+\mu_1) - \mu_5(1+\mu_4)]^2 \geq (\mu_1 - \mu_4)^2 \geq \mu_1^2 + \mu_4^2.
		\end{equation*}
		Similarly we have $G_3 \geq \mu_1^2 + \mu_5^2$ and $G_3\geq \mu_2^2 + \mu_4^2$ and consequently that (G3) holds.
	\end{itemize}		
	\medskip
	From the conservation law \eqref{mass_1} we have the following possibilities concerning the signs of $\mu_1, \mu_3$ and $\mu_4$
	\begin{center}
		\begin{tabular}{ |c|c|c|c| }
			\hline
			Case & $\mu_1$ & $\mu_3$ & $\mu_4$ \\ \hline
			(I) & -  &  - & + \\
			\hline
			(II) & - & +  & -\\
			\hline
			(III) & - & + & + \\ 
			\hline
			(IV) & + & - & - \\
			\hline
			(V) & + & - & +\\
			\hline
			(VI) & + & + & -\\
			\hline
		\end{tabular}
	\end{center}
	We see that in all these cases, there are always two of the three estimates (G1), (G2) and (G3) that hold. Therefore, thanks to \eqref{ttt_1}, we obtain the desired estimate \eqref{ttt}.
\end{proof}
\begin{lemma}\label{lem:lowerbound_chain}
{Under the assumptions of Theorem \ref{mainChain}}, there exists $0<\varepsilon\ll 1$ such that, if $\overline{C_i^2} \leq \varepsilon^2$ for some $i=1,\ldots, 5$, then
	\begin{equation}\label{lowerbound_chain}
		\sum_{i=1}^{5}\|\nabla C_i\|^2 + (\overline{C}_1\overline{C}_2 - \overline{C}_3)^2 + (\overline{C}_4\overline{C}_5 - \overline{C}_3)^2 \geq H_5^{chain}
	\end{equation}
	where
	\begin{equation}\label{H5_chain}
		H_5^{chain} = \min\left\{\frac{C_P\varepsilon^2}{2}; \frac{M_{1,5}}{32}; \frac{M_{1,4}M_{1,5}}{512}; \frac{M_{2,5}^2}{256} \right\}.
	\end{equation}
\end{lemma}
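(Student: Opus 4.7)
The plan is to mimic the two-level case split from the proof of Lemma \ref{lem:lowerbound_single}: first split according to whether the Dirichlet-type dissipation or the algebraic reaction dissipation on the left-hand side of \eqref{lowerbound_chain} is dominant, and then, in the reaction-dominant branch, split further according to the identity of the index $i_0\in\{1,\ldots,5\}$ for which $\overline{c_{i_0}}=\overline{C_{i_0}^2}\le\varepsilon^2$. Throughout, write $\delta_i(x)=C_i(x)-\overline{C_i}$, so that $\|\delta_i\|^2=\overline{C_i^2}-\overline{C_i}^2$, and note by Cauchy--Schwarz that $\overline{C_i}\le\sqrt{\overline{c_i}}\le\sqrt{K}$.

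First, if $\|\delta_j\|^2\ge\varepsilon^2/2$ for some $j$, Poincar\'e's inequality immediately gives $\|\nabla C_j\|^2\ge C_P\|\delta_j\|^2\ge C_P\varepsilon^2/2$, which accounts for the first term in \eqref{H5_chain}. Otherwise all $\|\delta_j\|^2\le\varepsilon^2/2$, which means $\overline{c_j}-\varepsilon^2/2\le\overline{C_j}^2\le\overline{c_j}$; in particular, each $\overline{C_j}$ agrees with $\sqrt{\overline{c_j}}$ up to an $O(\varepsilon)$ correction. The task is then to bound one of the two reaction terms $(\overline{C_1}\overline{C_2}-\overline{C_3})^2$ or $(\overline{C_4}\overline{C_5}-\overline{C_3})^2$ from below using solely the mass conservation laws \eqref{MassChain}, i.e.\ $\overline{c_i}+\overline{c_3}+\overline{c_j}=M_{i,j}$ for $(i,j)\in\{1,2\}\times\{4,5\}$.

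If $i_0=3$, then $\overline{C_3}\le\varepsilon$ and $\overline{c_i}+\overline{c_j}\ge M_{i,j}-\varepsilon^2$ for every such $(i,j)$. Combined with the affine constraints $\overline{c_1}-\overline{c_2}=M_{1,4}-M_{2,4}$ and $\overline{c_4}-\overline{c_5}=M_{1,4}-M_{1,5}$ (obtained by subtracting conservation laws), a short finite case check shows that at least one of the pairs $(\overline{c_1},\overline{c_2})$ or $(\overline{c_4},\overline{c_5})$ has both entries bounded below by positive constants depending only on the $M_{i,j}$; the corresponding product $\overline{C_1}\overline{C_2}$ or $\overline{C_4}\overline{C_5}$ is then bounded away from $\overline{C_3}\le\varepsilon$, yielding the $M_{1,4}M_{1,5}/512$-type contribution. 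If instead $i_0\in\{1,2\}$, say $i_0=1$, then $\overline{C_1}\overline{C_2}\le\varepsilon\sqrt{K}$ is small, so
\[
(\overline{C_1}\overline{C_2}-\overline{C_3})^2\ge\tfrac{1}{2}\overline{C_3}^2-\varepsilon^2 K,
\]
which gives the desired lower bound as soon as $\overline{c_3}$ is not too small. If, on the contrary, $\overline{c_3}$ is small as well, then the conservation laws $\overline{c_4}=M_{1,4}-\overline{c_1}-\overline{c_3}$ and $\overline{c_5}=M_{1,5}-\overline{c_1}-\overline{c_3}$ force $\overline{c_4}$ and $\overline{c_5}$ close to $M_{1,4}$ and $M_{1,5}$ respectively, so $\overline{C_4}\overline{C_5}\gtrsim\sqrt{M_{1,4}M_{1,5}}$ while $\overline{C_3}$ remains small, and the second reaction term provides a lower bound of order $M_{1,5}$. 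The case $i_0\in\{4,5\}$ is handled symmetrically, producing the $M_{2,5}^2/256$ term. Finally, $\varepsilon$ is fixed explicitly (as in \eqref{e38} in the single-reaction argument) small enough that all $O(\varepsilon)$ corrections above do not erode the above lower bounds, giving $H_5^{chain}$ as in \eqref{H5_chain}.

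The main obstacle is the case $i_0=3$: one must use the compatibility relations between the four conserved masses $M_{i,j}$ to exclude a ``boundary-equilibrium-like'' configuration in which both $\overline{c_1}\overline{c_2}$ and $\overline{c_4}\overline{c_5}$ could shrink together with $\overline{c_3}$. Once this finite case split is settled, the remaining estimates are elementary consequences of the triangle inequality, AM--GM, and the Poincar\'e inequality, entirely analogous to the arguments used in the proof of Lemma \ref{lem:lowerbound_single}.
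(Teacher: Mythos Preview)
Your overall plan matches the paper's proof: the same diffusion-dominant versus reaction-dominant split via $\|\delta_i\|^2\gtrless\varepsilon^2/2$, followed by a case analysis on $i_0$ using the conservation laws. However, you have the constants assigned to the wrong subcases. In the paper, the contribution $M_{2,5}^2/256$ arises from $i_0=3$, not from $i_0\in\{4,5\}$: after normalising so that $M_{1,4}$ is the largest conserved mass (hence $\overline{c_1}\ge\overline{c_2}$ and $\overline{c_4}\ge\overline{c_5}$), the law $\overline{c_2}+\overline{c_3}+\overline{c_5}=M_{2,5}$ forces $\overline{c_2}\ge M_{2,5}/4$ or $\overline{c_5}\ge M_{2,5}/4$, and in either case both entries of the corresponding pair are then at least $M_{2,5}/4$, giving a squared-product bound of order $M_{2,5}^2$. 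Conversely, the constants $M_{1,5}/32$ and $M_{1,4}M_{1,5}/512$ come from $i_0\in\{1,2,4,5\}$ (the paper treats $i_0=1$ and invokes symmetry), not from $i_0=3$.

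There is also a minor technical point in your $i_0=1$ subcase ``$\overline{c_3}$ small'': you bound $(\overline{C_4}\,\overline{C_5}-\overline{C_3})^2$ directly, which requires the threshold $\theta$ on $\overline{c_3}$ to be chosen small compared to $M_{1,4}M_{1,5}$ and to $\min(M_{1,4},M_{1,5})$ simultaneously. This works, but the paper sidesteps the issue by passing via the triangle inequality to $\frac12(\overline{C_4}\,\overline{C_5}-\overline{C_1}\,\overline{C_2})^2$; since $\overline{C_1}\,\overline{C_2}\le\varepsilon\sqrt{K}$ is small independently of $\overline{c_3}$, one obtains the product bound $M_{1,4}M_{1,5}/512$ without any delicate coupling of the threshold to the masses. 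Your route is sound with enough care, but the paper's trick is what yields the stated constants directly.
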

\begin{remark}
	We can compute $\varepsilon$ explicitly, for example
	\begin{equation}\label{epsilon}
	\varepsilon^2 \leq \min\left\{\frac{M_{1,4}}{4}; \frac{M_{1,5}}{4}; \frac{M_{2,5}}{4}; \frac{M_{1,5}}{32M_{2,4}}; \frac{M_{1,4}M_{1,5}}{256M_{2,4}};  \frac{M_{2,5}^2}{256}\right\}.
	\end{equation}
\end{remark}
\begin{proof}[Proof of Lemma \ref{lem:lowerbound_chain}]
	Define the deviations
	\begin{equation*}
		\delta_i(x) = C_i(x) - \overline{C}_i \qquad \text{ for } x\in\Omega, \quad i=1,\ldots, 5.
	\end{equation*}
	We again consider two cases due to various contributions of diffusion and reaction terms.
	\begin{itemize} 
		\item[(i)] (Diffusion is dominant.) If $\|\delta_{i_*}\|^2\geq \varepsilon^2/2$ for some $i_*\in \{1,2,\ldots, 5\}$, then we can estimate
		\begin{equation}\label{lower}
		\text{LHS of (\ref{lowerbound_chain})} \geq C_P\sum_{i=1}^{5}\|\delta_i\|^2 \geq \frac{C_P\,\varepsilon^2}{2}.
		\end{equation}
		\item[(ii)] (Reactions are dominant.) Consider $\|\delta_i\|^2 \leq \varepsilon^2/2$ for all $i=1,2,\ldots, 5$.
		We recall $\overline{C_{i_0}^2} \leq \varepsilon^2$ for some $i_0\in \{1,2,\ldots, 5\}$ and remark that the roles of $C_1, C_2, C_4$ and $C_5$ in \eqref{lowerbound_chain} are the same. Therefore, without loss of generality, it is sufficient to investigate the two cases: $i_0 = 1$ and $i_0 = 3$:
		\begin{itemize}[topsep=5pt, leftmargin=3mm]
			\item[\tiny{$\blacklozenge$}] When $i_0=1$ then from the mass conservation
			\begin{equation*}
			\qquad\overline{C_1^2} + \overline{C_4^2} + \overline{C_3^2} = M_{1,4} \; \qquad\text{ and } \qquad \overline{C_1^2} + \overline{C_5^2} + \overline{C_3^2} = M_{1,5},
			\end{equation*}
			we get
			\begin{equation}\label{k8}
			\qquad\overline{C_3^2} + \overline{C_4^2} \geq M_{1,4} - \varepsilon^2 \geq \frac{M_{1,4}}{2} \qquad \text{ and } \qquad \overline{C_3^2} + \overline{C_5^2} \geq M_{1,5} - \varepsilon^2 \geq \frac{M_{1,5}}{2}.
			\end{equation}
			Without loss of generality, we assume that $M_{1,4} \geq M_{1,5}$. From \eqref{k8} we have the following three possibilities
			\vskip .3cm
			\begin{center}
				\begin{tabular}{ |c|c|c|c| }
					\hline
					Case & $\overline{C_3^2}$ & $\overline{C_4^2}$ & $\overline{C_5^2}$ \\ \hline
					(I) & $\overline{C_3^2}\geq \frac{M_{1,4}}{4}$ & $\leq \frac{M_{1,4}}{4}$ & $\leq \frac{M_{1,5}}{4}$\\
					\hline
					(II) & $\overline{C_3^2}\leq \frac{M_{1,5}}{4}$ & $\geq \frac{M_{1,4}}{4}$ & $\geq \frac{M_{1,5}}{4}$\\
					\hline
					(III) & $\frac{M_{1,5}}{4}\leq \overline{C_3^2} \leq \frac{M_{1,4}}{4}$ & $\geq \frac{M_{1,4}}{4}$ & $\leq \frac{M_{1,5}}{4}$\\
					\hline
				\end{tabular}
			\end{center}
			\vskip .3cm 
			
			In cases (I) and (III), we both have $\overline{C_3^2} \geq \frac{M_{1,5}}{4}$ and, thus
			\begin{equation*}
			\overline{C}_3^2 = \overline{C_3^2} - \|\delta_3\|^2 \geq \frac{M_{1,5}}{4} - \frac{\varepsilon^2}{2} \geq \frac{M_{1,5}}{8}.
			\end{equation*}
			Hence the left hand side of \eqref{lowerbound_chain} is estimated as 
			\begin{equation}
			\text{LHS of (\ref{lowerbound_chain})} \geq(\overline{C}_1\overline{C}_2 - \overline{C}_3)^2
			\geq \frac{1}{2}\overline{C}_3^2 - \overline{C}_1^2\overline{C}_2^2\geq \frac{M_{1,5}}{16} - \varepsilon^2M_{2,4} \geq \frac{M_{1,5}}{32}
			\label{k9}					
			\end{equation}
			thanks to \eqref{epsilon}.
			
			In case (II), we have
			\begin{equation*}
			\overline{C}_4^2 = \overline{C_4^2} - \|\delta_4\|^2 \geq \frac{M_{1,4}}{8} \quad \text{ and } \quad \overline{C}_5^2 = \overline{C_5^2} - \|\delta_5\|^2 \geq \frac{M_{1,5}}{8}.
			\end{equation*}
			We continue with
			\begin{equation}\label{k10}
			\begin{aligned}
			\text{LHS of (\ref{lowerbound_chain})}&\geq  (\overline{C}_1\overline{C}_2 - \overline{C}_3)^2 + (\overline{C}_4\overline{C}_5 - \overline{C}_3)^2
			\geq \frac{1}{2}(\overline{C}_4\overline{C}_5 - \overline{C}_1\overline{C}_2)^2\\
			&\geq \frac{1}{4}\overline{C}_4^2\overline{C}_5^2 - \frac{1}{2}\overline{C}_1^2\overline{C}_2^2 
			\geq \frac{M_{1,4} M_{1,5}}{256} - \frac{1}{2}\varepsilon^2M_{2,4} \geq \frac{M_{1,4}M_{1,5}}{512}
			\end{aligned}
			\end{equation}
			thanks again to \eqref{epsilon}. Combining \eqref{k9} and \eqref{k10}, we have 
			\begin{equation}\label{k11}
			\begin{gathered}
			\text{LHS of (\ref{lowerbound_chain})}
			\geq \min\left\{\frac{M_{1,5}}{32}; \frac{M_{1,4}M_{1,5}}{512}\right\}
			\end{gathered}
			\end{equation}
			which ends the proof in the case $i_0=1$.
			\item[\tiny{$\blacklozenge$}] When $i_0 = 3$, we obtain first that $\overline{C}_{3}^2 \leq \overline{C_{3}^2} \leq \varepsilon^2$.
			
			Without loss of generality, we can assume that $M_{1,4}$ is the biggest component of $\M$. Thus,
			\begin{equation*}
			\overline{C_1^2} = \overline{C_2^2} + M_{1,4} - M_{2,4} \geq \overline{C_2^2} \quad \text{ and } \quad \overline{C_4^2} = \overline{C_5^2} + M_{1,4} - M_{1,5} \geq \overline{C_5^2}.
			\end{equation*}
			By using the mass conservation $\overline{C_2^2} + \overline{C_3^2} + \overline{C_5^2} = M_{2,5}$, we get
			\begin{equation*}
			\overline{C_2^2} + \overline{C_5^2} \geq M_{2,5} - \varepsilon^2 \geq \frac{M_{2,5}}{2},
			\end{equation*}
			hence
			\begin{equation*}
			\overline{C_2^2} \geq \frac{M_{2,5}}{4} \quad \text{ or } \quad \overline{C_5^2} \geq \frac{M_{2,5}}{4}.
			\end{equation*}
			If $\overline{C_2^2} \geq \frac{M_{2,5}}{4}$ then $\overline{C_1^2} \geq \frac{M_{2,5}}{4}$. It follows that
			\begin{equation*}
			\overline{C}_1^2 = \overline{C_1^2} - \|\delta_1\|^2 \geq \frac{M_{2,5}}{8} \quad \text{ and } \quad
			\overline{C}_2^2 = \overline{C_2^2} - \|\delta_2\|^2 \geq \frac{M_{2,5}}{8}.
			\end{equation*}
			We can then estimate due to \eqref{epsilon}
			\begin{equation}\label{k12}
			\text{LHS of (\ref{lowerbound_chain})} \geq (\overline{C}_1\overline{C}_2 - \overline{C}_3)^2 \geq \frac{1}{2}\overline{C}_1^2\overline{C}_2^2 - \overline{C}_3^2
			\geq \frac{M_{2,5}^2}{128} - \varepsilon^2 \geq \frac{M_{2,5}^2}{256}.
			\end{equation}
			Similarly, if $\overline{C_5^2} \geq \frac{M_{2,5}}{8}$, we can prove by using the same arguments above that
			\begin{equation}\label{k13}
			\text{LHS of (\ref{lowerbound_chain})} \geq \frac{M_{2,5}^2}{256},
			\end{equation}
			which ends the case $i_0=3$.
		\end{itemize}
		From \eqref{lower}, \eqref{k11}, \eqref{k12} and \eqref{k13}, we obtain the desired estimate \eqref{lowerbound_chain}.
	\end{itemize}
\end{proof}
\vskip .2cm
\begin{proof}[Proof of Theorem \ref{mainChain}]
	First of all, the validity of assumptions $(\mathbf{A1})$ and $(\mathbf{A2})$ for system \eqref{SysChain} follows from Lemma \ref{Chain_Equi}.
	
	Then, the entropy entropy-dissipation estimate \eqref{l5} follows for a constant $\lambda_2$ from Theorem \ref{the:main}, Lemma \ref{Chain_InterInequal} and Lemma \ref{lem:lowerbound_chain}.	
From \eqref{SysChain}, we have
	\begin{equation*}
		\partial_t(c_1 + c_3 + c_4) - \Delta(d_1c_1 + d_3c_3 + d_4c_4) = 0
	\end{equation*}
	and
	\begin{equation*}
		\partial_t(c_2 + c_3 + c_5) - \Delta(d_2c_2 + d_3c_3 + d_5c_5) = 0.
	\end{equation*}
	Then, it follows from an improved duality estimate (see \cite[Lemma 3.2]{CDF14}, which holds in fact in all space dimensions) that for some $p>2$ (sufficiently close to 2)
	\begin{equation*}
		\|c_i\|_{L^p(0,T;L^p(\Omega))} \leq C(T) \quad\text{ for all } T>0
	\end{equation*}
where $C(T)$ is a constant, which grows at most polynomially in $T$. Since all the nonlinearities in \eqref{SysChain} are quadratic, we can apply results from e.g. \cite{Michel,DFPV} to ensure that there exists a global weak solution $\cc = (c_1, \ldots, c_5) \in L^p(0,T;L^p(\Omega))^5$ for all $T>0$.
Moreover, similarly to \cite{DFPV} each weak solution satisfies the weak entropy entropy-dissipation law
	\begin{equation*}
		\mathcal{E}(\cc(t)) + \int_s^t\mathcal{D}(\cc(\tau))d\tau = \mathcal{E}(\cc(s)) \quad  \text{ for a.a. } t>s>0.
	\end{equation*}	
	Hence, by applying the entropy entropy-dissipation inequality \eqref{l5} to the above entropy entropy-dissipation law, a Gronwall argument yields exponential convergence to equilibrium in relative entropy
	\begin{equation*}
		\mathcal{E}(\cc(t)) - \mathcal{E}(\ww) \leq e^{-\lambda_2t}(\mathcal{E}(\cc_0) - \mathcal{E}(\ww))
	\end{equation*}
	and finally, by using the Csisz\'{a}r-Kullback-Pinsker inequality in Lemma \ref{CKP}, we obtain exponential $L^1$-convergence of weak solutions to equilibrium
	\begin{equation*}
		\sum_{i=1}^{5}\|c_i(t) - c_{i,\infty}\|^2_{L^1(\Omega)} \leq C^{-1}_{CKP}(\mathcal{E}(\cc_0) - \mathcal{E}(\ww))e^{-\lambda_2t}\quad \text{ for all } t>0.
\end{equation*}
%
\end{proof}

\section{Summary, further applications and open problems}\label{sec:5}
In this paper, we exploit the entropy method to show explicit convergence to equilibrium for detailed balance chemical reaction-diffusion networks describing substances in a bounded domain $\Omega\subset \mathbb{R}^n$ according to the mass action law. More precisely, in Section \ref{subsec:2.2} we propose a constructive method to prove an entropy entropy-dissipation estimate with computable rates and constants. The corresponding If-Theorem \eqref{the:main} relies on two assumed functional inequalities \eqref{ODEEqui} and \eqref{FarEqui}, where the first one is a finite dimensional inequality and quantifies the uniqueness of the detailed balance equilibrium. The second inequality is a lower bound for the entropy dissipation when the concentration vector is close to the boundary $\partial\mathbb{R}^{I}_{+}$. These estimates suggest a general method for any general system of the form \eqref{SS}--\eqref{R}, once the mass conservation laws are explicitly known.
The applicability of the general method is demonstrated in Sections \ref{sec:3} and \ref{sec:4} 
for two specific types of reaction-diffusion networks:  
A single reversible reaction \eqref{single} and a chain of reversible reactions \eqref{ChainReaction}, which is motivated by enzyme reactions.

\subsection{Further applications}\label{further}
We point out  that the proposed approach applies also to reaction-diffusion networks where the chemical substances are supported on different spatial compartments. For example in \cite{BFL}, the reversible reaction 
\begin{equation*}
\alpha\, \mathcal{U} \leftrightharpoons \beta\, \mathcal{V}
\end{equation*}
is considered between a bounded domain $\Omega\subset\mathbb{R}^n$ and its smooth boundary $\Gamma:= \partial\Omega$, where $\mathcal{U}$ is the volume-substance inside $\Omega$ and $\mathcal{V}$ is the surface-substance on $\Gamma$ and the reaction is assumed to happen on $\Gamma$. The corresponding volume-surface reaction-diffusion system reads as
\begin{equation}\label{vol-surf}
\begin{cases}
u_t - d_u\Delta u = 0, &x\in\Omega, \quad t>0,\\
d_u\partial_{\nu}u = -\alpha(u^\alpha - v^\beta), &x\in\Gamma, \quad t>0,\\
v_t - d_v\Delta_{\Gamma}v = \beta(u^\alpha - v^\beta), &x\in\Gamma, \quad t>0,\\
u(0,x) = u_0(x), \quad v(0,x) = v_0(x),&x\in\Omega,
\end{cases}
\end{equation}
in which $u: \Omega\times \mathbb{R}_+ \rightarrow \mathbb{R}_+$ is the volume-concentration of $\mathcal{U}$ and $v: \Gamma\times \mathbb{R}_+ \rightarrow \mathbb{R}_+$ is the surface-concentration of $\mathcal{V}$, and $\Delta_{\Gamma}$ is the Laplace-Beltrami operator which describes diffusion of $\mathcal{V}$ along $\Gamma$. The system \eqref{vol-surf} possesses the mass conservation law
\begin{equation*}
									\beta\int_{\Omega}u(x,t)dx + \alpha\int_{\Gamma}v(x,t)dS = \beta\int_{\Omega}u_0(x)dx + \alpha\int_{\Gamma}v_0(x)dS =: M >0, \qquad \forall\ t\ge0
\end{equation*}
and thus obeys a unique positive equilibrium $(u_{\infty}, v_{\infty})$ satisfying
						\begin{equation*}
									\begin{cases}
												u_{\infty}^{\alpha} = v_{\infty}^{\beta},\\
												\beta|\Omega|u_{\infty} + \alpha|\Gamma|v_{\infty} = M.
									\end{cases}
						\end{equation*}
						To show the convergence to equilibrium for \eqref{vol-surf}, we consider the entropy functional
						\begin{equation*}
									\mathcal{E}(u,v) = \int_{\Omega}(u\log u - u + 1)dx + \int_{\Gamma}(v\log v - v + 1)dS
						\end{equation*}
						and its entropy-dissipation
						\begin{equation*}
									\mathcal{D}(u,v) = d_u\int_{\Omega}\frac{|\nabla u|^2}{u}dx + d_u\int_{\Gamma}\frac{|\nabla_{\Gamma}v|^2}{v}dS + \int_{\Gamma}(u^{\alpha} - v^{\beta})\log{\frac{u^{\alpha}}{v^{\beta}}}dS.
						\end{equation*}
						The aim is to prove an EED estimate of the form
						\begin{equation}\label{vs-EED}
									\mathcal{D}(u,v) \geq \lambda(\mathcal{E}(u,v) - \mathcal{E}(u_{\infty}, v_{\infty})),
						\end{equation}
						for all $(u,v)$ satisfying the mass conservation $\beta\int_{\Omega}u(x)dx + \alpha\int_{\Gamma}v(x)dS = M$.
						\vskip .2cm
						\noindent The EED estimate \eqref{vs-EED} can be proved by applying the method proposed in Section \ref{sec:2} with few changes, e.g. the Poincar\'{e} inequality is replaced by the Trace inequality $\|\nabla f\|_{L^2(\Omega)}^2 \geq C_T\|f - \overline{f}\|_{L^2(\Gamma)}^2$. The interested reader is referred to \cite{BFL} for more details.

In fact, it's easy to verify that the results of \cite{BFL} are also applicable in the following network
\begin{equation*}
\alpha_1\mathcal U_1 + \ldots + \alpha_I\mathcal U_I \leftrightharpoons \beta_1\mathcal V_1 + \ldots + \beta_J\mathcal V_J
\end{equation*}
in the case that $\mathcal{U}_{i=1,\ldots, I}$ are volume-concentrations on $\Omega$ and $\mathcal{V}_{j=1,\ldots, J}$ are surface-concentrations on $\partial\Omega$.
						
			\subsection{Open problems}
There are many open problems related to the results of this paper. Here, we list the two questions we find the most interesting:
\begin{itemize}
\item [1.] How to choose the conservation laws in the general case? \\
As mentioned in the introduction, the conservation laws $\mathbb{Q}\, \overline{\cc} = \mathbf{M}$ depend on the choice of the matrix $\mathbb{Q}$, which has rows forming a basis of $\mathrm{ker}(W)$, where $W$ is the Wegscheider matrix. The choice of $\mathbb{Q}$ is not unique and in fact, there are infinitely many matrices like $\mathbb{Q}$. The question is: can we have a procedure or a method to choose such a matrix $\mathbb{Q}$ for general systems, which is suitable for our method and allows to explicitly complete the proof of \eqref{ODEEqui} and \eqref{FarEqui} in the general case?
									
									\item[2.] How to get optimal convergence rate?\\
We made it clear in this paper (see Remark \ref{rem:explicit}) that although we obtain an explicit bound for the convergence rate, the convergence rate in this work is non-optimal. The question of optimal convergence rate using the entropy method is highly involved and left for future investigation. 
\end{itemize}
			
\vskip 0.5cm
\noindent{\bf Acknowledgements.} The second author is supported by International Research Training Group IGDK 1754. This work has partially been supported by NAWI Graz. K.F.
acknowledges the ESI Thematic Programme on
Nonlinear Flows. The work in this paper has certainly benefited from the discussions at the ESI Vienna. 
Finally, the authors would like to thank the referees for helping to improve the article.

\end{document}